\newcommand{\thickhline}{%
    \noalign {\ifnum 0=`}\fi \hrule height 1.155pt
    \futurelet \reserved@a \@xhline
}
\newcolumntype{"}{@{\hskip\tabcolsep\vrule width 1.155pt\hskip\tabcolsep}}
\newcommand{\N}{\mathbb{N}}
\newcommand{\R}{\mathbb{R}}
\newcommand{\Vector}[1]{\ensuremath{\boldsymbol{#1}}}
\DeclareMathOperator{\conv}{conv}
\definecolor{cellone}{RGB}{204, 23, 217}
\definecolor{celltwo}{RGB}{0, 139, 0}
\definecolor{cellthree}{RGB}{0, 125, 188}
\definecolor{NiceBlue}{rgb}{0.2,0.2,0.75}
\newif\ifstartedinmathmode
\newcommand{\struc}[1]{{\relax\ifmmode\startedinmathmodetrue\else\startedinmathmodefalse\fi\color{NiceBlue}{\ifstartedinmathmode #1 \else\textit{#1}\fi}}}
\newtheorem{thm}{Theorem}[section]
\newtheorem{lemma}[thm]{Lemma}
\theoremstyle{definition}
\newtheorem{eg}[thm]{Example}
\begin{document}
\title{Empirically Exploring the Space of Monostationarity in Dual Phosphorylation\thanks{\scriptsize Supported by the MSRI-MPI Leipzig Summer Graduate School on Algebraic Methods for Biochemical Reaction Networks.}}
\author[1]{\fnm{May} \sur{Cai}}\email{ mcai@gatech.edu}
\equalcont{These authors contributed equally to this work.}

\author*[2]{\fnm{Matthias} \sur{Himmelmann}}\email{himmelmann1@uni-potsdam.de}
\equalcont{These authors contributed equally to this work.}

\author[3]{\fnm{Birte} \sur{Ostermann}}\email{birte.ostermann@tu-braunschweig.de}
\equalcont{These authors contributed equally to this work.}

\affil[1]{\orgdiv{School of Mathematics}, \orgname{Georgia Tech}, \orgaddress{\city{Atlanta}, \postcode{30332}, \state{Georgia}, \country{USA}}}
\affil*[2]{\orgdiv{Institute of Mathematics}, \orgname{University of Potsdam}, \orgaddress{\city{Potsdam}, \postcode{14476}, \country{Germany}}}
\affil[3]{\orgdiv{Institute for Analysis and Algebra}, \orgname{TU Braunschweig}, \orgaddress{\city{Braunschweig}, \postcode{38106}, \country{Germany}}}

\abstract{The dual phosphorylation network provides an essential component of intracellular signaling, affecting the expression of phenotypes and cell metabolism. For particular choices of kinetic parameters, this system exhibits multistationarity, a property that is relevant in the decision-making of cells. Determining which reaction rate constants correspond to monostationarity and which produce multistationarity is an open problem. The system's monostationarity is linked to the nonnegativity of a specific polynomial. A previous study by Feliu et al. provides a sufficient condition for monostationarity via a decomposition of this polynomial into nonnegative circuit polynomials. However, this decomposition is not unique. We extend their work by a systematic approach to classifying such decompositions in the dual phosphorylation network. Using this result classification, we provide a qualitative comparison of the decompositions into nonnegative circuit polynomials via empirical experiments and improve on previous conditions for the region of monostationarity.
} 

\keywords{chemical reaction networks, systems biology, monostationarity, circuit polynomials, empirical analysis}
\pacs[AMS Classification]{92C42, 14Q99, 92-08, 68W30}
\maketitle
\section*{Acknowledgements}
This work emerged from a group project during the MSRI-MPI Leipzig Summer Graduate School on Algebraic Methods for Biochemical Reaction Networks. We are grateful for the opportunity to participate in that inspiring research environment and thank Nicole Kitt as well as our other former group project members for insightful discussions. Special thanks to Elisenda Feliu and Timo de Wolff for proposing the project to us and for their support through various helpful comments and suggestions.
MC was partially supported by NSF-DMS grant \#1855726. MH was funded by the German Research Foundation (DFG) grant 195170736-TRR109. BO was partially funded by the German Federal Ministry for Economic Affairs and Climate Action (BMWK), project ProvideQ. 

\section{Introduction}
\label{section:introduction}
Chemical reaction networks (CRNs) provide a model for the behavior of chemical systems in which molecules react to create new species. A multitude of biological processes are represented using chemical reactions, for instance, cellular metabolism and gene regulation \cite{10.3389/fgene.2019.00549}. Under the assumption of mass action kinetics, CRNs are typically described using ordinary differential equations. Often, these systems have a stochastic component, though we restrict ourselves to deterministic models that are associated with differential algebraic equations. This assumption makes an analytical and algebraic investigation of these systems feasible. As early as 1972, mathematical techniques have been used to study the steady states of CRNs \cite{Horn1972, Feinberg1972}. A steady state refers to a composition of molecular species where the reactions balance each other and no changes in the concentrations of the reactants occur over time. 

CRNs can possess a single or multiple steady states depending on the specific reaction rates and reactants. The ability to achieve different steady states depending on external inputs is associated with a ``switch-like'' behavior. This is a driving factor in cellular decision-making \cite{LAURENT1999418, Ozbudak2004}. Characterizing the specific regions of parameter values in a given CRN where multiple steady states are attainable is generally a hard problem.

Alternatively, we can consider those parameter values where only a single steady state is admissible. Since we are only considering differential algebraic equations, the task of finding steady states can be turned into the algebraic problem of finding the positive real zeros of a polynomial system. Furthermore, the question of deciding on the \struc{monostationarity} of a CRN can be translated to the question of deciding whether a specific $n$-variate polynomial is nonnegative on $\mathbb{R}^n_{\geq 0}$. It is a fundamental yet NP-hard problem in algebraic geometry and polynomial optimization to determine the nonnegativity of a polynomial; see \cite{Laurent:Survey, Blekherman:Parrilo:Thomas, Lasserre:IntroductionPolynomialandSemiAlgebraicOptimization,Theobald:Book:RealAlgGeom}. Therefore, relaxations are typically considered to find certificates of nonnegativity which are sufficient criteria for effectively proving the polynomial's nonnegativity. A widely-used certificate is to show that a polynomial can be written as a sum of squares of polynomials (SOS), which is a property that can be tested computationally by semidefinite programming \cite{parrilo2000structured}. 
In order to certify the nonnegativity of the specific polynomial arising in the context of dual phosphorylation, we consider a different certificate, namely \struc{sums of nonnegative circuit polynomials (SONC)} which Iliman and de Wolff introduced \cite{SONCinitial}. 
Circuit polynomials generalize a result on agiforms by Reznick \cite{ReznickAGIforms} and were independently developed by Pantea, Koeppl and Craciun \cite{CRN_SONC_Pantea} in the context of biochemical reaction networks. 
In particular, SONC certificates allow us to derive symbolic bounds for the nonnegativity of the specific polynomial we are investigating in this article. 

\struc{Circuit polynomials} constitute a certain class of polynomials with a single negative coefficient. These polynomials each have an associated invariant called \struc{circuit number} that is easily computable via solving a linear system, even for symbolic coefficients. This invariant can be used to check the nonnegativity of a circuit polynomial \cite{SONCinitial}, and thus provides a useful tool for determining the multistationarity of CRNs.
If a polynomial can be decomposed into nonnegative circuit polynomials, it is nonnegative. We call such a decomposition a \struc{circuit cover}.

Determining the nonnegativity of a multivariate polynomial is a strategy for solving polynomial minimization problems by finding the largest $\lambda\in \mathbb{R}$ such that $g(\Vector{x})-\lambda\geq 0$ for $g\in \mathbb{R}[\Vector{x}]$ and all $\Vector{x}\in \mathbb{R}^n$. In particular, the decomposition of $g(\Vector{x})-\lambda$ into nonnegative circuit polynomials relaxes the original optimization problem and provides a lower bound for the minimum. 
There are various algorithms based on this approach (e.g. \cite{Iliman:deWolff:GP,dualityofnonnegativecircuits,Gennadiy,MAGRON2023346}). Besides their application in polynomial optimization, SONC certificates have, for instance, been used to find Lyapunov functions for solving dynamical systems \cite{Heuer:deWolff}. 

In this article, we investigate the \struc{dual phosphorylation} of proteins. This reaction is essential for the synthesis of molecules, the transport of free energy, cellular signaling and movements such as muscle contractions. It occurs in all forms of life, from amoebas and plants to humans \cite{cohen}. Dual phosphorylation is a cyclical reaction and is depicted in \Cref{fig:CRN}. 
The dual phosphorylation network poses an interesting challenge, as it is sufficiently large to make intuitive approaches infeasible, while it is small enough for a rigorous mathematical investigation. This statement is emphasized by several articles that have used mathematical tools to examine the dual phosphorylation network (e.g. \cite{bb6e59ad-64dd-32f7-8a45-283020c4203c,Flockerzi2014,ConradiFeliuMinchevaWiuf2017}). In particular, several algebraic sufficient conditions on the monostationarity of CRNs are available \cite{ConradiMincheva2014,Conradi2024,Feliu:Kaihnsa:Yueruek:deWolff:nSite,BIHAN2020367}. Many of these results rely on the theory on nonnegative polynomials, establishing a link between algebraic geometry and CRNs. 
Here, we are building on results from Feliu et al. \cite{DualPhosphorSONC} to empirically explore the region of monostationarity associated with the dual phosphorylation network using nonnegative circuit polynomials.
By systematically classifying circuit covers, we generate improved sufficient conditions for this chemical reaction network's monostationarity.

\begin{figure}[h!]
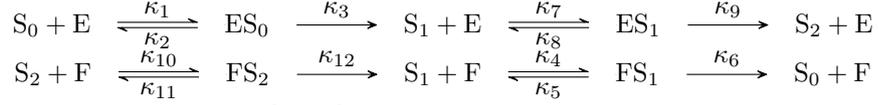

    \centering
    \schemestart
    \chemfig{@{a}S_0 + E} \arrow{<=>[$\kappa_1$][$\kappa_2$]}[0] \chemfig{@{b}ES_0}  \arrow{->[$\kappa_3$]}[0]
    \chemfig{@{c}S_1 + E} \arrow{<=>[$\kappa_7$][$\kappa_8$]}[0] 
    \chemfig{@{d}ES_1} \arrow{->[$\kappa_9$]}[0]
    \chemfig{@{e}S_2+E}
    \schemestop\\
    \schemestart
    \chemfig{@{f}S_2+F} \arrow{<=>[$\kappa_{10}$][$\kappa_{11}$]}[0] \chemfig{@{g}FS_2}  \arrow{->[$\kappa_{12}$]}[0]
    \chemfig{@{h}S_1 + F} \arrow{<=>[$\kappa_4$][$\kappa_5$]}[0] 
    \chemfig{@{i}FS_1} \arrow{->[$\kappa_6$]}[0]
    \chemfig{@{j}S_0+F}
    \schemestop
    \caption{The dual phosphorylation CRN \cite{WangSontag, ConradiMincheva2014}: The species $S_0, S_1, S_2$ denote the three phosphoforms of substrate $S$. The enzymes kinase and phosphatase are $E$ and $F$ respectively. The reaction rate constants are the parameters $\kappa_1,\dots, \kappa_{12} \in \mathbb{R}_{\geq 0}$.}
    \label{fig:CRN}
\end{figure}

Following Feliu et al. \cite{DualPhosphorSONC}, we first introduce the concept of a CRN in \Cref{section:CRN} and establish relevant results on nonnegative circuit polynomials in \Cref{section:SONC}. By combining these two topics, we investigate existing sufficient criteria on the monostationarity of the dual phosphorylation network in \Cref{section:dual-phosphorylation}. In \Cref{section:empirical data}, we build on this connection to produce novel results about the dual phosphorylation network's region of monostationarity. In particular, \Cref{prop:16-circuit-covers} presents the characterization of all pure circuit covers, in which each vertex of the polynomial's Newton polytope appears exactly once. This allows us to introduce an empirical approach in \Cref{section:experimental-setup} to compare these pure covers' quality. In \Cref{section:the-best-circuit-cover}, we ascertain which such pure cover corresponds to the largest region of monostationarity and investigate how these pure covers relate to each other in \Cref{section:containment}. These results allow us to state new sufficient conditions for the monostationarity of the dual phosphorylation network in \Cref{cor:newsufficientconditions}. Since pure circuit covers do not necessarily capture the entire region of nonnegativity, we also consider weighted covers in \Cref{sec:weighted} that combine multiple pure covers.

\section{Preliminaries}
\label{section:preliminaries}
Before we discuss our experimental setup and the results of our investigation, we introduce several concepts that relate the monostationarity of chemical reaction networks and the nonnegativity of polynomials. As our results extend the work of Feliu, Kaihnsa, de Wolff and Yürük \cite{DualPhosphorSONC}, we closely follow their notation and theoretical concepts.

\subsection{Chemical Reaction Networks}
\label{section:CRN}
A \struc{chemical reaction network (CRN)} consists of a set of \struc{species} and a set of \struc{reactions}, where each reaction is defined by its reactant and product species. In particular, we consider a network of reactions that transform chemical complexes between themselves. Chemical complexes are made up of individual species, and each reaction is governed by a reaction rate constant. See Dickenstein \cite{dickenstein2016biochemical} for a detailed survey. The paradigm of \struc{mass-action kinetics} assumes that a reaction rate is proportional to the occurrences of collisions between reactants. These quantities are related to the concentrations of the reactants. Thus we can consider the behavior of the reactions over time as a system of ordinary differential equations (ODEs) in the positive reals. Each ODE only involves polynomial terms with species concentrations as variables and the \struc{reaction rate parameters $\kappa_i \in \R_{> 0}$} appearing in the coefficients.

The specific CRN we consider is the \struc{dual phosphorylation network}, as depicted in Figure~\ref{fig:CRN}. The species $S_0,\, S_1$ and $S_2$ denote the three phosphoforms of substrate $S$. The enzymes kinase and phosphatase are denoted by $E$ and $F$, respectively.
We label the concentrations of the species by $x_1\,=\,E,\, x_2\,=\,F,\, x_3\,=\,S_0,\, x_4\,=\,S_1,\, x_5\,=\,S_2,\, x_6~=~ ES_0,\, x_7\,=\,FS_1,\, x_8\,=\,ES_1,\, x_9\,=\,FS_2$.
Mass-action kinetics predicts that the system of ODEs modeling the dual phosphorylation network (cf. \cite{DualPhosphorSONC}) is given by

\begin{align*}
\dv{x_1}{t} &= -\kappa_1x_1x_3 - \kappa_7x_1x_4 + \kappa_2x_6 + \kappa_3 x_7 + \kappa_8x_8 + \kappa_9x_8 & \dv{x_8}{t} &= \kappa_1x_1x_3 - \kappa_2x_6 - \kappa_3x_6\\
\dv{x_2}{t} &= -\kappa_4x_2x_4 - \kappa_{10}x_2x_5 + \kappa_5x_7 + \kappa_6x_7 + \kappa_{11}x_9 + \kappa_{12}x_9 & \dv{x_7}{t} &= \kappa_4x_2x_4 - \kappa_5x_7 - \kappa_6x_7\\
\dv{x_3}{t} &= -\kappa_1x_1x_3 + \kappa_2x_6 + \kappa_6x_7 & \dv{x_8}{t} &= \kappa_7x_1x_4 - \kappa_8x_8 - \kappa_9x_8\\
\dv{x_4}{t} &= -\kappa_4x_2x_4 - \kappa_7x_1x_4 + \kappa_3x_6 + \kappa_5x_7 \kappa_8x_8 + \kappa_{12}x_9 & \dv{x_9}{t} &= \kappa_{10}x_2x_5 - \kappa_{11}x_9 - \kappa_{12}x_9.\\
\dv{x_5}{t} &= -\kappa_{10}x_2x_5 + \kappa_9x_8 + \kappa_{11}x_9 &&
\end{align*}

The reaction network admits an additional three linear constraints coming from the conservation of the enzymes $E$ and $F$, and the total amount of the substrates $S_i$, giving rise to equations
\begin{align*}
    x_1 + x_6 + x_6 = E_{tot},\; & x_2 + x_7 + x_9 = F_{tot},\; x_3 + x_4 + x_5+ x_6 + x_7 + x_8 + x_9 = S_{tot}.
\end{align*}
Here, $E_{tot}$, $F_{tot}$, and $S_{tot}$ represent the total amount of kinase, phosphatase, and substrate present in the system. 
These twelve equations in total describe the predicted behavior of the system. 
We are interested in the \struc{steady states} of this system, where the concentrations of species are stable over time. Mathematically, these are the solutions to the reaction network system of ODEs. This gives us a variety generated by the nine polynomials and three linear constraints, and for specific parameters, $\kappa_1, \ldots, \kappa_{12}, E_{tot}, F_{tot}, S_{tot}$, we are interested in the number of positive steady-state solutions $(x_1, \ldots, x_9)$. There is always at least one, and there can be as many as three \cite[Thm. 4]{WangSontag}. We say that a system is \struc{multistationary} if it has multiple steady states. This property is interesting because it implies dynamic behavior in the underlying biological processes, and we are interested in the conditions under which it may or may not occur.

In particular, we say that a choice of reaction rate constants $(\kappa_1, \ldots, \kappa_{12}) \in \R^{12}_{> 0}$ \struc{enables} multistationarity when there exist choices of $E_{tot},\, F_{tot}$ and $S_{tot}$ so that the steady state system has at least two positive solutions. When this occurs, we call the system \struc{multistationary}. When there is only one positive solution, we say the vector $\kappa$ \struc{precludes} multistationarity, or enables \struc{monostationarity}.

\subsection{Sums of Nonnegative Circuit Polynomials}
\label{section:SONC}
The problem of determining the monostationarity of a CRN corresponds to deciding whether a particular polynomial attains a nonnegative value for each point in the positive orthant. The usual approach is to find \struc{certificates of nonnegativity}, which are sufficient criteria that are effective to find and that prove the polynomial's nonnegativity. Given the specific polynomial arising in the context of dual phosphorylation, we consider \struc{Sums of Nonnegative Circuit Polynomials (SONC)} developed by Iliman and de Wolff \cite{SONCinitial} (see also Reznick \cite{ReznickAGIforms} and Pantea et al. \cite{CRN_SONC_Pantea}) as a certificate of nonnegativity. The SONC approach allows one to derive sufficient conditions for the nonnegativity of the polynomial in the case of symbolic coefficients that emerge from the reaction rate parameters in the CRN.

In this section, we introduce the basic notation and terminology concerning SONC polynomials we use throughout the article.
We consider the polynomial ring of \struc{real $n$-variate polynomials} $\R[x_1,\dots, x_n]$ and abbreviate it as \struc{$\R[\Vector{x}]$}.
A real $n$-variate polynomial $p$ in $\Vector{x}:=(x_1,\dots,x_n)$ in the multi-index notation can be expressed as $p(\Vector{x}) = 
 \sum_{\Vector{\alpha}\in \mathbb{N}^n}c_{\Vector{\alpha}}
 \Vector{x}^{\Vector{\alpha}} \in \R[\Vector{x}]$ such that $c_{\Vector{\alpha}}=0$ for all but finitely many \struc{exponents} $\Vector{\alpha}\in \mathbb{N}^n$. A term $\Vector{x}^{\Vector{\alpha}}$ is called \struc{monomial}.
 For every exponent $\Vector{\alpha}$ there is a \struc{coefficient} $c_{\Vector{\alpha}} \in \R$. 
 We consider the finite set $\struc{\text{supp}(p)} = \{\Vector{\alpha} \in \mathbb{N}^n \text { with } c_{\Vector{\alpha}} \neq 0\}$ and call it the \struc{support} of $p$. The \struc{convex hull} of a finite set $A \subset \R^n$ is the set of all convex combinations of the elements. We denote it as $\struc{\conv({A})} := \{\sum_{i=1}^{|A|}\lambda_i \Vector{\alpha_i}\text{ with }0\leq\lambda_i\leq1,\sum_{i=1}^{|A|}\lambda_i=1,\;\Vector{\alpha_i} \in A\text{ for } 1\leq i\leq |A|\}$. 
 The \struc{Newton polytope} $\mathcal{N}(p) \subset \N^n$ of polynomial $p$ in $n$ variables is the convex hull of the support $\text{conv}(\text{supp}(p))$.
A polynomial $p(\Vector{x})$ is \struc{nonnegative} if $p(\Vector{x}) \geq 0$ for all $x\in \mathbb{R}^n$.
In this article, we are only interested in the case if $p(\Vector{x})\geq 0$ for $x\in \mathbb{R}_{>0}^n$ on the positive orthant.

We now define \struc{circuit polynomials} that are crucial in this article and Feliu et al. \cite{DualPhosphorSONC} for certifying the nonnegativity of a polynomial via a \struc{decomposition} into \struc{sums of nonnegative circuit polynomials (SONC)} \cite{SONCinitial} (see also \cite{ReznickAGIforms, CRN_SONC_Pantea}). Consider a polynomial $p \in \mathbb{R}[\Vector{x}]$ of the form
\begin{align*}
    p(\Vector{x}) = c_{\Vector{\beta}}\Vector{x}^{\Vector{\beta}} + \sum_{i = 0}^r c_{\Vector{\alpha(i)}}\Vector{x}^{\Vector{\alpha(i)}},
\end{align*}
with $r\leq n$, coefficients $c_{\Vector{\alpha(i)}}\in \mathbb{R}_{>0}$ and $c_{\Vector{\beta}} \in \mathbb{R}$, and exponents $\Vector{\alpha(i)},\;\Vector{\beta} \in \mathbb{N}^n$.
The polynomial $p$ is a circuit polynomial if the Newton polytope $\mathcal{N}(p)$ is a simplex with vertices $\Vector{\alpha(0)},\dots,\Vector{\alpha(r)}$ and $\Vector{\beta}$ is contained in its relative interior. Note that this definition differs slightly from the original definition in Iliman and de Wolff \cite{SONCinitial} where all vertices are required to be even, i.e.\ $\Vector{\alpha}(i) \subseteq (2\mathbb{N})^n$ for all $\; 0\leq i \leq r$. It is a necessary condition for a polynomial $p(\Vector{x})$ to be nonnegative for $\Vector{x}$ in $\mathbb{R}^n$ that the vertices of $\mathcal{N}(p)$ are even \cite[p.365]{ReznickPSDForms}. Here, because we restrict $\Vector{x}$ to $\mathbb{R}_{>0}^n$, we follow the definition from Feliu et al. \cite{DualPhosphorSONC} and allow uneven vertices $\Vector{\alpha}(i) \subseteq \mathbb{N}^n$ for all $\; 0\leq i \leq r$.

For an intuition on circuit polynomials we give an example:
\begin{eg}
\label{exampleSONC}
    Consider $p(x,y) = x^4y^2 + x^2 + y - cx^2y$, with one variable coefficient $c \in \R,\; c\neq 0$.
    The support $\text{supp}(p)$ is the set $\{\Vector{\alpha(0)} = (4,2),\, \Vector{\alpha(1)} = (2,0),\, \Vector{\alpha(2)} = (0,1),\, \Vector{\beta} = (2,1)\}$ and the Newton polytope $\mathcal{N}(p)$ is a simplex, see Figure \ref{Fig:Ex:NewtonPolytope}. We observe that $p$ is a circuit polynomial, as the only exponent that potentially has a negative coefficient $-c$ is in the interior of $\mathcal{N}(p)$.
\end{eg}
\begin{figure}[h!]
\begin{center}
\includegraphics[width=.31\linewidth]{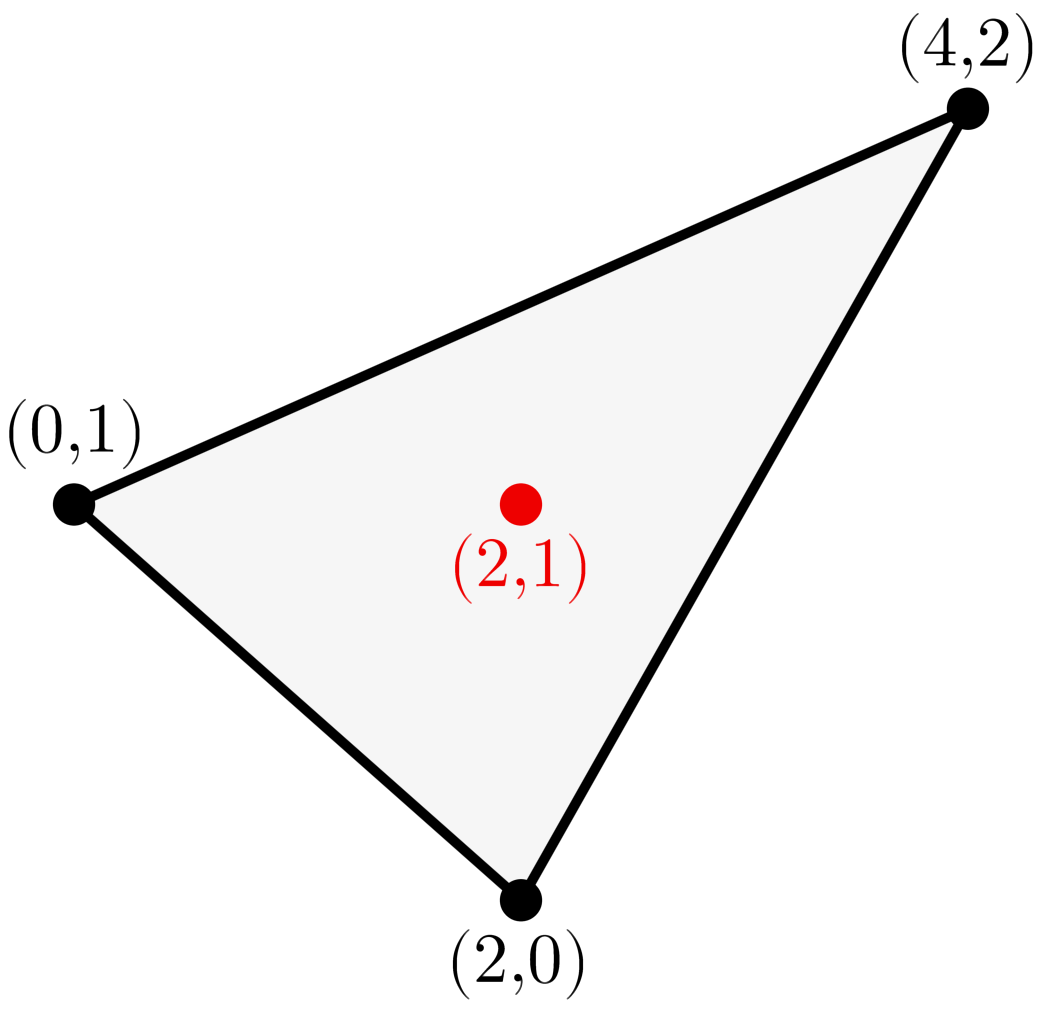}
\end{center}
\caption{Newton polytope of the circuit polynomial defined in Example \ref{exampleSONC}. 
}\label{Fig:Ex:NewtonPolytope}
\end{figure}
Every circuit polynomial $p$ has an associated \struc{circuit number} $\struc{\Theta_p}$ that allows to determine the nonnegativity of $p$. 
The circuit number is defined as
\begin{equation}
\label{eq:circuit-number}
\Theta_p = \prod_{i=0}^{r}\Bigg(\frac{c_{\Vector{\alpha(i)}}}{\lambda_i}\Bigg)^{\lambda_{i}}
\end{equation}
with $\lambda_0,\dots,\lambda_r$ denoting the \struc{barycentric coordinates} of $\Vector{\beta}$ with respect to the vertices $\mbox{\Vector{\alpha(0)},\dots,$ $\Vector{\alpha(r)}}$. 
In other words, $\Vector{\beta} = \sum_{i=0}^r \lambda_i\Vector{\alpha(i)}$, with $\sum_{i=0}^r \lambda_i = 1$ and $0 < \lambda_i < 1$ for $i=0,\dots, r$.

The following theorem provides a criterion to check the nonnegativity of a circuit polynomial based on its circuit number. Again, due to restricting to the positive orthant, we slightly adjust the original version in Iliman and de Wolff \cite[Thm. 3.8]{SONCinitial} and follow Feliu et al. \cite[Thm. 2.8]{DualPhosphorSONC}:
\begin{thm}{\cite[Thm. 2.8]{DualPhosphorSONC}}
\label{theorem_circuit_number}
A circuit polynomial $p(\Vector{x})~=~ c_{\Vector{\beta}}\Vector{x}^{\Vector{\beta}}~+~ \sum_{i = 0}^r c_{\Vector{\alpha(i)}}\Vector{x}^{\Vector{\alpha(i)}}$ on the positive orthant is nonnegative if and only if
\begin{align}
    -c_{\Vector{\beta}} \leq \Theta_p.
\end{align}
\end{thm}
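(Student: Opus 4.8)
The plan is to derive both implications from a single application of the weighted arithmetic--geometric mean (AM--GM) inequality, exploiting that $\Vector{\beta}$ lies in the relative interior of the simplex $\mathcal{N}(p)$: this is precisely what guarantees that its barycentric coordinates $\lambda_0,\dots,\lambda_r$ are strictly positive and sum to one, which is exactly the data needed to use them as AM--GM weights. First I would record the core estimate. For any $\Vector{x}\in\R_{>0}^n$, set $a_i := \tfrac{c_{\Vector{\alpha(i)}}}{\lambda_i}\Vector{x}^{\Vector{\alpha(i)}} > 0$ and apply weighted AM--GM with weights $\lambda_i$:
\begin{align*}
\sum_{i=0}^r c_{\Vector{\alpha(i)}}\Vector{x}^{\Vector{\alpha(i)}}
= \sum_{i=0}^r \lambda_i a_i
\;\geq\; \prod_{i=0}^r a_i^{\lambda_i}
= \Bigg(\prod_{i=0}^r\Big(\tfrac{c_{\Vector{\alpha(i)}}}{\lambda_i}\Big)^{\lambda_i}\Bigg)\,\Vector{x}^{\sum_i \lambda_i\Vector{\alpha(i)}}
= \Theta_p\,\Vector{x}^{\Vector{\beta}},
\end{align*}
where the final equality uses $\Vector{\beta}=\sum_i\lambda_i\Vector{\alpha(i)}$. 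Hence $p(\Vector{x}) \geq (\Theta_p + c_{\Vector{\beta}})\,\Vector{x}^{\Vector{\beta}}$ on the positive orthant. The ``if'' direction is then immediate: if $-c_{\Vector{\beta}}\leq\Theta_p$, i.e.\ $\Theta_p + c_{\Vector{\beta}}\geq 0$, then since $\Vector{x}^{\Vector{\beta}}>0$ the estimate yields $p(\Vector{x})\geq 0$ for all $\Vector{x}\in\R_{>0}^n$.

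For the ``only if'' direction I assume $p\geq 0$ on $\R_{>0}^n$ and produce a witness point where the AM--GM estimate is tight, so that the inequality collapses into the desired bound. Equality in weighted AM--GM holds exactly when all the $a_i$ coincide, i.e.\ when $\tfrac{c_{\Vector{\alpha(i)}}}{\lambda_i}\Vector{x}^{\Vector{\alpha(i)}}$ is independent of $i$. Passing to logarithmic coordinates $y_k=\log x_k$, these conditions become the linear system $\langle \Vector{\alpha(i)}-\Vector{\alpha(0)},\,\Vector{y}\rangle = \log\big(\tfrac{c_{\Vector{\alpha(0)}}\,\lambda_i}{\lambda_0\,c_{\Vector{\alpha(i)}}}\big)$ for $i=1,\dots,r$. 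Because $\Vector{\alpha(0)},\dots,\Vector{\alpha(r)}$ are the vertices of a simplex, the vectors $\Vector{\alpha(i)}-\Vector{\alpha(0)}$ are linearly independent, so the system has full row rank and admits a solution $\Vector{y}\in\R^n$; exponentiating gives a point $\Vector{x}^\ast\in\R_{>0}^n$ at which the core estimate is an equality. Evaluating there, $0\leq p(\Vector{x}^\ast) = (\Theta_p+c_{\Vector{\beta}})\,(\Vector{x}^\ast)^{\Vector{\beta}}$, and dividing by $(\Vector{x}^\ast)^{\Vector{\beta}}>0$ yields $-c_{\Vector{\beta}}\leq\Theta_p$.

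The AM--GM computation is routine; the step I expect to be the main obstacle --- and the only place the simplex hypothesis is genuinely used --- is exhibiting the equality point $\Vector{x}^\ast$ inside the positive orthant. The crux is that affine independence of the vertices makes the logarithmic system consistent, while strict positivity of the $\lambda_i$ (guaranteed by $\Vector{\beta}$ being interior) is what legitimizes the weighted AM--GM in the first place. Finally I would dispatch the degenerate case $c_{\Vector{\beta}}\geq 0$ separately: there both the nonnegativity of $p$ (which already follows from the core estimate) and the inequality $-c_{\Vector{\beta}}\leq 0 < \Theta_p$ are automatic, since $\Theta_p$ is a product of strictly positive factors.
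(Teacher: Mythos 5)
Your argument is correct: the weighted AM--GM estimate gives the ``if'' direction, and the solvability of the logarithmic linear system (using the affine independence of the simplex vertices and the strict positivity of the barycentric coordinates) produces the equality point needed for ``only if''. The paper itself states this theorem as a quoted result from Feliu et al.\ (ultimately Iliman--de Wolff) without reproving it, and your AM--GM argument is precisely the standard proof of the positive-orthant version of that result, so there is nothing to fault and no genuinely different route to compare against.
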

Returning to Example \ref{exampleSONC}, we can easily compute the circuit number $\Theta_p$ of $p$ by solving a system of linear equations for the barycentric coordinates: $\frac{1}{3}\Vector{\alpha(0)}+\frac{1}{3}\Vector{\alpha(1)}+\frac{1}{3}\Vector{\alpha(2)} = \Vector{\beta}$. Thus, we have $\Theta_p = (\frac{1}{\nicefrac{1}{3}})^{\nicefrac{1}{3}}(\frac{1}{\nicefrac{1}{3}})^{\nicefrac{1}{3}}(\frac{1}{\nicefrac{1}{3}})^{\nicefrac{1}{3}} = 3$. Using Theorem \ref{theorem_circuit_number}, we can now determine that the circuit polynomial $p$ is nonnegative if and only if $c \leq 3$.
This example suggests a potential approach for certifying the nonnegativity of polynomials with partially symbolic coefficients. Using the circuit number and Theorem \ref{theorem_circuit_number}, one can derive inequalities that symbolically characterize the nonnegativity of a circuit polynomial.
 We thus aim to show the nonnegativity of a general polynomial $f$ by finding a decomposition of $f$ into a sum of nonnegative circuit polynomials, i.e.\ finding a SONC decomposition of $f$. 
Note that the circuit polynomial from Example \ref{exampleSONC} has the same support as one of the circuit polynomials used in the following sections.

Following and extending the approach by Feliu et al. \cite{DualPhosphorSONC}, in this work we use the SONC polynomials and their circuit numbers as initially introduced by Iliman and de Wolff \cite{SONCinitial} to derive sufficient conditions for nonnegativity in the context of dual phosphorylation via decomposing a given polynomial into SONC. Alongside this perspective on SONC polynomials, numerous other works have advanced both the theoretical and computational aspects of polynomial optimization:
In the context of \struc{signomials} or \struc{exponential sums}, i.e. functions $f(\Vector{x}) = \sum_{\Vector{\alpha}\in A}c_{\Vector{\alpha}}e^{\langle \Vector{x},{\Vector{\alpha}}\rangle}$
with $\langle\, \text{--},\text{--}\,\rangle$ denoting the standard inner product, $\Vector{x} \in \mathbb{R}^n$ and the finite support $A\subset \mathbb{R}^n$, Chandrasekaran and Shah \cite{Chandrasekaran:Shah:SAGE} have introduced a certificate of nonnegativity for signomials via \struc{sums of arithmetic and geometric mean exponentials (SAGE)}.
The general idea is to decompose them into \struc{AGE functions} that only contain one term with a negative coefficient and are nonnegative by the arithmetic-geometric-mean inequality.
 Wang \cite{Wang} has shown for polynomials, i.e. $A\subset \mathbb{N}^n$, that if we consider the same support sets $A$ then there exists a SONC decomposition if and only if there exists a SAGE decomposition. Chandrasekaran, Murray and Wiermann \cite{Chandrasekaran:Murray:Wiermann} proved this result for signomials.

In practice, the SONC and SAGE decompositions can be used in polynomial optimization to compute a lower bound to a minimum of a polynomial.
It is a priori not clear how to choose the circuit polynomials involved in the decomposition to obtain the optimal lower bound, especially in the case of symbolic coefficients.
In the nonparametrized case there exists algorithms to compute the decompositions:
In the SAGE approach, finding a SAGE decomposition can be via \struc{relative entropy programming (REP)} providing an optimal lower bound \cite{Chandrasekaran:Shah:SAGE, Chandrasekaran:Murray:Wiermann}. 
Dressler, Iliman and de Wolff developed \struc{geometric programming} \cite{Iliman:deWolff:GP, constrainedoptimizationapproach} and REP \cite{Dressler:Iliman:deWolff:Positivstellensatz} approaches to solve polynomial optimization problems via SONC decompositions. 
While this approach gives a lower bound on the minimal value of a polynomial, one has to make prior assumptions about the involved circuit polynomials. 
Papp \cite{dualityofnonnegativecircuits} proposes an algorithm via column generation to choose a the circuit polynomials for an optimal decomposition.
The theoretical description by Averkov \cite{Gennadiy} as well as the algorithmic approach by Magron and Wang \cite{MAGRON2023346} address SONC decompositions via \struc{Second Order Cone Programming (SOCP)}.

\subsection{Monostationarity and Circuit Polynomials}
\label{section:dual-phosphorylation}
In what follows, we briefly summarize the preliminaries and key aspects from Feliu et al. \cite[§3.2]{DualPhosphorSONC} in which the authors classify the region of monostationarity in dual phosphorylation via circuit polynomials. In the upcoming sections, we extend their approach empirically.

Prior to Feliu et al. \cite{DualPhosphorSONC}, conditions on the multistationarity of dual phosphorylation were given by Conradi and Mincheva et al. \cite{ConradiMincheva2014, ConradiFeliuMinchevaWiuf2017}. Recall the reaction rate constants of the CRN in Figure \ref{fig:CRN} are given as the parameters \begin{align} (\kappa_1, \kappa_2, \kappa_3, \kappa_4, \kappa_5, \kappa_6, \kappa_7, \kappa_8, \kappa_9, \kappa_{10}, \kappa_{11}, \kappa_{12}) =: \Vector{\kappa} \in \mathbb{R}_{>0}^{12}. 
\end{align} Additionally, we consider the \struc{Michaelis-Menten constants}
    \begin{align}K_1 = \frac{\kappa_2+\kappa_3}{\kappa_1}, && K_2 = \frac{\kappa_5+\kappa_6}{\kappa_4}, && K_3 = \frac{\kappa_8 + \kappa_9}{\kappa_7}, && K_4 = \frac{\kappa_{11}+\kappa_{12}}{\kappa_{10}} \in \mathbb{R}_{>0} \label{eq:Michaelis-Menten}
    \end{align}
to define the vector 
\begin{align}
    \Vector{\eta} := (K_1, K_2, K_3, K_4, \kappa_3, \kappa_6, \kappa_9, \kappa_{12}) \in \mathbb{R}_{>0}^{8} \label{eq:eta}
\end{align}
as the \struc{image $\pi({\Vector{\kappa}})$} of a surjective and continuous map $\pi: \mathbb{R}_{>0}^{12} \rightarrow \mathbb{R}_{>0}^8$.
See Conradi et al. or Yürük \cite{ConradiFeliuMinchevaWiuf2017, dissYuruk} for a detailed description how to derive the the following polynomial $p_{\Vector{\eta}}(\Vector{x})$ given in \eqref{eq:polynomial_eta} with variables $\Vector{x}=(x_1,x_2,x_3)$ and coefficients based on $\Vector{\eta}$ from the CRN. 
We have:
\begin{align}
\begin{split}
    p_{\Vector{\eta}}(\Vector{x}) &= K_2\kappa_3(\struc{\kappa_3\kappa_{12} - \kappa_6\kappa_9})\Big(K_2K_4\kappa_3\kappa_9x_1^4x_3^2 + K_1K_3\kappa_6\kappa_{12}(x_1^3x_2^2x_3 + x_1^2x_2^3x_3+ x_1^2x_2^2x_3^2)\\
    &+K_2K_3\kappa_3\kappa_{12}x_1^3x_2x_3^2\Big) + K_1K_2K_3\kappa_3\kappa_6\kappa_{12}\Big(\struc{(K_2+K_3)\kappa_3\kappa_{12} - (K_1+K_4)\kappa_6\kappa_9}\Big)x_1^2x_2^2x_3\\
    &+K_1\kappa_6\Big(K_2^2K_4\kappa_3^2\kappa_9^2x_1^4x_3 + 2K_2K_3K_4\kappa_3^2\kappa_9\kappa_{12}x_1^3x_2x_3 + K_1K_2K_3\kappa_3\kappa_6\kappa_{12}(\kappa_9 + \kappa_{12})x_1^2x_2^3\\
    &+K_1K_2K_3K_4\kappa_3\kappa_6\kappa_9\kappa_{12}x_1^2x_2^2 + K_1K_3^2\kappa_6\kappa_{12}^2(\kappa_3 + \kappa_6)x_1x_2^4 + 2K_1K_2K_3\kappa_3\kappa_6\kappa_{12}^2x_1x_2^3x_3\\
    &+ K_1K_2K_3^2\kappa_3\kappa_6\kappa_{12}^2x_1x_2^3 + K_1K_3^2\kappa_6^2\kappa_{12}^2x_2^4x_3 + K_1^2K_3^2\kappa_6^2\kappa_{12}^2x_2^4\Big).\label{eq:polynomial_eta}
\end{split}
\end{align}

The following Lemma \ref{lem:mono_nonnegativity} (cf. \cite[Prop. 2.2]{DualPhosphorSONC}) gives a criterion to determine the monostationarity of the CRN based on the signs of values that the polynomial $p_{\Vector{\eta}}(\Vector{x})$ attains. For a fixed set of parameters in $\Vector{\eta}$ as given in \eqref{eq:eta}, the following holds:
\begin{lemma}[{\cite[Prop. 2.2]{DualPhosphorSONC}}, {\cite{ConradiMincheva2014, ConradiFeliuMinchevaWiuf2017}}]
\label{lem:mono_nonnegativity}
Consider the polynomial $p_{\Vector{\eta}}(\Vector{x})$ from \eqref{eq:polynomial_eta}:
    If $p_{\Vector{\eta}}(\Vector{x}) > 0$ for all $x_1, x_2, x_3 > 0$, then any set of parameters $\Vector{\kappa}$ of the preimage $\pi^{-1}({\Vector{\eta})}$ does not enable multistationarity, i.e.\ the CRN is monostationary.
\end{lemma}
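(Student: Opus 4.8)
The plan is to reduce the multistationarity question to an injectivity question, and then to certify injectivity through the sign of a Jacobian determinant, identifying $p_{\Vector{\eta}}$ as a strictly positive multiple of the numerator of that determinant. First I would exploit the layered structure of the network in Figure~\ref{fig:CRN}: at steady state the equations for the intermediate complexes $ES_0, ES_1, FS_1, FS_2$ (the variables $x_6,\dots,x_9$) express each intermediate as a positive rational monomial in the free enzyme and substrate concentrations, with coefficients that depend on $\Vector{\kappa}$ only through the Michaelis--Menten constants $K_1,\dots,K_4$. Substituting these back, the whole positive steady-state variety admits a positive rational parametrization by three coordinates, which I identify with $\Vector{x}=(x_1,x_2,x_3)$. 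The essential observation is that this parametrization, and everything derived from it, depends on $\Vector{\kappa}$ solely through the eight quantities collected in $\Vector{\eta}$; this is exactly why the conclusion can be stated for the entire fiber $\pi^{-1}(\Vector{\eta})$ rather than for a single $\Vector{\kappa}$.

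Next I would phrase monostationarity as injectivity. Fixing $\Vector{\eta}$, a steady state in a given stoichiometric compatibility class is a positive point mapping to a prescribed triple of conserved totals $(E_{tot},F_{tot},S_{tot})$, so the number of steady states in that class equals the number of positive preimages of the conservation map $\Phi\colon \R^3_{>0}\to\R^3_{>0}$ obtained by composing the parametrization with the three conservation laws. I would then invoke the standard injectivity criterion for such parametrized steady-state maps (Conradi--Mincheva \cite{ConradiMincheva2014}, Conradi--Feliu--Mincheva--Wiuf \cite{ConradiFeliuMinchevaWiuf2017}): if $\det D\Phi$ is nowhere zero and of constant sign on the convex domain $\R^3_{>0}$, then $\Phi$ is injective there, so each compatibility class contains at most one steady state. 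Carrying out the elimination shows that, after clearing the positive monomial denominators introduced by the parametrization, the numerator of $\det D\Phi$ equals $p_{\Vector{\eta}}(\Vector{x})$ up to a strictly positive factor; this lengthy symbolic computation is precisely the derivation recorded in \cite{ConradiFeliuMinchevaWiuf2017, dissYuruk}, which I would quote rather than reproduce.

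Finally I would conclude as follows: if $p_{\Vector{\eta}}(\Vector{x})>0$ for all $x_1,x_2,x_3>0$, then $\det D\Phi$ is sign-definite and nonvanishing on $\R^3_{>0}$, hence $\Phi$ is injective and every compatibility class contains at most one positive steady state. Since the network always admits at least one positive steady state \cite[Thm. 4]{WangSontag}, each class then contains exactly one, so no choice of totals yields two distinct solutions; that is, no $\Vector{\kappa}\in\pi^{-1}(\Vector{\eta})$ enables multistationarity, and the CRN is monostationary. I expect the main obstacle to be the passage from the pointwise Jacobian sign condition to \emph{global} injectivity of $\Phi$: this needs the domain to be convex and the determinant to be genuinely nonvanishing rather than merely nonnegative, which is why the hypothesis is the strict inequality $p_{\Vector{\eta}}>0$, and it relies on the degree-theoretic injectivity results for chemical reaction networks rather than on an elementary argument. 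A secondary technical point, which I would defer to the cited references, is verifying that the chosen parametrization captures \emph{all} positive steady states and that the identification of the numerator of $\det D\Phi$ with $p_{\Vector{\eta}}$ is exact.
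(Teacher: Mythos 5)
This lemma is not proved in the paper at all: it is imported verbatim as \cite[Prop.~2.2]{DualPhosphorSONC} (building on \cite{ConradiMincheva2014, ConradiFeliuMinchevaWiuf2017}), so there is no in-paper argument to compare against. Your sketch is a faithful reconstruction of what those references actually do: positively parametrize the steady-state variety by $(x_1,x_2,x_3)$ after eliminating the intermediates, observe that the result depends on $\Vector{\kappa}$ only through $\Vector{\eta}$ (which is exactly why the conclusion holds on the whole fiber $\pi^{-1}(\Vector{\eta})$), and identify $p_{\Vector{\eta}}$ with the numerator of the relevant Jacobian determinant up to a positive factor. One point deserves care, though: the step ``$\det D\Phi$ nowhere zero and of constant sign on a convex domain $\Rightarrow$ $\Phi$ injective'' is not a valid general principle in dimension $\geq 2$, so as literally stated that implication is a gap. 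The cited works do not argue via injectivity but via Brouwer degree: the degree of the steady-state/conservation-law map on a stoichiometric compatibility class is $(-1)^s$, each nondegenerate positive steady state contributes the sign of the Jacobian determinant there, and a sign-definite determinant of the correct sign therefore forces exactly one steady state. Since you explicitly defer to ``the degree-theoretic injectivity results for chemical reaction networks'' for precisely this step, your proposal is acceptable as a citation-level argument, but the injectivity phrasing should be replaced by the degree-counting statement to be strictly correct.
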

As shown in \cite[Prop 2.11]{DualPhosphorSONC}, in the setting we consider, the polynomial $p_{\Vector{\eta}}(\Vector{x})$ is either positive 
or attains negative values. 
That means if we find the parameters in $\Vector{\eta}$ such that $p_{\Vector{\eta}}(\Vector{x}) \geq 0$ for all $x_1,x_2,x_3 \in \mathbb{R}_{>0}$, the CRN is monostationary by Lemma \ref{lem:mono_nonnegativity}.
Conversely, if the polynomial attains negative values, multistationarity is enabled, see \cite[Prop. 2.2]{DualPhosphorSONC} for a detailed version. 

Importantly, Lemma \ref{lem:mono_nonnegativity} translates deciding the monostationarity of dual phosphorylation to deciding nonnegativity of the polynomial $p_{\Vector{\eta}}(\Vector{x})$.
We now tackle the question of nonnegativity of $p_{\Vector{\eta}}(\Vector{x})$ by using SONC decompositions that we introduced in Section \ref{section:SONC}.

Recall the definition of $p_{\Vector{\eta}}(\Vector{x})$ in \eqref{eq:polynomial_eta}.
Since we only consider $x_1, x_2, x_3$ in $\mathbb{R}_{>0}$, we find that $p_{\Vector{\eta}}(\Vector{x})$ cannot attain negative values if all coefficients of $p_{\Vector{\eta}}(\Vector{x})$ are nonnegative. 
Due to the positivity of all parameters in $\Vector{\eta}$, there are in total only six monomials of $p_{\Vector{\eta}}(\Vector{x})$ that can possibly have a negative coefficient. Those are determined by multiples of either one of the functions 
\begin{align}
\label{eq:def-a-and-b}
    \struc{a(\Vector{\eta}):= \kappa_3\kappa_{12} - \kappa_6\kappa_9} &&
    \struc{b(\Vector{\eta}):= (K_2 + K_3)\kappa_3\kappa_{12} - (K_1 + K_4)\kappa_6\kappa_9}.
\end{align}

Therefore, we now list all possible cases that determine the signs which $a(\Vector{\eta})$ and $b(\Vector{\eta})$ can attain. While cases 1) and 2) have already been discussed by Conradi and Mincheva \cite{ConradiMincheva2014}, all cases are considered by Feliu et al. \cite{DualPhosphorSONC}:
\begin{enumerate}[label={\arabic*)}]
    \item $a(\Vector{\eta}) \geq 0$ and $b(\Vector{\eta})\geq 0$: All coefficients of $p_{\Vector{\eta}}(\Vector{x})$ are nonnegative, thus the CRN is monostationary by Lemma \ref{lem:mono_nonnegativity}.
    \item $a(\Vector{\eta}) < 0$: The CRN enables multistationarity, cf. \cite[Prop.~2.11]{DualPhosphorSONC}.
    \item $a(\Vector{\eta})= 0$ and $b(\Vector{\eta}) < 0$: This case is the subject of \cite[Thm. 3.1(ii)]{DualPhosphorSONC}, where the authors give a necessary and sufficient condition for when multistationarity is enabled.
    \item $a(\Vector{\eta}) > 0$ and $b(\Vector{\eta}) < 0$: This case is the subject of \cite[Thm. 3.5]{DualPhosphorSONC}, the authors give a sufficient condition for monotationarity through the use of SONC certificates.
\end{enumerate}
We take a closer look at case 4. The empirical analysis that we perform in \Cref{section:empirical data} along with the remaining upcoming sections focus on this case.
Hence, from now on we assume that {$a(\Vector{\eta})>0$} and {$b(\Vector{\eta})<0$}. We note that the unique monomial corresponding to the coefficient which is a multiple of $b(\Vector{\eta})$ is $x_1^2x_2^2x_3$.
The exponent vector of this monomial is $(2,2,1)$.
Now let $H$ be the face of the Newton polytope $\mathcal{N}(p_{\Vector{\eta}})$ of $p_{\Vector{\eta}}(\Vector{x})$ that contains $(2,2,1)$, see Figure \ref{fig:Newton-polytope}. This two-dimensional face is a hexagon whose six vertices correspond to exponent vectors with positive coefficients. Among the points that lie in the interior of $H$, only $(2,2,1)$ has a negative coefficient.

\begin{figure}[h!]
    \centering
    \includegraphics[width=0.48\linewidth]{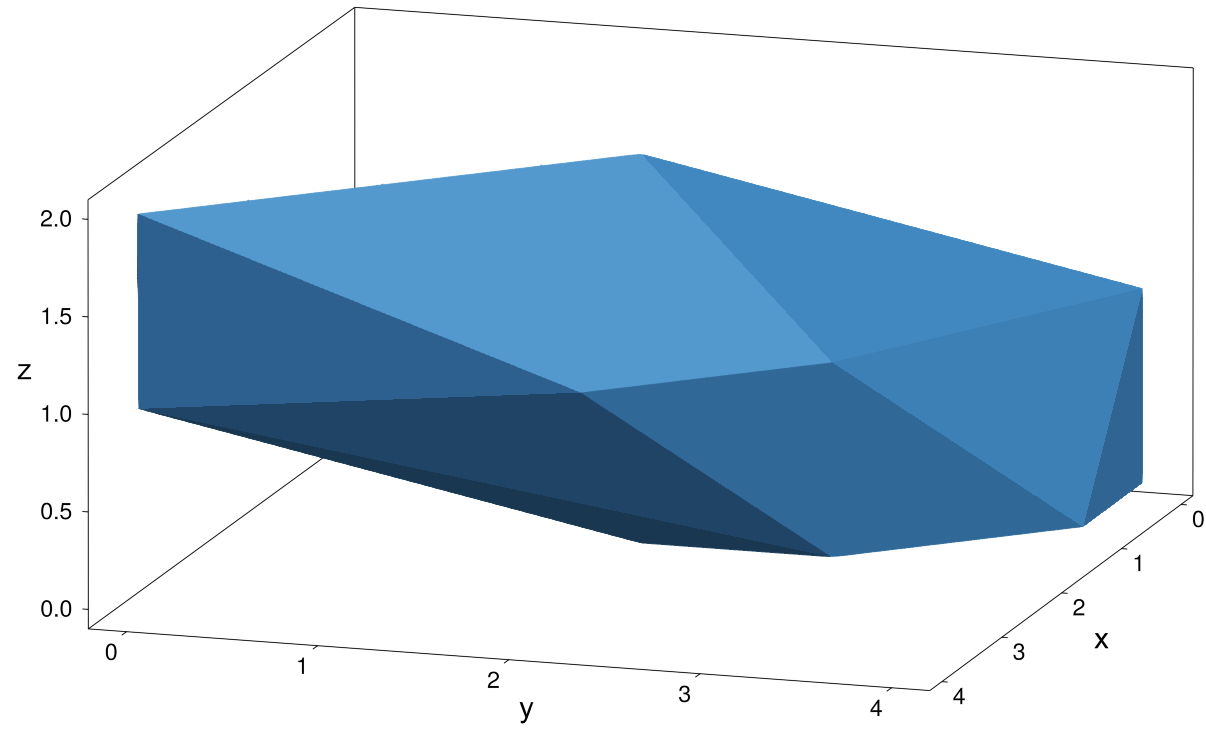}
    \hfill
        \includegraphics[width=0.48\linewidth]{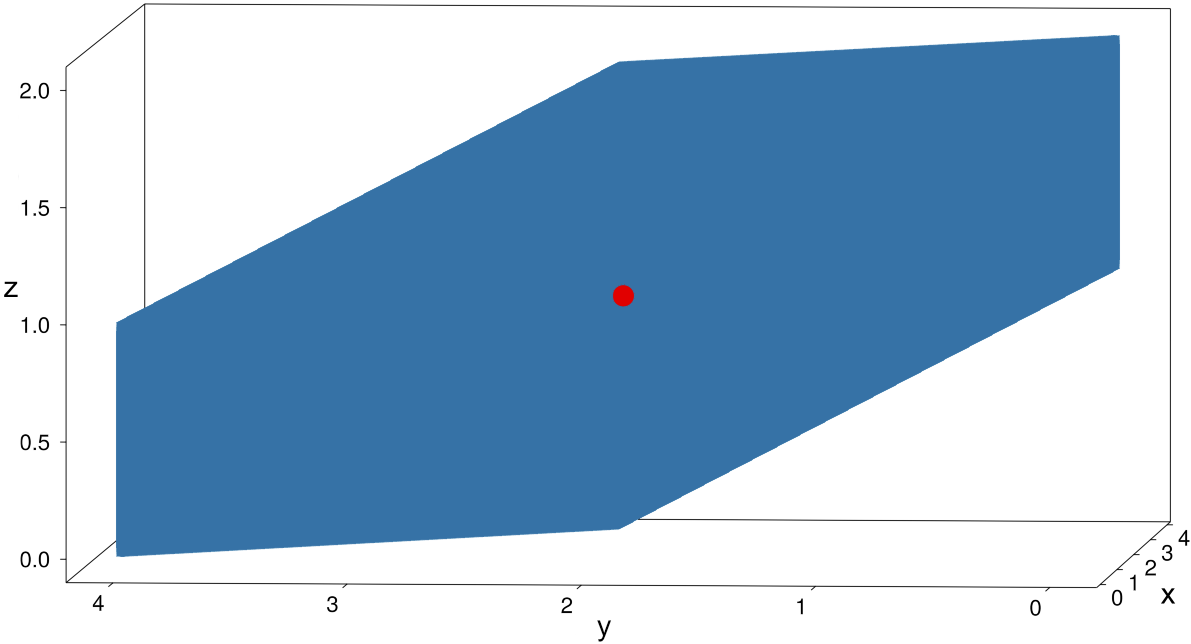}
    \caption{ The Newton polytope with its flat, hexagonal side (right) from two perspectives. 
    The negative point, which potentially leads to the polynomial attaining negative values, is marked in red.}
    \label{fig:Newton-polytope}
\end{figure}

By \cite[Prop. 2.11]{DualPhosphorSONC}, given that $a(\Vector{\eta})\geq 0$, the system is monostationary if and only if the polynomial that is supported on $H$ is nonnegative over $\mathbb{R}_{>0}^3$.
This means, in order to determine monostationarity of the system, we may restrict $p_{\Vector{\eta}}(\Vector{x})$ to the monomials with exponents whose exponent vectors belong to $H$. Denote by $p_{\Vector{\eta}, H}(\Vector{x})$ this restriction of $p_{\Vector{\eta}}(\Vector{x})$.
Moreover, we may use the fact that $p_{\Vector{\eta},H}(\Vector{x})$ is a \struc{homogeneous} polynomial in the variables $x_1$ and $x_2$, so the sign of the values stays the same if we set $x_2=1$ \cite{DualPhosphorSONC}.
 In what follows, we consider
\begin{align}
\begin{split}
    p_{\Vector{\eta},H}(x_1, x_3) &= K_2\kappa_3(\struc{\kappa_3\kappa_{12} - \kappa_6\kappa_9})\Big(K_2K_4\kappa_3\kappa_9x_1^4x_3^2 + K_1K_3\kappa_6\kappa_{12}x_1^2x_3^2+K_2K_3\kappa_3\kappa_{12}x_1^3x_3^2\Big)\\ &+ K_1K_2K_3\kappa_3\kappa_6\kappa_{12}\Big(\struc{(K_2+K_3)\kappa_3\kappa_{12} - (K_1+K_4)\kappa_6\kappa_9}\Big)x_1^2x_3\\
    &+K_1\kappa_6\Big(K_2^2K_4\kappa_3^2\kappa_9^2x_1^4x_3 + 2K_2K_3K_4\kappa_3^2\kappa_9\kappa_{12}x_1^3x_3\\
    &+K_1K_2K_3K_4\kappa_3\kappa_6\kappa_9\kappa_{12}x_1^2  + 2K_1K_2K_3\kappa_3\kappa_6\kappa_{12}^2x_1x_3\\
    &+ K_1K_2K_3^2\kappa_3\kappa_6\kappa_{12}^2x_1 + K_1K_3^2\kappa_6^2\kappa_{12}^2x_3 + K_1^2K_3^2\kappa_6^2\kappa_{12}^2\Big).\label{eq:polynomial_eta_H}
\end{split}
\end{align}
We infer the monostationarity of the CRN from the nonnegativity of $p_{\Vector{\eta},H}(x_1, x_3)$ for $x_1, x_3$ in $\mathbb{R}_{>0}$.
Note that mapping $x_2\mapsto 1$ in $p_{\Vector{\eta}, H}(\Vector{x})$ corresponds to projecting $H$ to the $x$-$z$-plane by setting the $y$-coordinate to be $0$.
We label the coordinates $(x_1, 0, x_3)=:(x_1,x_3)$ of the projected face $H$ as
\begin{align}
\label{eq:hexagon-coordinates}
\begin{split}
    \Vector{\Vector{\alpha_1}}:=(0,0), 
    ~\Vector{\Vector{\alpha_2}}:=(2,0),
    ~\Vector{\Vector{\alpha_3}}:=(4,1),
    ~\Vector{\Vector{\alpha_4}}:=(4,2), 
    ~\Vector{\Vector{\alpha_5}}:=(2,2),
    ~\Vector{\Vector{\alpha_6}}:=(0,1),\\
    \Vector{\Vector{\beta_1}}:= (1,0),
    ~\Vector{\Vector{\beta_2}}:=(3,2), 
    ~\Vector{\Vector{\iota_1}}:=(1,1),
    ~\Vector{\Vector{\iota_2}}:=(3,1)
    ~\text{ and }~\Vector{m}:=(2,1).
\end{split}
\end{align}
In Figure \ref{fig:hexagonalface_LABELS}(a) we display $H$ along with the labels from Equation \eqref{eq:hexagon-coordinates}. 
\begin{figure}[h!]
    \centering
    \includegraphics[width=0.3\linewidth]{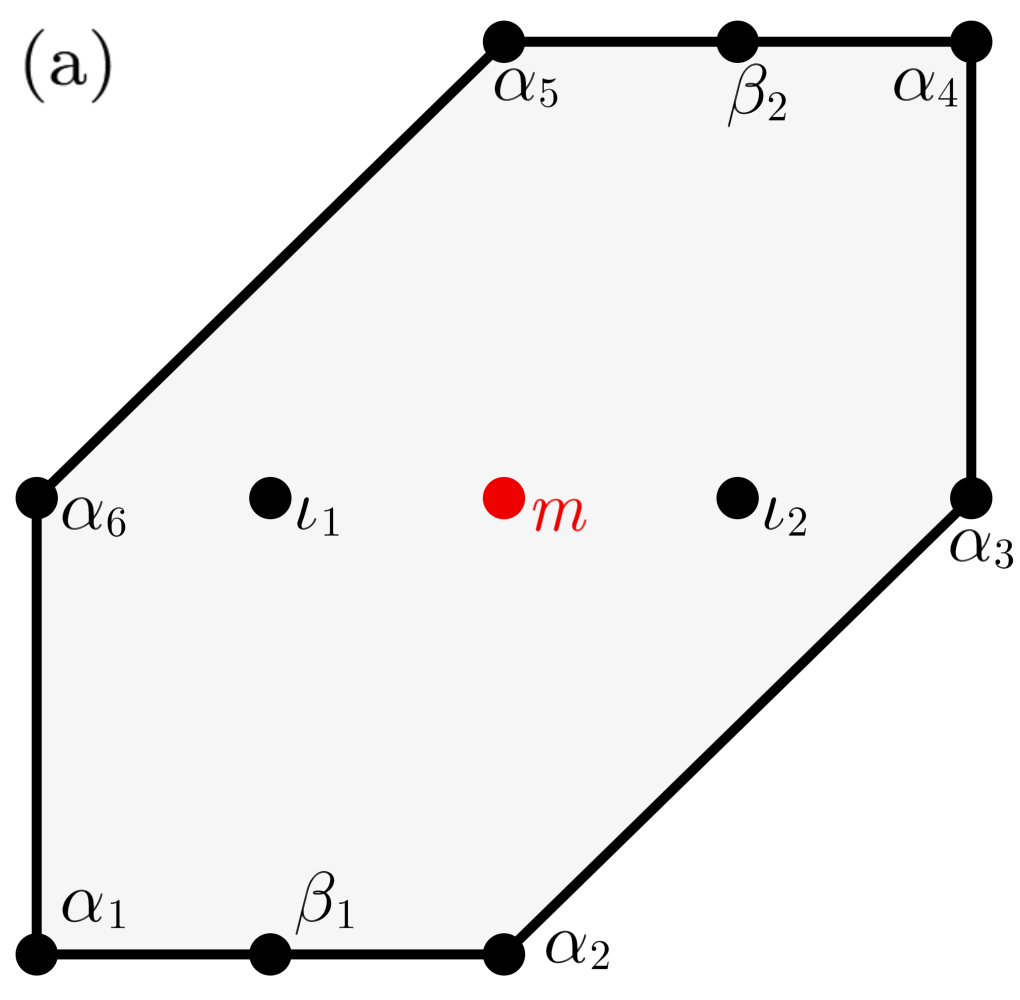}
    \hspace*{16mm}
    \includegraphics[width=0.3\linewidth]{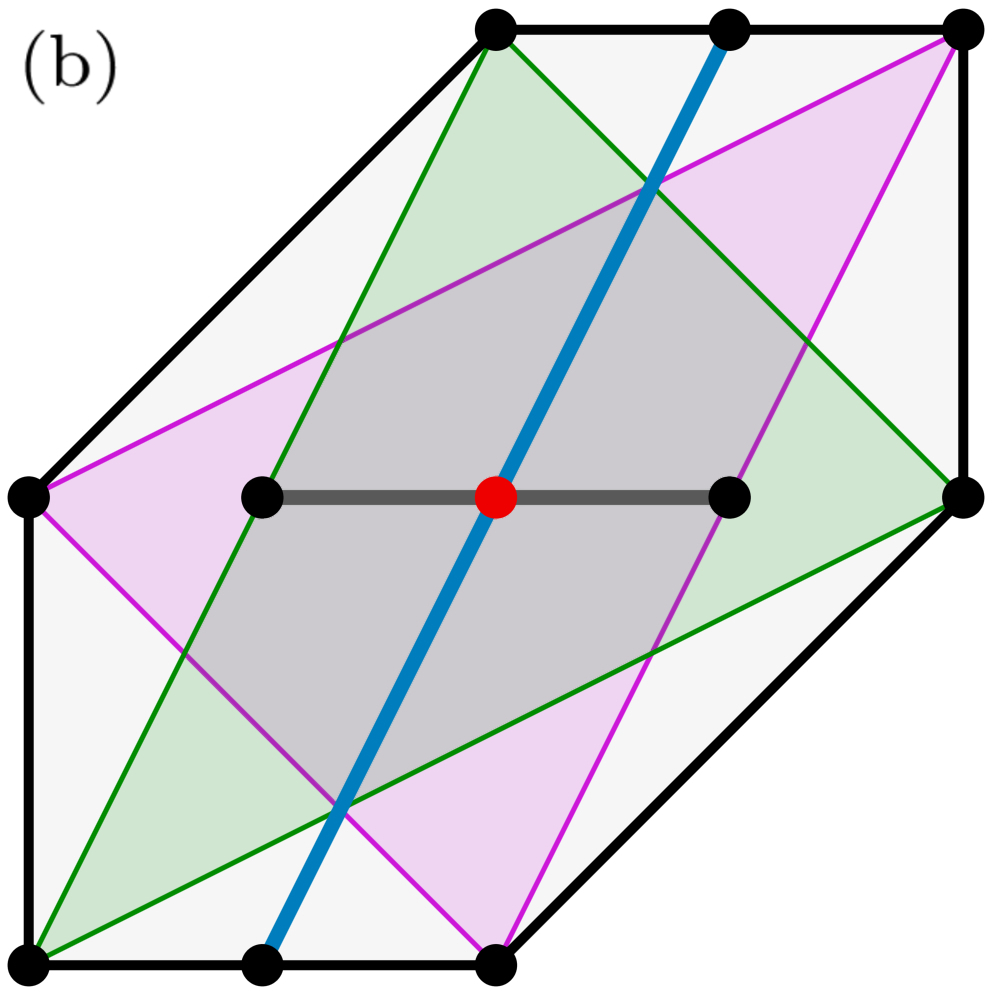}

    \caption{(a) Face $H$ of the Newton polytope of $p_{\Vector{\eta}}(\Vector{x})$ with labeled vertices. Note that the labeling of the hexagon's vertices is different from the original article \cite{DualPhosphorSONC}. (b) Specific vertex cover for $p_{\Vector{\eta},H}(x_1, x_3)$ used in Feliu et al. \cite{DualPhosphorSONC}, later referred to as cover $\mathcal{CC}(9)$.}
    \label{fig:hexagonalface_LABELS}
\end{figure}
We call $\Vector{m} = (2,1)$ the \struc{negative point} that corresponds to the exponent vector of the only monomial $x_1^2x_3$ of $p_{\Vector{\eta},H}(x_1, x_3)$ with a negative coefficient. This point is exactly obtained by projecting the vertex $(2,2,1)$.
The idea in Feliu et al. \cite[§3.2]{DualPhosphorSONC} to determine monostationarity via SONC is the following one:
\begin{enumerate}
    \item Decompose $p_{\Vector{\eta},H}(x_1, x_3)$ into circuit polynomials containing the negative point $\Vector{m}$ in the interior. 
    \item Symbolically calculate the circuit number of those circuit polynomials according to Theorem \ref{theorem_circuit_number}, in order to certify their nonnegativity in dependence of any $\Vector{\eta}$. 
    \item The monostationarity is a consequence of Lemma \ref{lem:mono_nonnegativity}. 
\end{enumerate} 
We briefly sketch this idea with an example:
\begin{eg}
\label{ex:one_circuit_in_hexagon}
Recall Example \ref{exampleSONC}. 
The circuit polynomial in Figure \ref{Fig:Ex:NewtonPolytope} contains the negative point $\Vector{m} = (2,1)$ in its interior, while the vertices are $\Vector{\alpha_2} = (2,0),~ \Vector{\alpha_6} = (0,1),~ \Vector{\alpha_4} = (4,2)$. 
Considering a subset of the monomials in $p_{\Vector{\eta},H}(x_1, x_3)$, we can find a circuit polynomial $f_{\Vector{\eta},H}(x_1,x_3)$ that also has the support $\{\Vector{m}, \Vector{\alpha_2}, \Vector{\alpha_6}, \Vector{\alpha_4}\}$. In particular, we have
\begin{eqnarray}
    f_{\Vector{\eta},H}(x_1,x_3)&=&{b(\Vector{\eta})}K_1K_2K_3\kappa_3\kappa_6\kappa_{12}x_1^2x_3
    +K_1^2K_2K_3K_4\kappa_3\kappa_6^2\kappa_9\kappa_{12}x_1^2
     + K_1^2K_3^2\kappa_6^3\kappa_{12}^2x_3\\
     &~&+\,{a(\Vector{\eta})}K_2^2K_4\kappa_3^2\kappa_9x_1^4x_3^2.\nonumber\label{eq:example_poly_circuit}
\end{eqnarray}
Calculating the corresponding circuit number $\Theta_{f}$, we have by Theorem \ref{theorem_circuit_number} that $f_{\Vector{\eta},H}(x_1,x_3)$ is nonnegative if and only if
\begin{align}
    -b(\Vector{\eta})K_1K_2K_3\kappa_3\kappa_6\kappa_{12} 
    ~\leq ~\Theta_{f} ~:= ~3K_1K_2K_3\kappa_3\kappa_6\kappa_{12}\sqrt[3]{K_1K_4^2\kappa_6^2\kappa_9^2a(\Vector{\eta})}.\label{ex:circuit_number_with_symbols}
\end{align}
Any set of parameters that satisfies inequality \eqref{ex:circuit_number_with_symbols} leads to the nonnegativity of $f_{\Vector{\eta},H}(x_1,x_3)$. Since all other monomials in $p_{\Vector{\eta},H}(x_1, x_3)$ except for the one corresponding to $\Vector{m}$ have positive coefficients, they cannot cause negative values of $p_{\Vector{\eta},H}(x_1, x_3)$. Therefore, if \eqref{ex:circuit_number_with_symbols} holds, we can certify nonegativity of $p_{\Vector{\eta},H}(x_1, x_3)$ and monostationarity via the circuit polynomial $f_{\Vector{\eta},H}(x_1,x_3)$.
\end{eg}
In Example \ref{ex:one_circuit_in_hexagon} we only made use of one circuit polynomial to certify the nonnegativity of $p_{\Vector{\eta},H}(x_1, x_3)$ and did not involve the other points in the hexagon.
However, we can enlarge the right-hand side of inequality \eqref{ex:circuit_number_with_symbols} by using multiple circuit polynomials consisting of different supports, each having $\Vector{m}$ in their interior.
Hence the idea is to construct a SONC decomposition of $p_{\Vector{\eta},H}(x_1, x_3)$.
See Figure \ref{fig:hexagonalface_LABELS}(b), for a possible selection of four circuit polynomials only sharing $\Vector{m}$ in their support, that was used in the monostationarity analysis of Feliu et al. \cite{DualPhosphorSONC}.
Since each of these circuit polynomials $f_{\Vector{\eta},H}$, $g_{\Vector{\eta},H}$, $h_{\Vector{\eta},H}$, $q_{\Vector{\eta},H}$ has their individual symbolic circuit number $\Theta_{f}$, $\Theta_{g}$, $\Theta_{h}$, $\Theta_{q}$ that each gives a condition on the negative coefficient $b(\Vector{\eta})K_1K_2K_3\kappa_3\kappa_6\kappa_{12}$ of $\Vector{m}$, we certify nonnegativity of $p_{\Vector{\eta},H}(x_1, x_3)$ by Theorem \ref{theorem_circuit_number} if
\begin{align}
    -b(\Vector{\eta})K_1K_2K_3\kappa_3\kappa_6\kappa_{12} ~\leq~  \Theta_{f} +\Theta_{g} + \Theta_{h} + \Theta_{q}.\label{eq:circuit_cover_inequ}
\end{align}
Confer \cite[Thm. 3.5]{DualPhosphorSONC} for a simplified expression of \eqref{eq:circuit_cover_inequ}.

We refer to a decomposition of $p_{\Vector{\eta},H}(x_1, x_3)$ into circuit polynomials that each contain $\Vector{m}$ as the negative point in the interior as a \struc{circuit cover} or simply \struc{cover}. This decomposition is called a \struc{pure cover} if each point in the face $H\setminus \{\Vector{m}\}$ is used in exactly one circuit polynomial, see for example Figure \ref{fig:hexagonalface_LABELS}(b). If a point in $H\setminus \{\Vector{m}\}$ is part of more than one circuit polynomial, we define this decomposition as a \struc{weighted cover}. The name derives from the idea that if a point $\Vector{\alpha}$ is used more than once, the corresponding coefficient $c_{\Vector{\alpha}}$ has to contribute to more than one circuit polynomial and its circuit number. Therefore, a weighting, which is a convex combination, determines the fractions of $c_{\Vector{\alpha}}$ that contribute to each involved circuit polynomial. 
We say that we \struc{split} the coefficient $c_{\Vector{\alpha}}$ among the circuit polynomials, see Section \ref{sec:weighted}. 
Note that we do not consider any decompositions into circuit polynomials that do not use all points $H\setminus \{\Vector{m}\}$.
This would only yield smaller sums of circuit numbers.
As we aim to classify an as large as possible region of monostationarity, a larger sum of circuit numbers is beneficial by Theorem \ref{theorem_circuit_number}.

\section{Results}
\label{section:empirical data}
In the previous sections, we have introduced theoretical results for the nonnegativity of polynomials and how they can be used to assess the monostationarity of CRNs. By applying these tools to computer-generated experimental data, we gain novel insights into the dual phosphorylation network. As laid out in Section \ref{section:dual-phosphorylation}, sufficient conditions for enabling and precluding multistationarity are available. The region of monostationarity of the dual phosphorylation network is already characterized in most cases (cf. \Cref{section:dual-phosphorylation}). 
Still, there exists a hexagonal face $H$, where the space of reaction rate constants has not yet been fully classified (see Figure \ref{fig:hexagonalface_LABELS}). 
If the corresponding polynomial $p_{\Vector{\eta},H}$ is nonnegative there always exists a SONC decomposition for $p_{\Vector{\eta},H}$ and vice versa (see \cite[Thm. 3.9]{Wang}).
However, an a priori chosen cover does not necessarily completely classify the polynomial's region of nonnegativity.
In that case, a given SONC certificate only provides a sufficient criterion on the nonnegativity of $p_{\Vector{\eta},H}$ (see Equation (\ref{eq:circuit_cover_inequ})), not a necessary one. 

To answer the question of how the different SONC decompositions of $H$ compare to each other, we intend to use a Monte-Carlo approach to estimate the sizes of the associated regions of monostationarity. This allows us to investigate multiple invariants such as the number of points that are contained in a unique cover or the pure cover that occupies the largest region of monostationarity.

\subsection{Classification of Pure Covers on the Hexagon}
\label{section:classification}
Given a Newton polytope of a polynomial with vertices corresponding to positive exponents and negative points in the interior, it is in general not clear how to decompose it into circuit polynomials.
The numerical geometric programming approach by Dressler et al. \cite{constrainedoptimizationapproach} first chooses a non-unique triangulation of the Newton Polytope in order to lower bound the polynomial's minimum.
Likewise in the setting of symbolic coefficients, the chosen pure cover in Feliu et al. \cite{DualPhosphorSONC} is not unique and was selected for symmetry and simplicity reasons, see \cite[Remark 3.6]{DualPhosphorSONC}. 
While it provides a sufficient condition on the parameters in $\Vector{\eta}$ to lead to monostationarity, it does not fully include the entire parameter space of the monostationarity region.
In this section, we classify all possible pure covers to then empirically compare their quality based on the parameter space that leads to monostationarity to the one from Feliu et al. \cite{DualPhosphorSONC} and among each other in the upcoming sections.

In this ad-hoc classification, we refer heavily to the labeling in Figure \ref{fig:hexagonalface_LABELS}. In particular, we justify that Figure \ref{fig:all16configurations} contains all 16 valid pure covers of the polytope. In the following, we denote the pure cover with the label $i$ by \struc{$\mathcal{CC}(i)$}.
\begin{thm}
\label{prop:16-circuit-covers}
    There are 16 pure covers of the Newton polytope of $p_{\Vector{\eta}, H}(x_1, x_3)$ as given in \eqref{eq:polynomial_eta_H}, that are enumerated in Figure \ref{fig:all16configurations}.
\end{thm}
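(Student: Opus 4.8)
The plan is to reduce the statement to a finite combinatorial enumeration. The support of $p_{\Vector{\eta},H}(x_1,x_3)$ is exactly the eleven lattice points $\Vector{\alpha_1},\dots,\Vector{\alpha_6},\Vector{\beta_1},\Vector{\beta_2},\Vector{\iota_1},\Vector{\iota_2},\Vector{m}$ of $H$ from \eqref{eq:hexagon-coordinates}, so a pure cover is precisely a partition of the ten points $A:=\operatorname{supp}(p_{\Vector{\eta},H})\setminus\{\Vector{m}\}$ into blocks, each block being the vertex set of a simplex in the plane that contains $\Vector{m}$ in its relative interior. Since we work in two variables, each such simplex is one- or two-dimensional, so each block is either a \emph{segment} (two points with $\Vector{m}$ strictly between them) or a \emph{triangle} (three points with $\Vector{m}$ in the interior). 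The proof then splits into listing the admissible blocks, constraining how they combine, and counting the partitions.

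First I would classify the admissible segments: a pair $\{\Vector{p},\Vector{q}\}\subset A$ qualifies exactly when $\Vector{p},\Vector{q},\Vector{m}$ are collinear with $\Vector{m}$ strictly between them, which crucially does \emph{not} require $\Vector{m}$ to be their midpoint. Inspecting the lines through $\Vector{m}=(2,1)$ that meet $A$ in at least two points, I get the three ``diameters'' $\{\Vector{\alpha_1},\Vector{\alpha_4}\}$, $\{\Vector{\alpha_2},\Vector{\alpha_5}\}$, $\{\Vector{\beta_1},\Vector{\beta_2}\}$ and the horizontal line through $\Vector{m}$, which carries $\Vector{\alpha_6},\Vector{\iota_1}$ to the left and $\Vector{\iota_2},\Vector{\alpha_3}$ to the right of $\Vector{m}$; each of the four left–right pairs sandwiches $\Vector{m}$, so there are $7$ admissible segments in all. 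The two asymmetric segments $\{\Vector{\alpha_6},\Vector{\iota_2}\}$ and $\{\Vector{\iota_1},\Vector{\alpha_3}\}$, where $\Vector{m}$ is interior but not central, are exactly the ones I expect to be easy to miss.

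Next I would classify the admissible triangles by the criterion that $\Vector{m}$ is interior iff the three direction vectors from $\Vector{m}$ to the vertices positively span $\R^2$, i.e.\ all three angular gaps around $\Vector{m}$ are strictly below $180^\circ$. Reading off the directions to the ten points, antipodal and codirectional pairs are excluded automatically; in particular no admissible triangle can use two of the four points on the horizontal line through $\Vector{m}$. A finite case check then leaves $14$ admissible triangles, each containing at most one point of that line. Writing $a$ for the number of segments and $b$ for the number of triangles in a cover, the block-size identity $2a+3b=10$ forces $(a,b)\in\{(5,0),(2,2)\}$.

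Finally I would count each type. For $(5,0)$, each of the six points off the horizontal line lies on a unique admissible segment, so the three diameters are forced, and the four points on the line split into two admissible segments in exactly two ways, giving $2$ covers. For $(2,2)$, at least one of the two segments must lie on the horizontal line, since each triangle absorbs at most one of its four points; if both do, they consume those four points in one of the two ways above while the two triangles are forced to be $\{\Vector{\alpha_4},\Vector{\alpha_5},\Vector{\beta_1}\}$ and $\{\Vector{\alpha_1},\Vector{\alpha_2},\Vector{\beta_2}\}$, yielding $2$ covers, and if exactly one does, choosing the off-line diameter ($3$ ways) and the on-line segment ($4$ ways) determines the two triangles uniquely, yielding $12$ covers; here the point reflection $\sigma(x_1,x_3)=(4-x_1,\,2-x_3)$ through $\Vector{m}$, an affine symmetry of $H$ under which admissibility is invariant, organizes the check. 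Summing gives $2+(2+12)=16$. The main obstacle is keeping the enumeration simultaneously exhaustive and nonredundant: overlooking the asymmetric on-line segments undercounts, while the symmetric arrangement of directions around $\Vector{m}$ invites double counting of triangles, so $\sigma$ and the collinear quadruple on the horizontal line must be tracked with care.
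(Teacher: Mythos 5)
Your proof is correct and follows essentially the same route as the paper: enumerate the admissible segments and triangles containing $\Vector{m}$, use $2a+3b=10$ to split into the five-segment and two-segment/two-triangle cases, and count $2+2+12=16$. Your writeup is somewhat more explicit than the paper's (the direction-vector criterion, the full lists of $7$ segments and $14$ triangles, and the forced bottom--top pairings), but the underlying argument is the same.
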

\begin{figure}[h!]
    \centering 
    \includegraphics[width=0.243\linewidth]{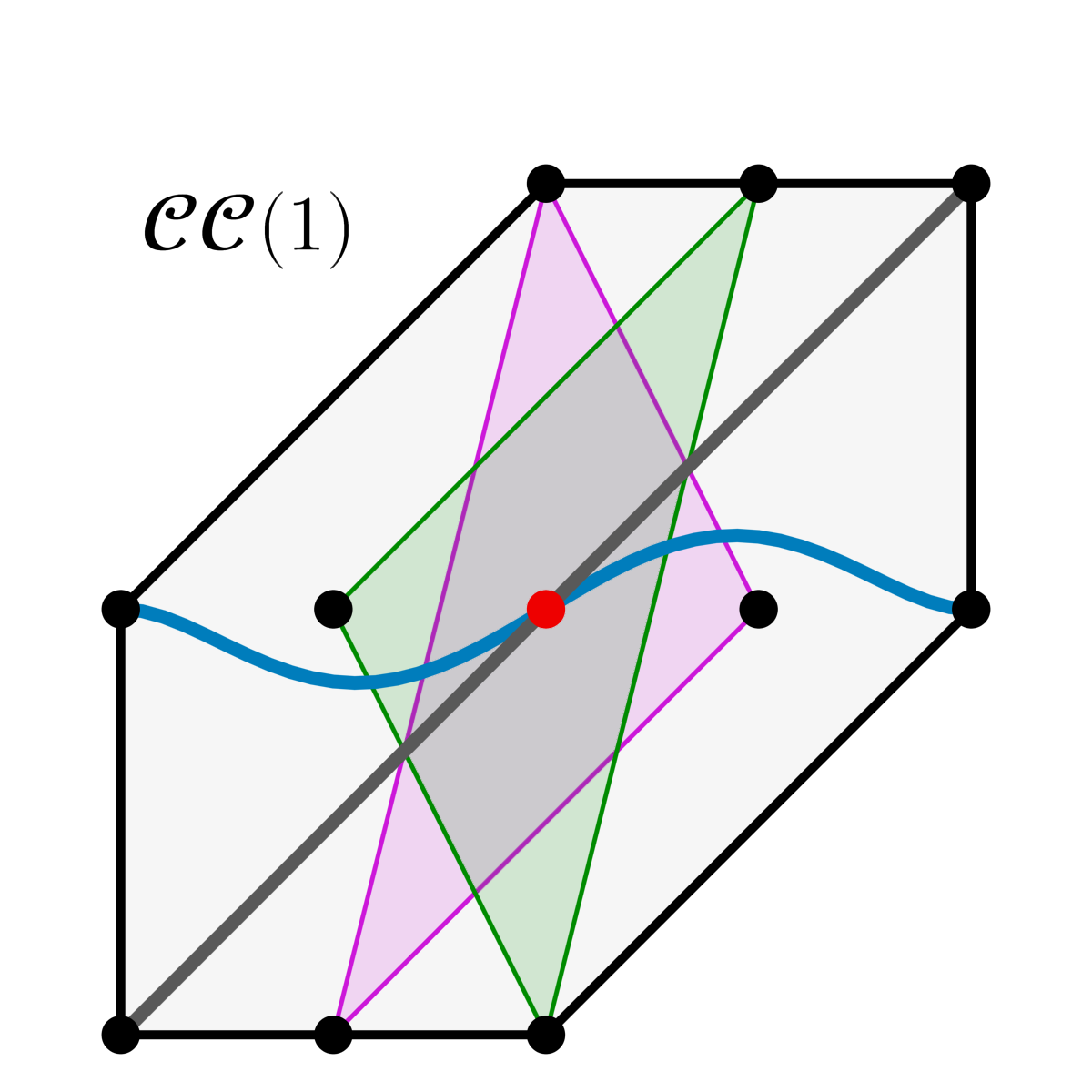}
    \hfill 
    \includegraphics[width=0.243\linewidth]{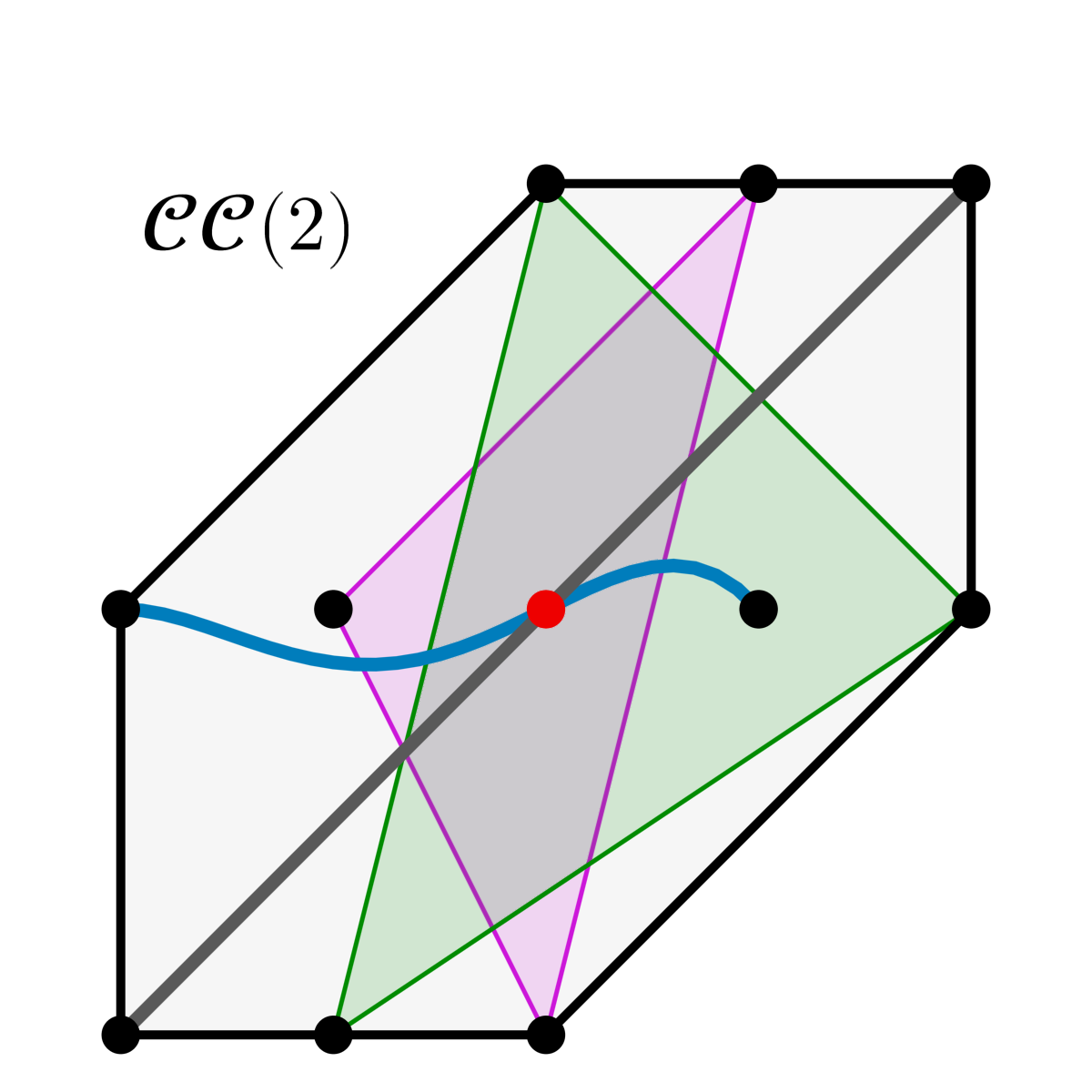}
    \hfill 
    \includegraphics[width=0.243\linewidth]{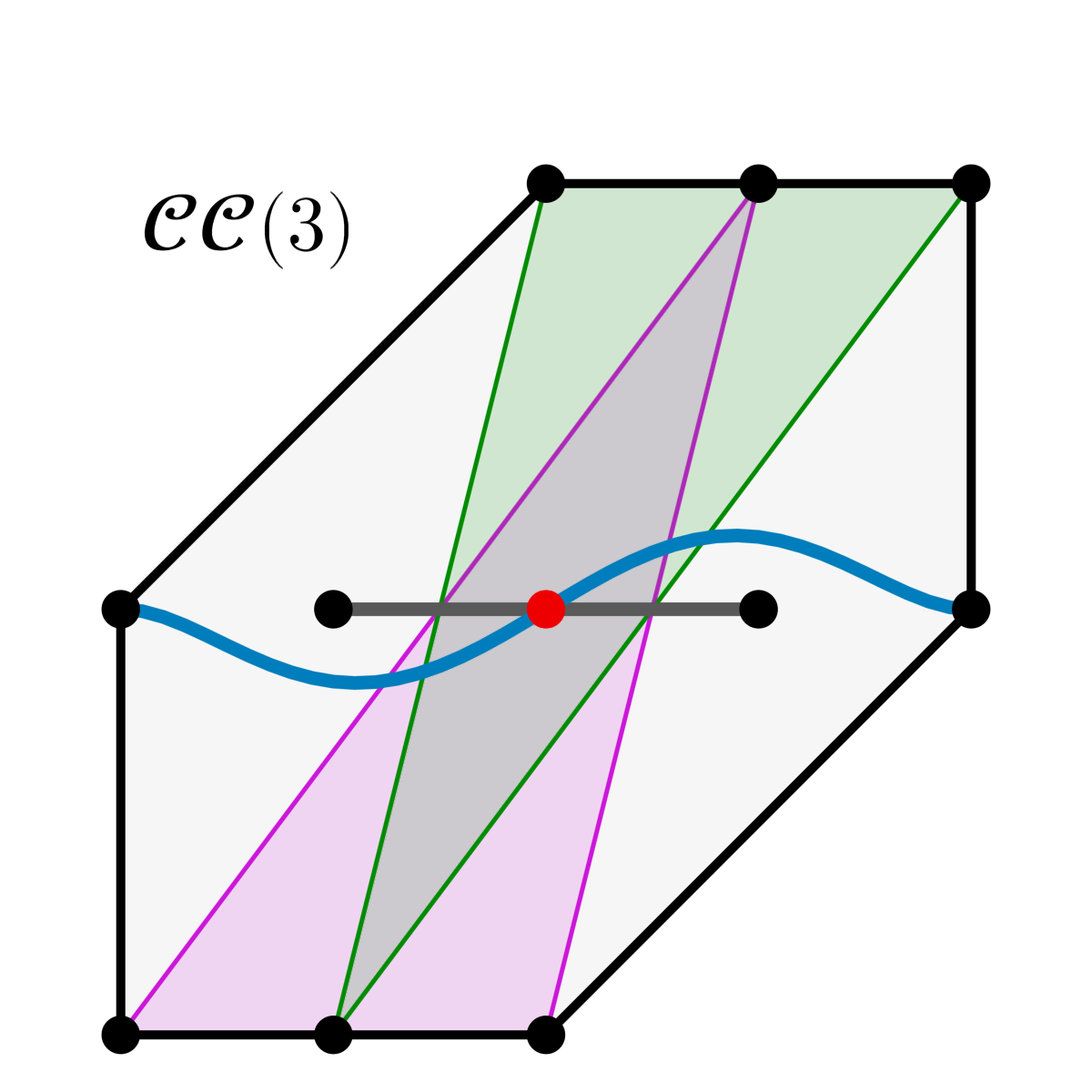}
    \hfill 
    \includegraphics[width=0.243\linewidth]{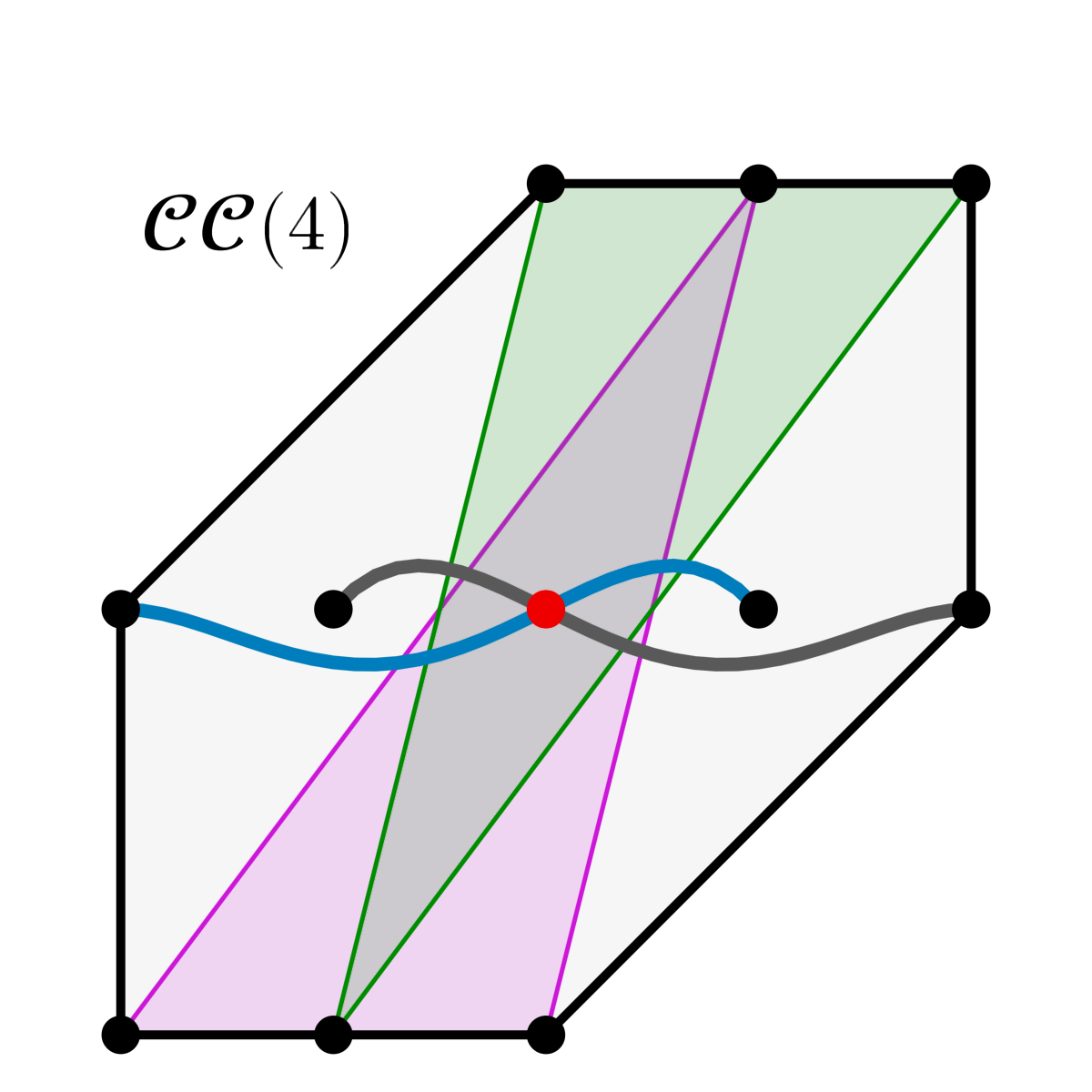}
    
    \includegraphics[width=0.243\linewidth]{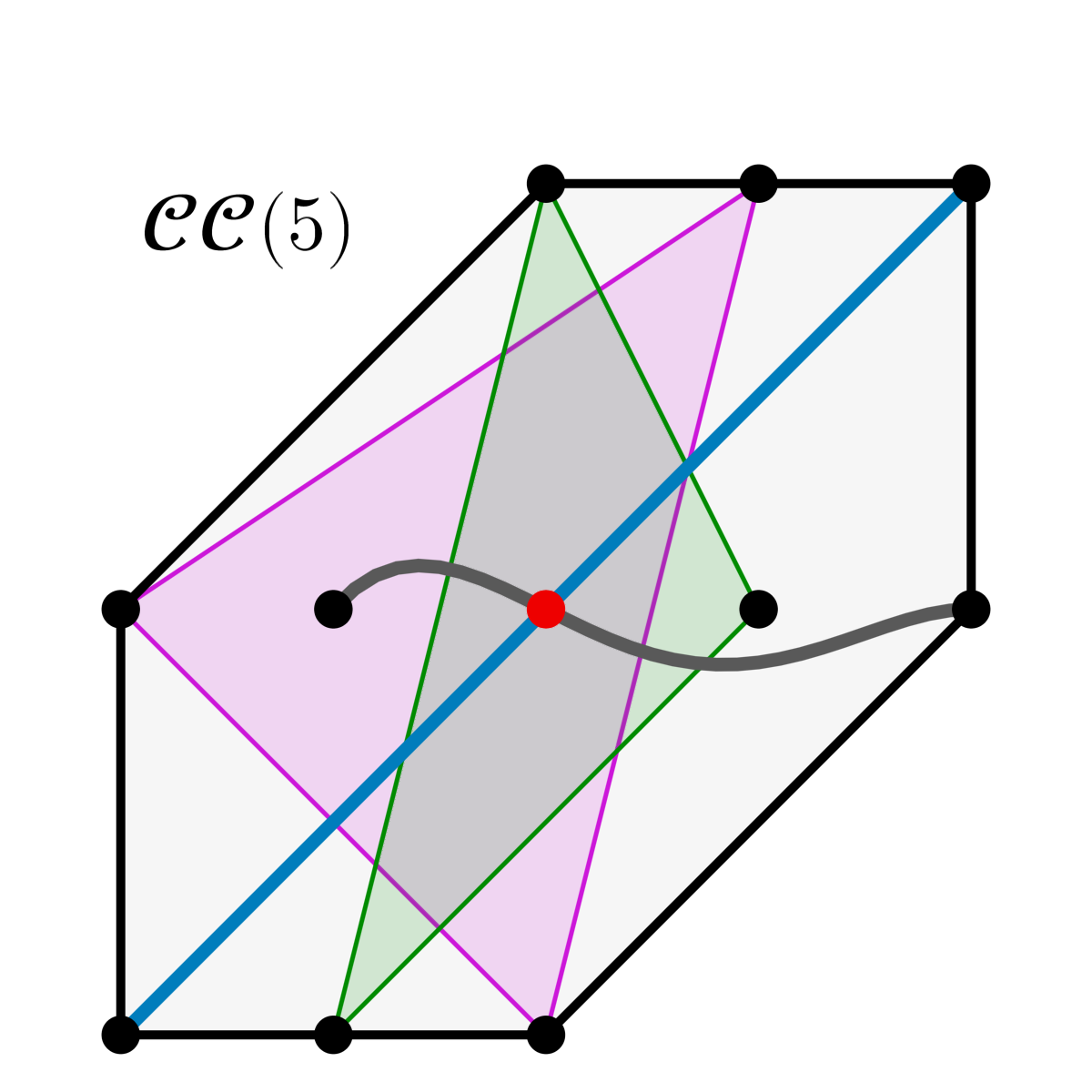}
    \hfill
    \includegraphics[width=0.243\linewidth]{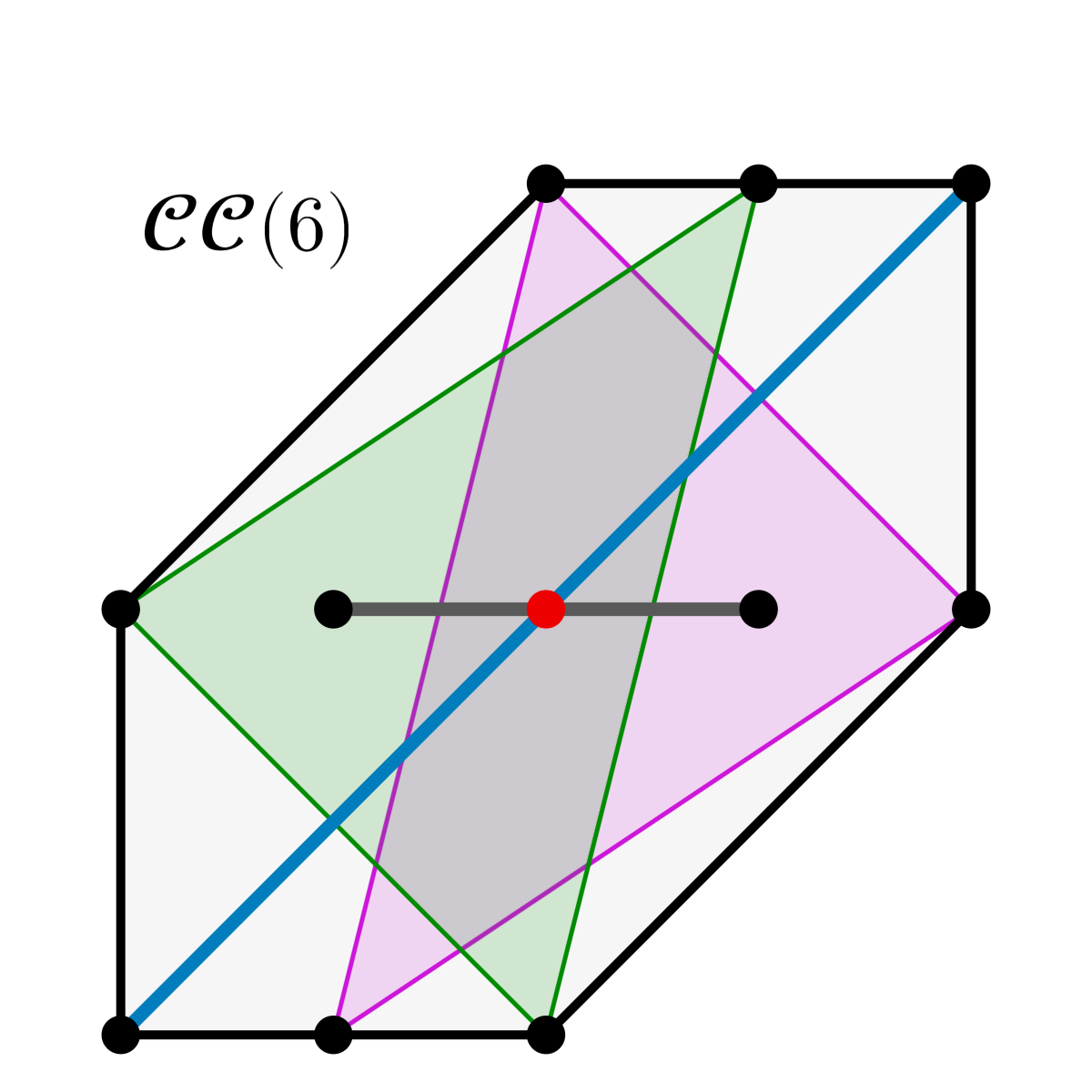}
    \hfill
    \includegraphics[width=0.243\linewidth]{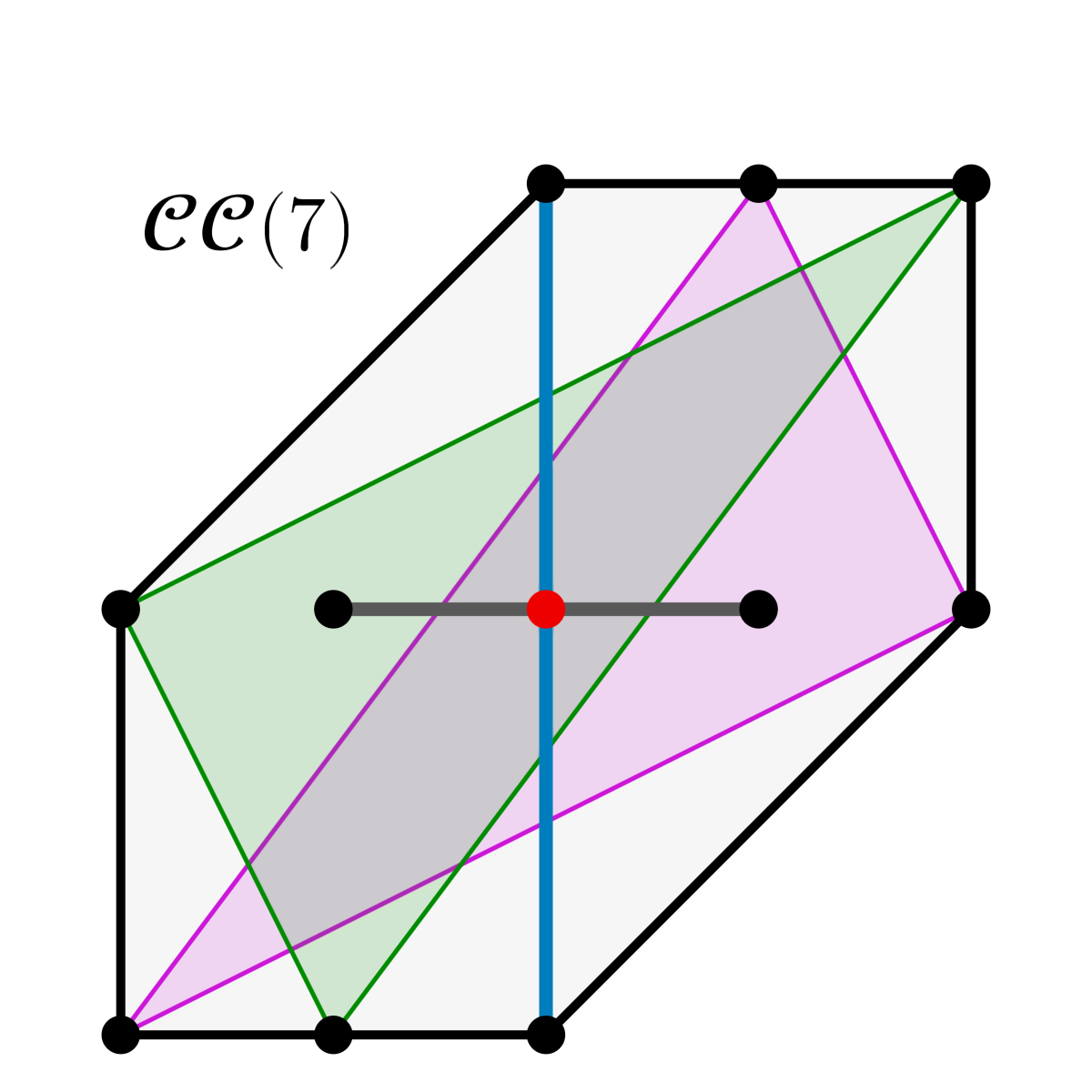}
    \hfill
    \includegraphics[width=0.243\linewidth]{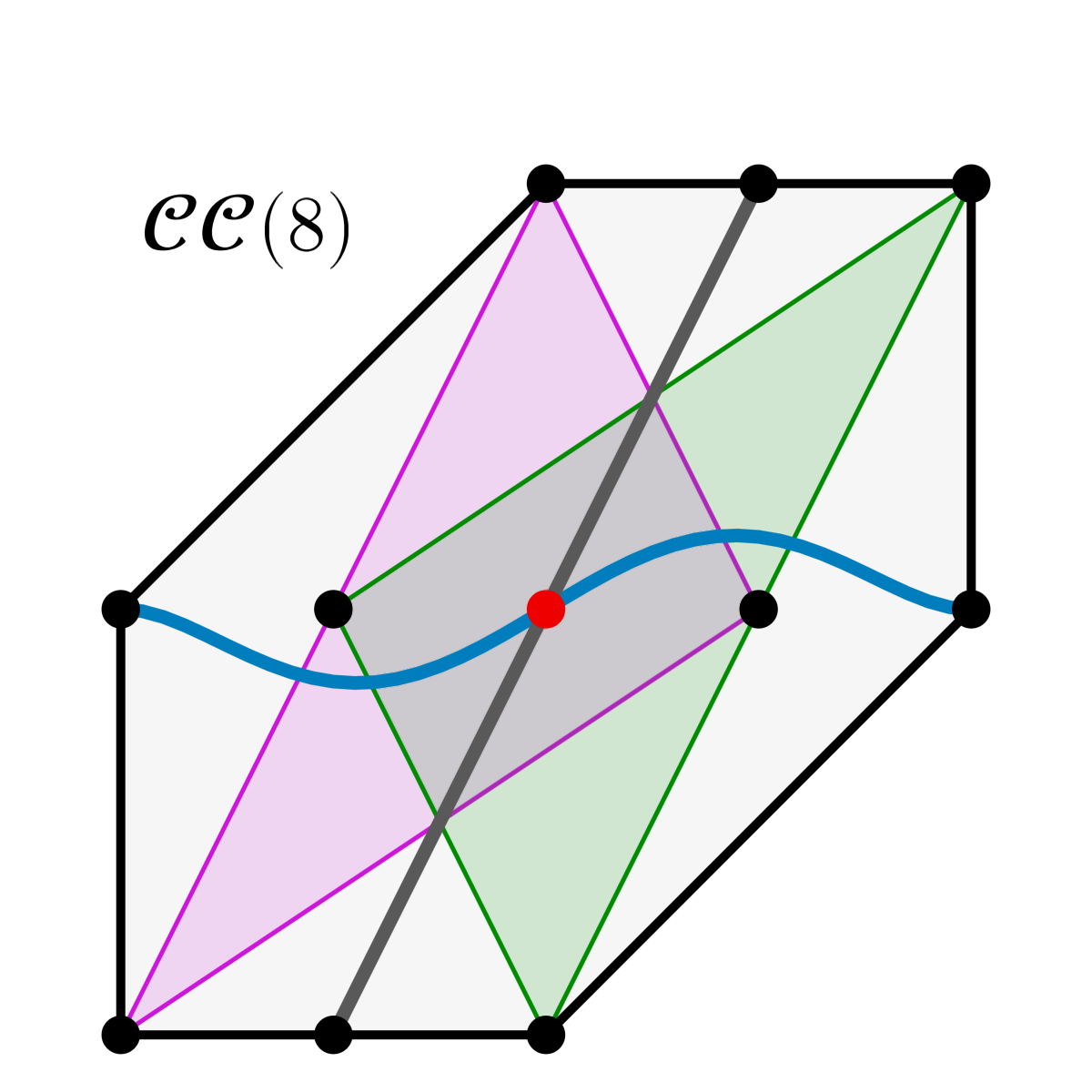}

    \includegraphics[width=0.243\linewidth]{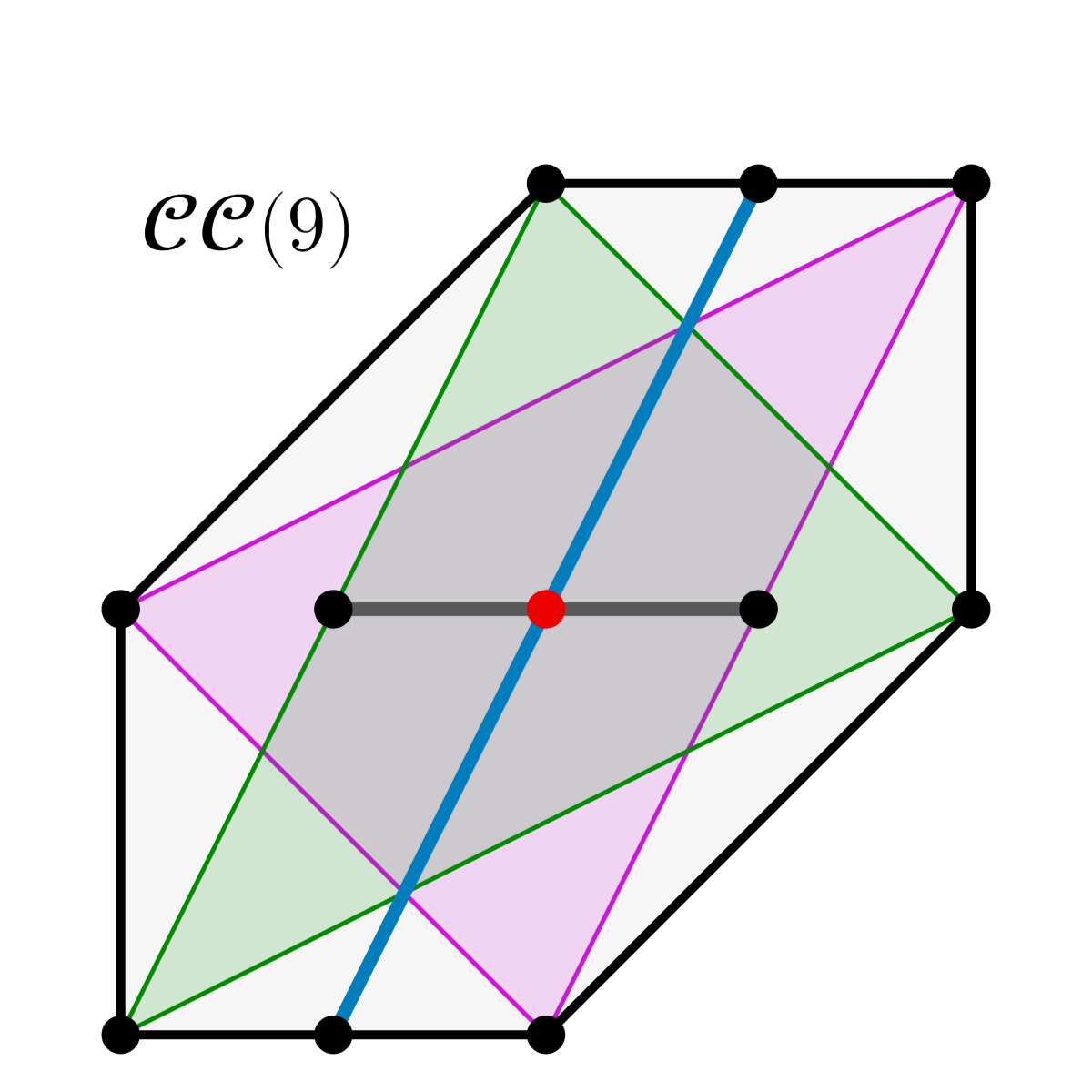}
    \hfill
    \includegraphics[width=0.243\linewidth]{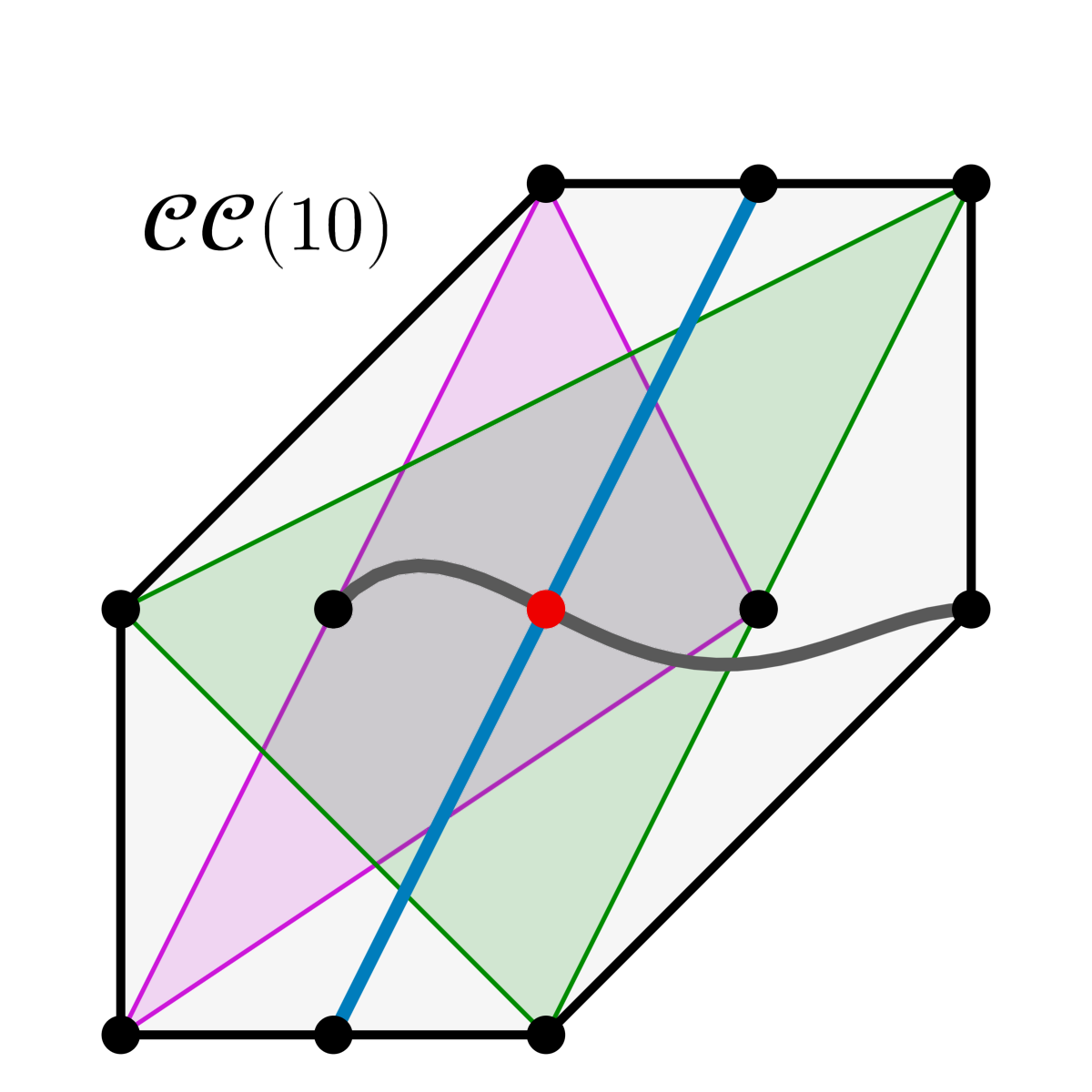}
    \hfill
    \includegraphics[width=0.243\linewidth]{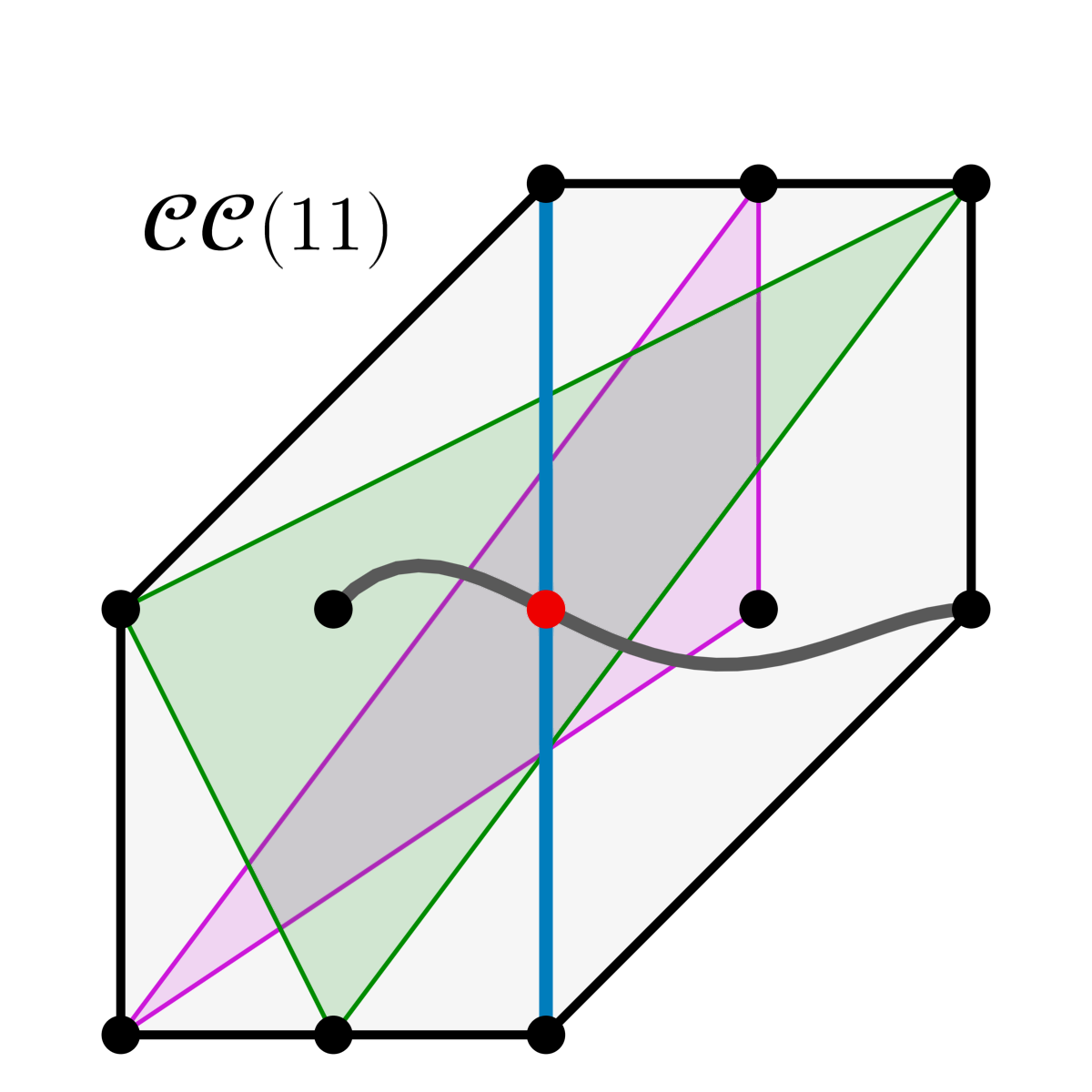}
    \hfill
    \includegraphics[width=0.243\linewidth]{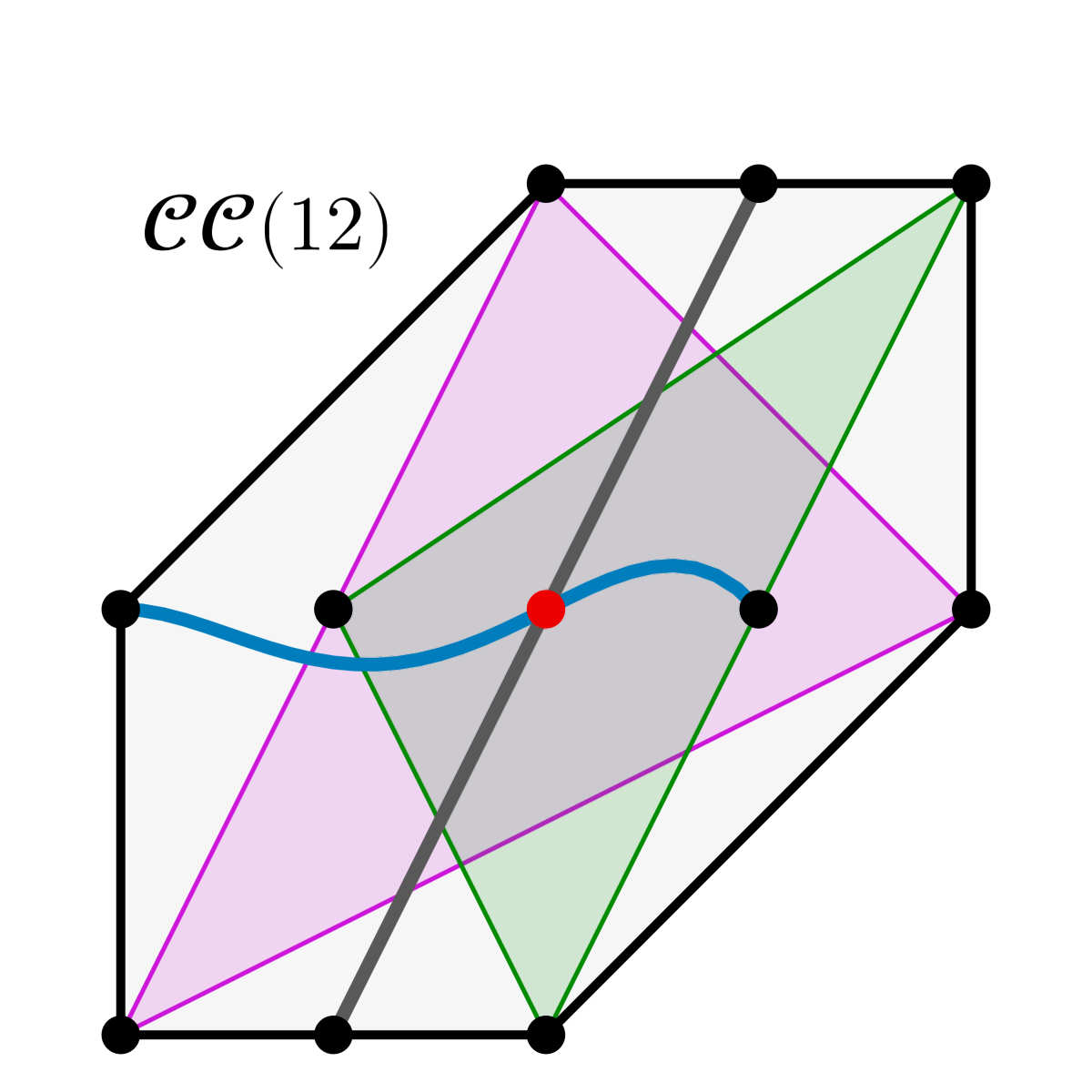}

    \includegraphics[width=0.243\linewidth]{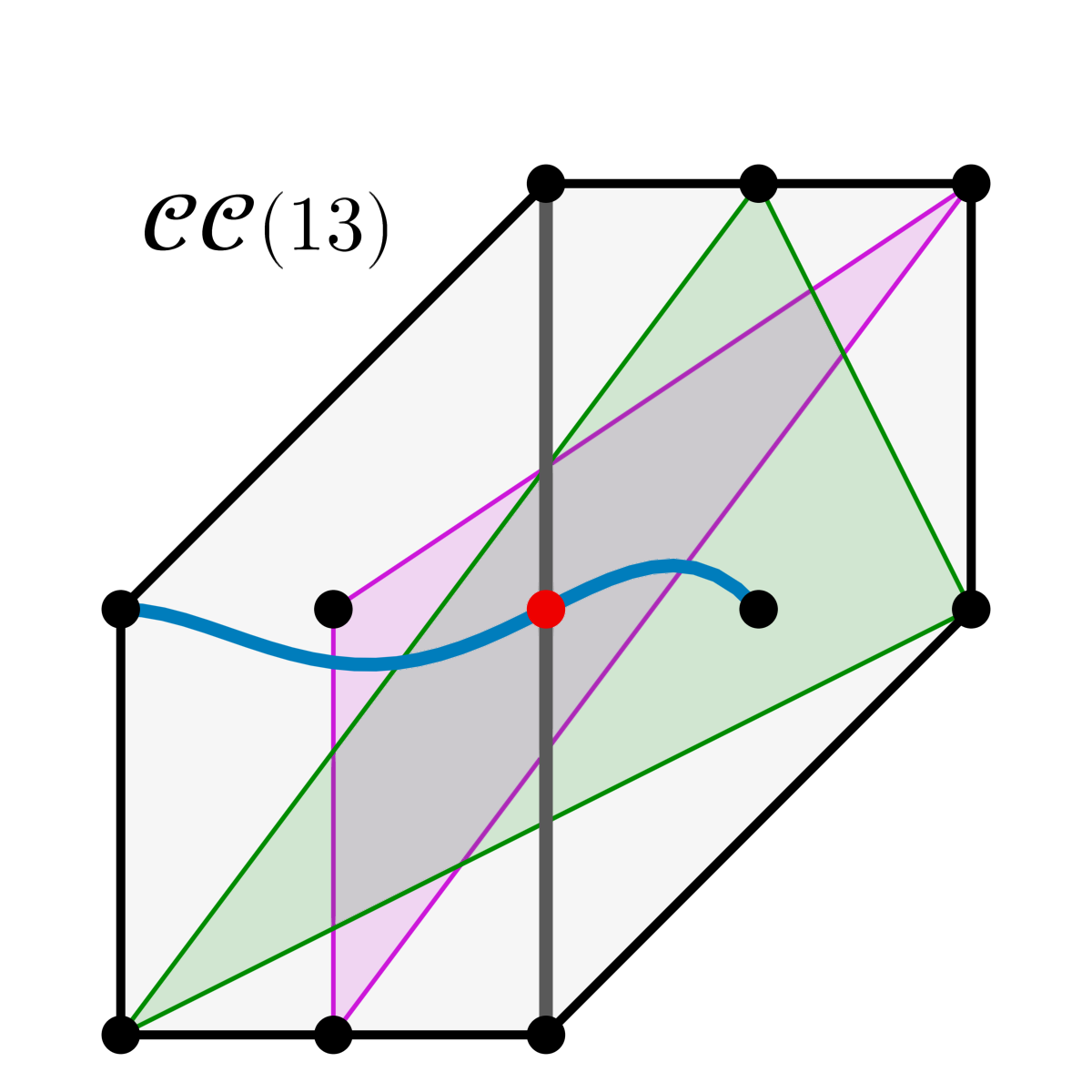}
    \hfill
    \includegraphics[width=0.243\linewidth]{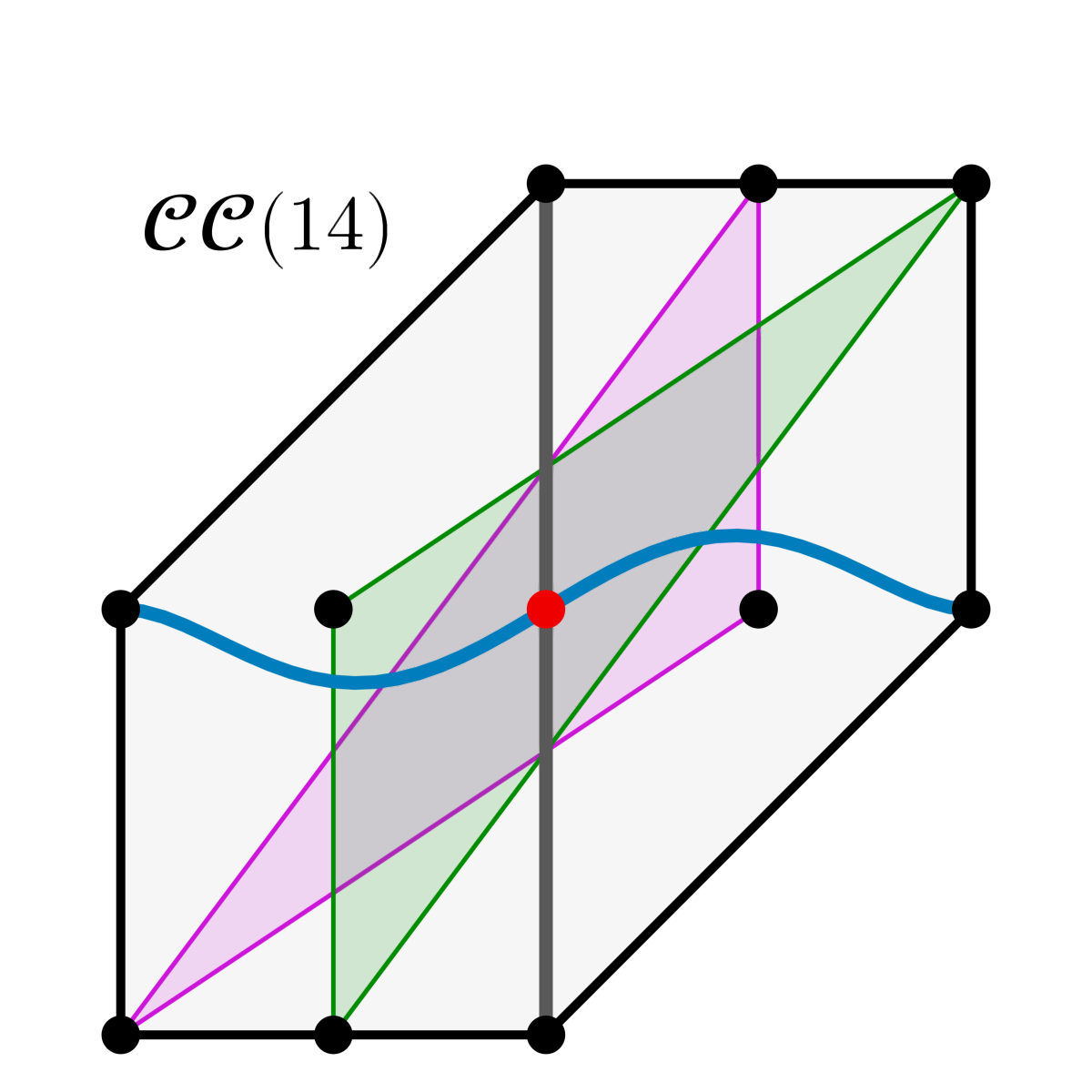}
    \hfill
    \includegraphics[width=0.243\linewidth]{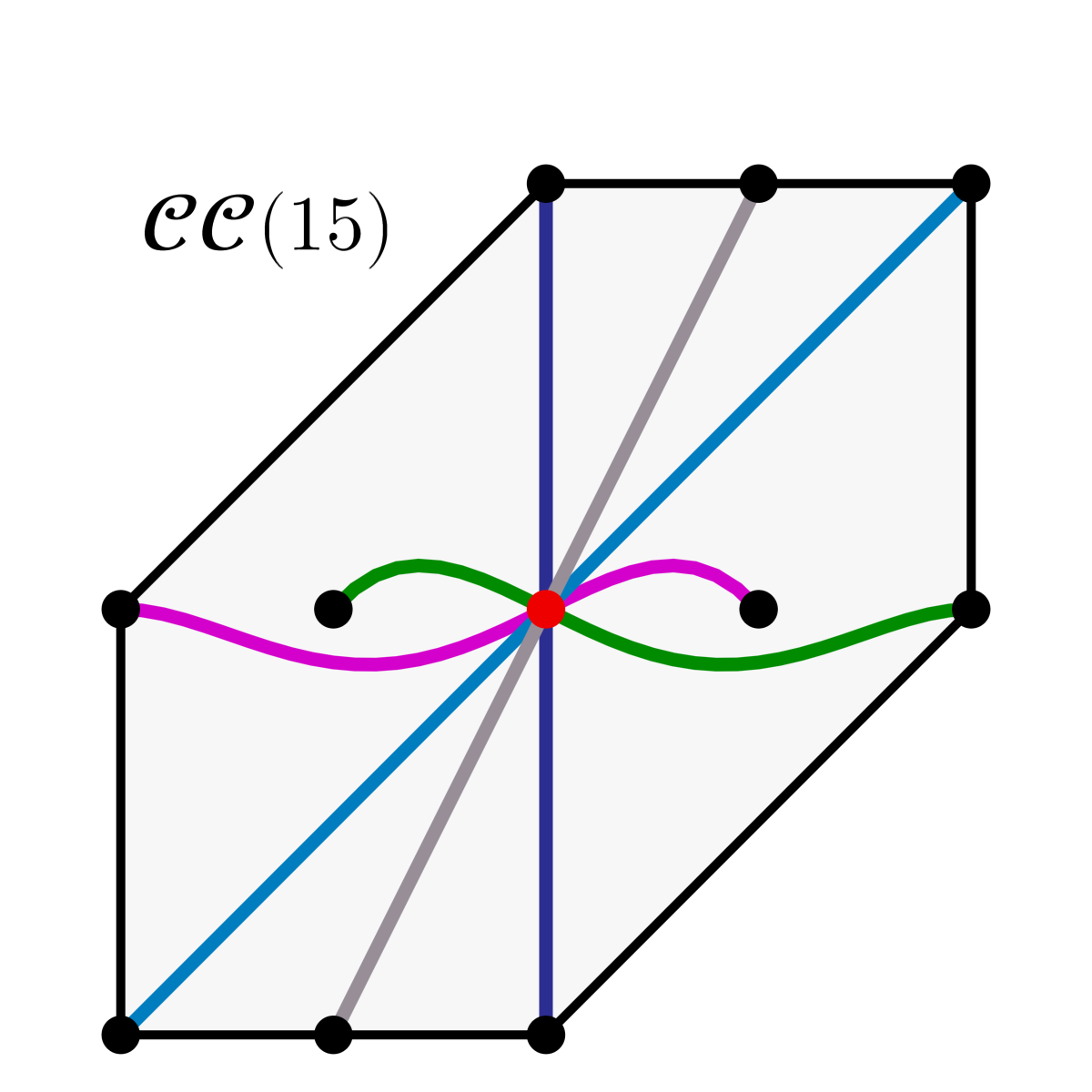}
    \hfill
    \includegraphics[width=0.243\linewidth]{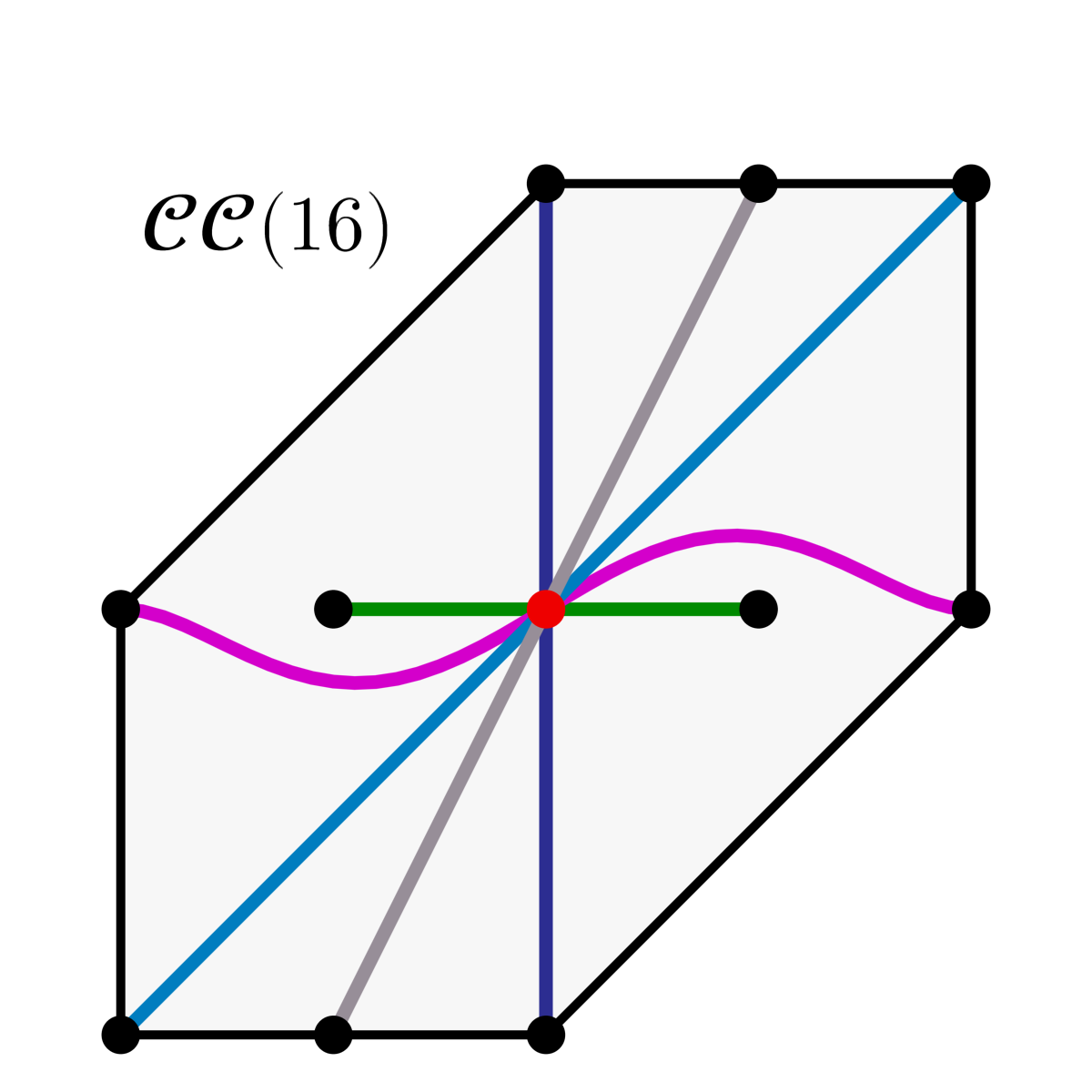}

    \caption{All possible pure covers of the integral hexagon associated with $p_{\Vector{\eta},H}(x_1,x_3)$ defined in \eqref{eq:polynomial_eta_H}. 
    }
    \label{fig:all16configurations}
\end{figure}
\begin{proof}
Every pure cover consists of a partition of $V = \{\Vector{\alpha_1}, \Vector{\alpha_2}, \Vector{\alpha_3}, \Vector{\alpha_4}, \Vector{\alpha_5}, \Vector{\alpha_6}, \Vector{\beta_1}, \Vector{\beta_2}, \Vector{\iota_1}, \Vector{\iota_2}\}$ into simplices that all contain $\Vector{m}$ in their relative interior. Specifically, we must partition the hexagon $H$ into a combination of two-element line segments and three-element triangles that include the center point $\Vector{m}$. Using the fact that there are ten points to partition, a pure cover must either consist of five line segments, or two line segments and two triangles. 

From Figure~\ref{fig:hexagonalface_LABELS}(a), we see that any line segment or triangle contained strictly in $\{\Vector{\alpha_1}, \Vector{\alpha_2}, \Vector{\alpha_6}, \Vector{\beta_1}, \Vector{\iota_1}\}$ cannot contain $\Vector{m}$ in the relative interior. Also, there are unique line segments containing $\Vector{\alpha_1}$, $\Vector{\beta_1}$, and $\Vector{\alpha_2}$ respectively that cross $\Vector{m}$, while $\Vector{\alpha_6}$ and $\Vector{\iota_1}$ each can produce a valid line segment with either $\Vector{\iota_2}$ or $\Vector{\alpha_3}$. 
Thus, we can classify the pure covers by first determining the number of line segments and then selecting which elements from $\{\Vector{\alpha_1}, \Vector{\alpha_2}, \Vector{\alpha_6}, \Vector{\beta_1}, \Vector{\iota_1}\}$ will form these line segments. Exactly two pure covers use five line segments, characterized by pairing $\Vector{\alpha_6}$ with either $\Vector{\iota_2}$ or $\Vector{\alpha_3}$. These covers are labeled as $\mathcal{CC}(15)$ and $\mathcal{CC}(16)$ in Figure \ref{fig:all16configurations}.

Now we catalog the possible triangles to appear in a pure cover. Each triangle can have at most one vertex on the middle row, and needs at least one vertex on both the middle and top rows, in order to contain $\Vector{m}$ in its interior. The two triangles that can have multiple vertices on the same horizontal row are $\{\Vector{\alpha_1}, \Vector{\alpha_2}, \Vector{\beta_2}\}$ and $\{\Vector{\alpha_5}, \Vector{\alpha_4}, \Vector{\beta_1}\}$, and they each preclude any other triangles to appear with them. They are used in covers $\mathcal{CC}(3)$ and $\mathcal{CC}(4)$. 

Every other triangle must contain one vertex on each row. Consider how to construct such a triangle. First, we choose the vertex on the bottom row out of $\Vector{\alpha_1}, \Vector{\beta_1},$ and $\Vector{\alpha_2}$. Assume without loss of generality 
we choose $\Vector{\alpha_1}$. Then the vertex on the top row cannot be the one directly opposite it from $\Vector{m}$, $\Vector{\alpha_4}$, leaving us two remaining options for the vertex on the top. Assume further without loss of generality we choose $\Vector{\alpha_5}$ for the second vertex. This determines which ``side'' of $\Vector{m}$ the vertex on the middle row can be, leaving us with the choice of $\Vector{\iota_2}$ or $\Vector{\alpha_3}$ to finish the triangle. Each such triangle can only be paired with two other triangles. In our example, $\Vector{\alpha_4}$ and $\Vector{\alpha_2}$ must be part of the other triangle, since they cannot be part of a line segment as their opposites $\Vector{\alpha_1}$ and $\Vector{\alpha_5}$. The only choice is which vertex of the middle row to use, $\Vector{\alpha_6}$ or $\Vector{\iota_1}$.

In fact, once we choose the bottom and top vertices of the non-horizontal line segment, we are left with exactly four possible covers: the remaining bottom and top vertices must be in different triangles from their opposite-to-$\Vector{m}$ vertex, and it remains to choose which vertices along the middle row belong as part of the triangles and which form a line segment. Since we have $3$ ways of picking the bottom and top vertex of the line segment, this gives us $3 \cdot 4 = 12$ covers. Combined with covers $\mathcal{CC}(3)$ and $\mathcal{CC}(4)$, where the triangles have no vertices on the middle rows, as well as the five-line-segment covers $\mathcal{CC}(15)$ and $\mathcal{CC}(16)$, that yields all 16 possible covers.
\end{proof}
This theorem shows that there are exactly 16 possible pure covers of this integral hexagon $H$. They are enumerated in \Cref{fig:all16configurations}.
With this additional information, we intend to investigate which of these covers corresponds to the largest region in the semialgebraic set defined by the inequalities $a(\Vector{\eta})>0$ and $b(\Vector{\eta})<0$ (see Equation \eqref{eq:def-a-and-b}). These numbers are derived from the potentially negative coefficients of the bivariate polynomial $p_{\Vector{\eta},H}$ supported on the hexagon $H$ (see Equation \eqref{eq:polynomial_eta_H}).

\subsection{Experimental Setup}
\label{section:experimental-setup}
To investigate which of these 16 pure covers occupy the largest region of monostationarity, we draw uniformly distributed samples from hypercubes $[0,N]^{12}$ in the positive orthant having one vertex at the origin and side length $N>0$. To compare the region close to 0 with the behavior far away from 0, we run the experiments for $N=0.1,\,1,\,10,\,100$. 
For each collection of positive reaction rate constants from the semialgebraic set $a(\Vector{\eta}) > 0$ and $b(\Vector{\eta}) < 0$, we determine whether the circuit polynomial corresponding to each of the 16 pure covers is positive by relying on Theorem \ref{theorem_circuit_number}. In that case, multistationarity is not enabled (cf. Lemma \ref{lem:mono_nonnegativity}). The more points that are covered by this sufficient criterion, the better we understand the region of monostationarity in dual phosphorylation.

By drawing approximately $52\cdot 10^6$ samples from the region where $a(\Vector{\eta})> 0$ and $b(\Vector{\eta})<0$, we make sure that our numbers are statistically significant with a confidence level of $99\%$ and desired precision given by $\pm 0.005\%$ \cite{determiningsamplesizes}. The code for all experiments, along with the raw data, is available through the Julia package \texttt{CRNHexagon.jl}\footnote{The code is available in the repository \url{https://github.com/matthiashimmelmann/CRNHexagon}.}. For each hypercube $[0,N]^{12}$, each combination of reaction rate constants is compared across all 16 covers, making the results comparable. These experiments are run on a Windows 11 64-Bit machine with an Intel i7-10750H @ 2.6GHz processor and 16GB RAM. 

\subsection{The Best Pure Cover}
\label{section:the-best-circuit-cover}
For each of the 16 possible pure covers of the Newton polytope's hexagonal face $H$ (see \Cref{prop:16-circuit-covers}), we depict the ratio of points where monostationarity is enabled according to Lemma \ref{lem:mono_nonnegativity}  in Table \ref{tab:objectivecomparison}. The ratios are obtained by dividing the number of such samples by the number of total samples from the region where $a(\Vector{\eta})>0$ and $b(\Vector{\eta})<0$. In \Cref{tab:objectivecomparison}, the union of all sets corresponding to the covers $\mathcal{CC}(1)$ through $\mathcal{CC}(16)$ is denoted by ``$\Sigma$.''

\begin{table*}[h!]
    \centering
        \caption{\justifying The ratio of samples where the circuit polynomial corresponding to the cover labeled $n$ is positive divided by the total number of points drawn from $[0,N]^{12}$ intersected with the semialgebraic set defined by $a(\Vector{\eta})> 0$ and $b(\Vector{\eta})<0$. The rows corresponding to the five best-performing covers $\mathcal{CC}(4)$, $\mathcal{CC}(9)$, $\mathcal{CC}(10)$, $\mathcal{CC}(12)$ and $\mathcal{CC}(15)$ including the row corresponding to the cover used in Feliu et al. \cite{DualPhosphorSONC} have been marked in green and blue, respectively. In the top row, $\Sigma$ denotes the union of all semialgebraic sets corresponding to covers $\mathcal{CC}(1)$ through $\mathcal{CC}(16)$.}

            \label{tab:objectivecomparison}

\def\arraystretch{1.6}
\setlength\tabcolsep{1.61ex}
        \bgroup
\resizebox{0.525\textwidth}{!}{    \begin{tabular}{|c" c | c | c | c|}
 \multicolumn{1}{c"}{$\#$} & $[0,0.1]^{12}$& $~~[0,1]^{12}~$&$~[0,10]^{12}~$&$~[0,100]^{12}$\\  \thickhline
 ~&~&~&~&~\\[-5mm]
 ~$\Sigma$~~&0.98490&0.98490&0.98491&0.98492 \\[2mm] \hline
 $\mathcal{CC}(1)$&0.94937&0.94941&0.94937&0.94938\\ \hline
$\mathcal{CC}(2)$&0.96239&0.96237&0.96237&0.96239\\ \hline
$\mathcal{CC}(3)$&0.96510&0.96510&0.96510&0.96512\\ \hline
\cellcolor{celltwo!23}$\mathcal{CC}(4)$&\cellcolor{celltwo!23}0.97779&\cellcolor{celltwo!23}0.97779&\cellcolor{celltwo!23}0.97780&\cellcolor{celltwo!23}0.97780\\ \hline
$\mathcal{CC}(5)$&0.96236&0.96242&0.96238&0.96240\\ \hline
$\mathcal{CC}(6)$&0.96535&0.96536&0.96537&0.96538\\ \hline
$\mathcal{CC}(7)$&0.96535&0.96536&0.96537&0.96538\\ \hline
$\mathcal{CC}(8)$&0.97170&0.97170&0.97172&0.97172\\ \hline
\cellcolor{cellthree!23}$\mathcal{CC}(9)$&\cellcolor{cellthree!23}0.97851&\cellcolor{cellthree!23}0.97852&\cellcolor{cellthree!23}0.97852&\cellcolor{cellthree!23}0.97855\\ \hline
\cellcolor{celltwo!23}$\mathcal{CC}(10)$&\cellcolor{celltwo!23}0.97993&\cellcolor{celltwo!23}0.97994&\cellcolor{celltwo!23}0.97994&\cellcolor{celltwo!23}0.97996\\ \hline
$\mathcal{CC}(11)$&0.96236&0.96242&0.96238&0.96240\\ \hline
\cellcolor{celltwo!23}$\mathcal{CC}(12)$&\cellcolor{celltwo!23}0.97995&\cellcolor{celltwo!23}0.97994&\cellcolor{celltwo!23}0.97993&\cellcolor{celltwo!23}0.97996\\ \hline
$\mathcal{CC}(13)$&0.96239&0.96237&0.96237&0.96239\\ \hline
$\mathcal{CC}(14)$&0.94937&0.94941&0.94937&0.94938\\ \hline
\cellcolor{celltwo!23}$\mathcal{CC}(15)$&\cellcolor{celltwo!23}0.98310&\cellcolor{celltwo!23}0.98310&\cellcolor{celltwo!23}0.98312&\cellcolor{celltwo!23}0.98312\\ \hline
$\mathcal{CC}(16)$&0.97324&0.97322&0.97326&0.97325\\ \hline
\end{tabular}
}
    \egroup
\end{table*}

There are substantial variations in the quality of the covers, independent of the box size. The row corresponding to the highly symmetric cover $\mathcal{CC}(9)$ that is proposed in Feliu et al. \cite{DualPhosphorSONC} is marked blue and the cover does indeed perform well. Nevertheless, there are three covers consistently outperforming it: $\mathcal{CC}(10)$, $\mathcal{CC}(12)$ and $\mathcal{CC}(15)$. The investigation of all 16 pure covers thus leads to an increase in the size of the known region of monostationarity. For the readers' convenience, the rows of the four best candidates besides $\mathcal{CC}(9)$---namely $\mathcal{CC}(4)$, $\mathcal{CC}(10)$, $\mathcal{CC}(12)$ and $\mathcal{CC}(15)$---have been colored green in the table. Intriguingly, the two covers consisting only of one-dimensional simplices on average cover a larger region than the covers that also utilize triangles.

\Cref{tab:objectivecomparison} does not offer insight into the relations between the different regions where the 16 sums of circuit polynomials are nonnegative.

One might wonder to what extent these sets overlap. They could, in theory, form an ascending chain of inclusions, or, alternatively, their union might cover the entire region of monostationarity. However, a more realistic expectation is that the outcome lies somewhere between these two extremes.

\subsection{Relative Comparison of the Pure Covers}
\label{section:containment}
Since in Feliu et al. \cite{DualPhosphorSONC}, cover $\mathcal{CC}(9)$ is used for all symbolic computations surrounding the question of monostationarity, we think it is appropriate to choose it as a baseline here. In Table \ref{tab:relative+-comparison} from the Appendix \ref{section:appendix}, each cover is therefore compared with the highly symmetric cover $\mathcal{CC}(9)$ on the same set of samples that is used in Table \ref{tab:objectivecomparison}. 

The amount of samples where the new cover detected monostationarity, whereas $\mathcal{CC}(9)$ did not, is recorded with the symbol ``$+$.'' Conversely, the number of samples where cover $\mathcal{CC}(9)$ found that the corresponding polynomial is positive and the other cover did not is recorded with a ``$-$.'' Finally, whenever neither cover could discern monostationarity, that sample is recorded under ``$0$.''

From this data, we find that most covers are not equal. While there are some covers contained in the region corresponding to cover $\mathcal{CC}(9)$, as indicated by a ``0.0'' in the ``$+$'' row, most covers have at least a small region associated to them that is not covered by $\mathcal{CC}(9)$. Unsurprisingly, the previously best covers $\mathcal{CC}(4)$, $\mathcal{CC}(10)$, $\mathcal{CC}(12)$, and $\mathcal{CC}(15)$ from Table \ref{tab:objectivecomparison} have the highest ratio in the ``$+$'' row and the lowest in their ``$0$'' and ``$-$'' rows. Notably, all their ``$+$'' rows contain larger numbers than their ``$-$'' rows. This difference is most significant in the covers $\mathcal{CC}(10)$, $\mathcal{CC}(12)$ and $\mathcal{CC}(15)$. There, the ratio of samples where only the new cover was able to deduce monostationarity relative to the points where only the previous cover $\mathcal{CC}(9)$ is able to deduce monostationarity is approximately $36\%$, $36\%$ and $82\%$, respectively. 

We can infer from \Cref{tab:relative+-comparison} that most covers are not contained in the region of monostationarity provided by cover $\mathcal{CC}(9)$. Only the semialgebraic sets corresponding to the  covers $\mathcal{CC}(1)$, $\mathcal{CC}(6)$, $\mathcal{CC}(7)$ and $\mathcal{CC}(14)$ are contained in the one given by cover $\mathcal{CC}(9)$. As the pure covers are only compared to $\mathcal{CC}(9)$, which has been introduced by Feliu et al. \cite{DualPhosphorSONC}, \Cref{tab:relative+-comparison} does not offer a comparison between any other two covers. We generate these missing binary comparisons by considering all possible ${16 \choose 2}$ combinations of two covers. The goal is to get a complete picture of how the semialgebraic sets corresponding to the pure covers $\mathcal{CC}(1)$ to $\mathcal{CC}(16)$ relate to each other. 

Two pieces of information are recorded: Whether a point is contained in the region of monostationarity provided by cover $\mathcal{CC}(i)$ but not the other and whenever a point is present only in a single cover. If there are no points contained in a cover $A$ that are not also contained in a different cover $B$, we say that $A$ is \struc{contained} in $B$. Conversely, if this property additionally holds for $B$ relative to $A$, we conclude that their covered regions are \struc{equal}. \Cref{fig:graph-containment} depicts the containment information as a Hasse diagram, the standard way to portray posets. Whenever two covers are considered equal, their labels appear in the same vertex.

\begin{figure}[h!]
    \centering
    \includegraphics[width=0.55\linewidth]{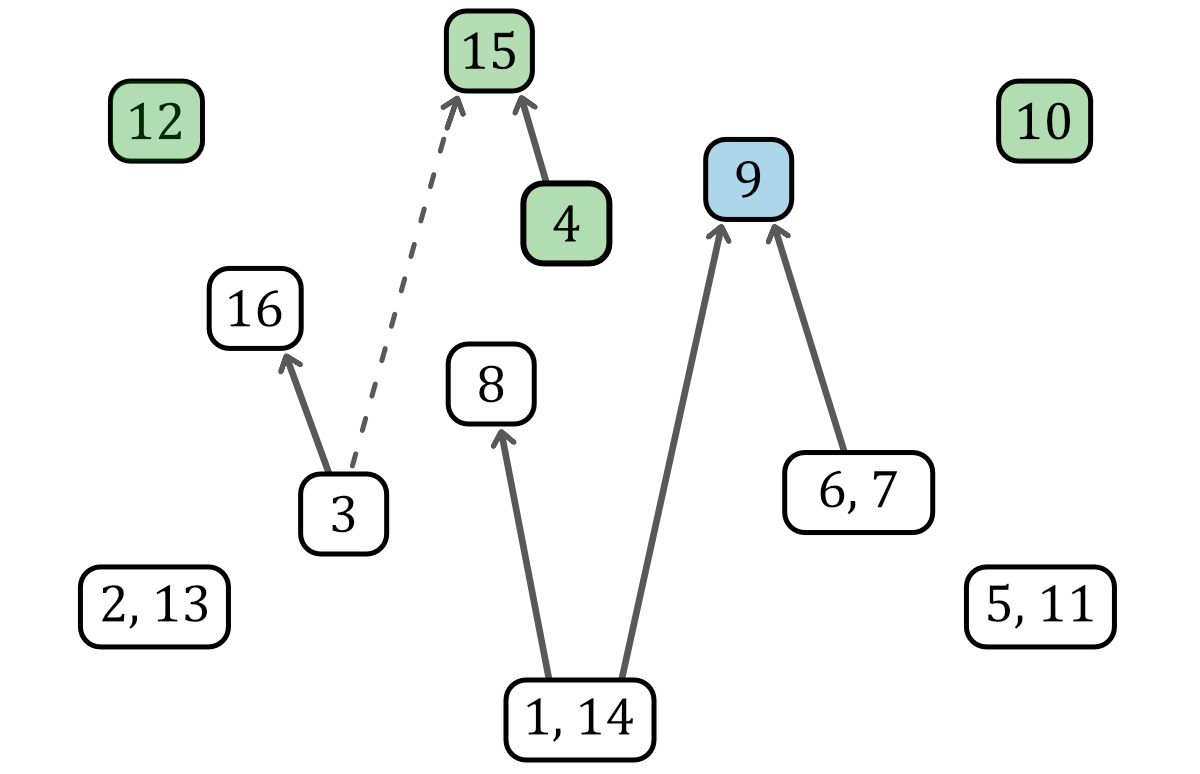}
    \caption{Hasse diagram of the covers' containment. If cover $A$ is contained in cover $B$, a directed edge $A\longrightarrow B$ is added. If additionally, $B$ is contained in $A$, we put ``$A,\,B$'' in the same vertex. We suspect that cover $\mathcal{CC}(3)$ is contained in cover $\mathcal{CC}(15)$ as well, since we only found a single point in $\mathcal{CC}(3)$ that $\mathcal{CC}(15)$ does not include. It is not unlikely that numerical inaccuracies produce this point. In this case, we mark the relation by a dashed arrow: $A\dashrightarrow B$. The covers' overall quality improves along an imagined $y$-axis (cf. \Cref{tab:objectivecomparison}). The labels $i$ in this diagram refer to pure cover $\mathcal{CC}(i)$.}
    \label{fig:graph-containment}
\end{figure}

Curiously, the fifth-best cover $\mathcal{CC}(4)$ is contained in the best cover $\mathcal{CC}(15)$. Therefore, a combination of $\mathcal{CC}(4)$ and $\mathcal{CC}(15)$ does not improve the cover's overall quality, even though Tables \ref{tab:objectivecomparison} and \ref{tab:relative+-comparison} would have suggested that. In addition, the only covers that contain points no other cover contains are $\mathcal{CC}(8)$, $\mathcal{CC}(9)$, $\mathcal{CC}(10)$, $\mathcal{CC}(12)$ and $\mathcal{CC}(15)$. While $\mathcal{CC}(8)$ contains a negligible amount of unique points, covers $\mathcal{CC}(9)$, $\mathcal{CC}(10)$, $\mathcal{CC}(12)$ and $\mathcal{CC}(15)$ for all box sizes $[0,N]$ each contain at least $0.01\%$ of the sampled points that no other cover contains. It is tempting to assume from the containment diagram in \Cref{fig:graph-containment}, that since maximal covers are ``better,'' they contain points that no other cover does. This is not true, and in fact, most pure covers are contained in the union of the other covers. Therefore, our focus going forward should be on the special covers $\mathcal{CC}(9)$, $\mathcal{CC}(10)$, $\mathcal{CC}(12)$ and $\mathcal{CC}(15)$, as they are the most distinct and provide the most variety, while objectively covering the largest regions, as well. 

\subsection{New Sufficient Conditions for Monostationarity}
\label{section:new-sufficient-conditions}
Based on the empirical results above we have identified the covers $\mathcal{CC}(4)$, $\mathcal{CC}(10)$, $\mathcal{CC}(12)$ and $\mathcal{CC}(15)$, along with cover $\mathcal{CC}(9)$ proposed by Feliu et al. \cite{DualPhosphorSONC}, to be the best pure covers to describe the region of monostationarity via SONC decompositions.
 Each of the covers gives a sufficient condition for monostationarity for a given set of parameters $\Vector{\eta}$.
 We have displayed the condition corresponding to cover $\mathcal{CC}(9)$ as derived in \cite[Thm. 3.5]{DualPhosphorSONC} in Example \ref{ex:one_circuit_in_hexagon}.
 Now, we provide new sufficient conditions for monostationarity that correspond to the best performing covers $\mathcal{CC}(4)$, $\mathcal{CC}(10)$, $\mathcal{CC}(12)$ and $\mathcal{CC}(15)$.
\begin{thm}
\label{cor:newsufficientconditions}
Let the set of parameters $\Vector{\eta}$ be given as in \eqref{eq:eta} and the polynomial $p_{\Vector{\eta}, H}$ be given as in \eqref{eq:polynomial_eta_H}. 
Assume $a(\Vector{\eta})>0$ and $b(\Vector{\eta})<0$.
It holds that $p_{\Vector{\eta}, H}$ is nonnegative and thus the CRN is monostationary if $\Vector{\eta}$ fulfills one of the following conditions:
\begin{align*}
-b(\Vector{\eta})~ &\leq~ 
    4(K_1K_4\kappa_6\kappa_9a(\Vector{\eta}))^{\frac{1}{2}}
    +2(K_2K_3\kappa_3\kappa_{12}a(\Vector{\eta}))^{\frac{1}{2}}
    \tag{$\mathcal{CC}(15)$}\\
    &+ 3(K_1K_2K_3K_4\kappa_3\kappa_6^2\kappa_9^2\kappa_{12})^{\frac{1}{3}}(K_2^{-1}(K_2K_4)^{\frac{1}{3}}+K_3^{-1}(K_1K_3)^{\frac{1}{3}})\\[1em]
-b(\Vector{\eta})~ &\leq ~
    4(K_1K_2K_3K_4\kappa_3\kappa_6\kappa_9\kappa_{12})^{\frac{1}{4}}
     (2a(\Vector{\eta}))^{\frac{1}{2}}\tag{$\mathcal{CC}(4)$}\\
    &+ 3(K_1K_2K_3K_4\kappa_3\kappa_6^2\kappa_9^2\kappa_{12})^{\frac{1}{3}}(K_2^{-1}(K_2K_4)^{\frac{1}{3}}+K_3^{-1}(K_1K_3)^{\frac{1}{3}})\\[1em]
-b(\Vector{\eta})~ &\leq ~
     4K_2^{-1}(K_1^2K_2^3K_3K_4^2\kappa_3\kappa_6^2\kappa_9^2\kappa_{12}a(\Vector{\eta}))^{\frac{1}{4}}
     +3(K_1K_4^2\kappa_6^2\kappa_9^2a(\Vector{\eta}))^{\frac{1}{3}}
     \tag{$\mathcal{CC}(10)$}\\
     &+ 2(K_2K_3\kappa_3\kappa_{12}a(\Vector{\eta}))^{\frac{1}{2}}
     + 3K_3^{-1}(K_1^2K_2K_3^2K_4\kappa_3\kappa_6^2\kappa_9^2\kappa_{12})^{\frac{1}{3}}\\[1em]
-b(\Vector{\eta})~ &\leq ~
     4K_3^{-1}(K_1^2K_2K_3^3K_4^2\kappa_3\kappa_6^2\kappa_9^2\kappa_{12}a(\Vector{\eta}))^{\frac{1}{4}}
     +3(K_1^2K_4\kappa_6^2\kappa_9^2a(\Vector{\eta}))^{\frac{1}{3}}
     \tag{$\mathcal{CC}(12)$}\\
     &+ 2(K_2K_3\kappa_3\kappa_{12}a(\Vector{\eta}))^{\frac{1}{2}}
     +3K_2^{-1}(K_1K_2^2K_3K_4^2\kappa_3\kappa_6^2\kappa_9^2\kappa_{12})^{\frac{1}{3}}
    \end{align*}
\end{thm}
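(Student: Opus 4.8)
The plan is to obtain each of the four inequalities as a concrete instance of the circuit-cover criterion \eqref{eq:circuit_cover_inequ}. By \cite[Prop. 2.11]{DualPhosphorSONC}, under the standing assumption $a(\Vector{\eta})>0$ the CRN is monostationary as soon as $p_{\Vector{\eta},H}$ is nonnegative for all $x_1,x_3>0$, so it suffices to exhibit, for each listed cover, a decomposition of $p_{\Vector{\eta},H}$ into nonnegative circuit polynomials sharing only the negative point $\Vector{m}=(2,1)$. Since each of $\mathcal{CC}(4)$, $\mathcal{CC}(10)$, $\mathcal{CC}(12)$, $\mathcal{CC}(15)$ is one of the pure covers classified in \Cref{prop:16-circuit-covers}, every vertex of the hexagon is assigned to exactly one simplex, so the positive coefficient at each such vertex enters a single circuit number undiminished and only the coefficient $b(\Vector{\eta})K_1K_2K_3\kappa_3\kappa_6\kappa_{12}$ at $\Vector{m}$ is split among the circuit polynomials. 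By \Cref{theorem_circuit_number}, choosing the split optimally certifies nonnegativity exactly when $-b(\Vector{\eta})K_1K_2K_3\kappa_3\kappa_6\kappa_{12}\le\sum_i\Theta_i$, the sum running over the simplices of the cover. Dividing through by the positive factor $K_1K_2K_3\kappa_3\kappa_6\kappa_{12}$ then produces the stated form $-b(\Vector{\eta})\le\cdots$.

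The rest is purely computational. First I would record the coefficient of $p_{\Vector{\eta},H}$ attached to each of the ten labelled points in \eqref{eq:hexagon-coordinates}, reading them directly off \eqref{eq:polynomial_eta_H}. Next, for each cover I would read its constituent simplices from \Cref{fig:all16configurations} and, for every simplex, solve the small linear system $\Vector{m}=\sum_i\lambda_i\Vector{\alpha}(i)$, $\sum_i\lambda_i=1$, for the barycentric coordinates of $\Vector{m}$. The one-dimensional simplices give either the midpoint weights $(\tfrac12,\tfrac12)$ or, on the middle row, the weights $(\tfrac13,\tfrac23)$; the triangles give weights $(\tfrac14,\tfrac14,\tfrac12)$ or $(\tfrac13,\tfrac13,\tfrac13)$. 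Substituting these weights and the recorded coefficients into the circuit-number formula \eqref{eq:circuit-number} yields each $\Theta_i$ as a monomial in the $K_j,\kappa_j$ times a fractional power of $a(\Vector{\eta})$; summing and dividing by $K_1K_2K_3\kappa_3\kappa_6\kappa_{12}$ reproduces the four displayed right-hand sides.

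Two structural observations streamline this bookkeeping and guard against exponent errors. The hexagon carries a central symmetry through $\Vector{m}$ pairing $\Vector{\alpha_1}\leftrightarrow\Vector{\alpha_4}$, $\Vector{\alpha_2}\leftrightarrow\Vector{\alpha_5}$, $\Vector{\alpha_3}\leftrightarrow\Vector{\alpha_6}$, $\Vector{\beta_1}\leftrightarrow\Vector{\beta_2}$ and $\Vector{\iota_1}\leftrightarrow\Vector{\iota_2}$; under it $\mathcal{CC}(12)$ is the mirror image of $\mathcal{CC}(10)$, so those two computations are identical up to a relabelling of parameters, and within $\mathcal{CC}(4)$ the two horizontal triangles turn out to contribute equal circuit numbers. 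Moreover several simplices recur across covers: the midpoint segment $\{\Vector{\beta_1},\Vector{\beta_2}\}$ yields $2(K_2K_3\kappa_3\kappa_{12}a(\Vector{\eta}))^{1/2}$ in $\mathcal{CC}(10)$, $\mathcal{CC}(12)$ and $\mathcal{CC}(15)$, and the middle-row segments $\{\Vector{\alpha_6},\Vector{\iota_2}\}$, $\{\Vector{\iota_1},\Vector{\alpha_3}\}$ reproduce the two cube-root summands already appearing in $\mathcal{CC}(15)$. Hence each individual $\Theta_i$ need be evaluated only once.

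The main obstacle is not conceptual but the disciplined simplification of the products of fractional powers. Two sources of hidden constants must be tracked carefully: the prefactor $\prod_i\lambda_i^{-\lambda_i}$ from \eqref{eq:circuit-number} (which is $2$, $2\sqrt2$ or $3$ for the weights above), and the explicit factor $2$ in the coefficients of $p_{\Vector{\eta},H}$ at $\Vector{\iota_1}$ and $\Vector{\iota_2}$; together these conspire to turn raw prefactors such as $2\sqrt2$ into the clean integer coefficients $4$, $3$, $2$ visible in the statement. Keeping the exponents of every $K_j$, $\kappa_j$ and of $a(\Vector{\eta})$ exact while collecting terms is where slips are most likely, so I would finish by re-expanding each displayed right-hand side and checking it term-by-term against the sum of the individually computed circuit numbers.
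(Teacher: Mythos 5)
Your proposal is correct and follows essentially the same route as the paper, whose proof simply invokes Lemma~\ref{lem:mono_nonnegativity}, Theorem~\ref{theorem_circuit_number} and the SONC decompositions given by the pure covers $\mathcal{CC}(15)$, $\mathcal{CC}(4)$, $\mathcal{CC}(10)$ and $\mathcal{CC}(12)$. You merely spell out the circuit-number computations (barycentric weights, normalization by $K_1K_2K_3\kappa_3\kappa_6\kappa_{12}$, the cancellation of the factor $2$ at $\Vector{\iota_1},\Vector{\iota_2}$ against $\prod_i\lambda_i^{-\lambda_i}$) that the paper leaves implicit, and these details check out.
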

\begin{proof}
    The monostationarity follows from Lemma \ref{lem:mono_nonnegativity}. We certify the nonnegativity of $p_{\Vector{\eta}, H}$ via Theorem \ref{theorem_circuit_number} and the SONC decompositions into the pure covers $\mathcal{CC}(15),\; \mathcal{CC}(4),\; \mathcal{CC}(10) \text{ and } \mathcal{CC}(12)$ respectively. 
\end{proof}
 
This result emphasizes the similarity of the covers $\mathcal{CC}(10)$ and $\mathcal{CC}(12)$. By swapping $K_2$ and $K_3$ in the first term, $K_1$ and $K_4$ in the second term as well as $K_2$ and $K_3$ in the last term, we can transform the sum of circuit numbers corresponding to cover $\mathcal{CC}(10)$ into the sum of circuit numbers of $\mathcal{CC}(12)$. In fact, the Michaelis-Menten constants defined in Equation \eqref{eq:Michaelis-Menten} can be treated as free parameters, as each $K_i$ is defined in terms of two independent parameters and one dependent parameter out of $\{\kappa_3,\kappa_6,\kappa_9,\kappa_{12}\}$. Since the dependent parameters have equal exponents in the relevant terms, the sums of circuit numbers associated with the pure covers $\mathcal{CC}(10)$ and $\mathcal{CC}(12)$ behave equivalently. This fact is further highlighted through a mirror symmetry between these covers in \Cref{fig:all16configurations} and the fact that these covers show exactly the same empirical results in Tables \ref{tab:objectivecomparison} and \ref{tab:relative+-comparison}.
\subsection{Weighted Covers}
\label{sec:weighted}

In the previous sections, only pure covers are considered, meaning that every vertex with a nonnegative coefficient is used in exactly one of the corresponding circuit polynomials. Nevertheless, this is not the only way of decomposing a polynomial into a SONC. As mentioned in Section \ref{section:dual-phosphorylation}, by weighing the vertices' coefficients, multiple covers can be combined to form \struc{weighted covers}. When computing the sum of circuit numbers for a specific cover then it is a priori not clear whether it is optimal. As laid out in \Cref{section:SONC} there exist different approaches to numerically find an optimal lower bound the minimum of a polynomial via SONC \cite{dualityofnonnegativecircuits} and SAGE \cite{Chandrasekaran:Shah:SAGE, Chandrasekaran:Murray:Wiermann} certificates. For symbolic coefficients, as is the case here, no such algorithm exists. We showed in \Cref{section:the-best-circuit-cover} that solely considering pure covers does not suffice for finding the optimal cover. By enlarging the search space to include weighted covers, it is conceivable that we find a better cover. To obtain an intuition for weighted covers, we give an example expanding Example \ref{exampleSONC}.
\begin{eg}
\label{exampleSONCWeighted}
    We add the constant term $1$ to the polynomial investigated in Example \ref{exampleSONC} and consider $g(x,y) = \struc{x^4y^2} + x^2 + y + 1 - cx^2y$ with $c \in \mathbb{R},\, c \neq 0$.
    Figure \ref{Fig:Ex:NewtonPolytopeWeighted_new} shows the only possible decomposition into a weighted cover of the Newton polytope of $p$. 
    \begin{figure}[h!]
    \centering
    \includegraphics[width=0.31\linewidth]{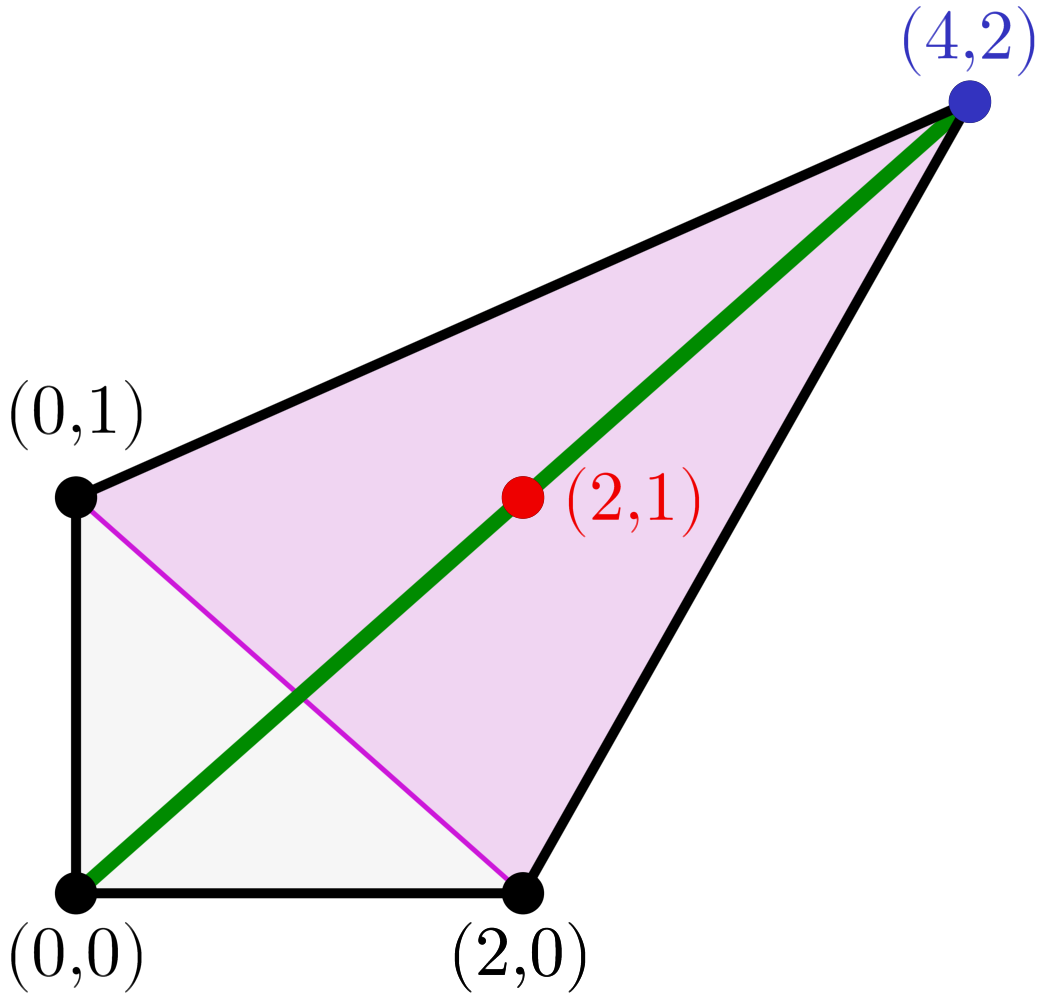}
    \caption{Example of a weighted cover of the Newton polytope corresponding to the polynomial $g$. The vertex $(4,2)$ occurs in two circuit polynomials. }
    \label{Fig:Ex:NewtonPolytopeWeighted_new}
\end{figure}
    There is no other way to decompose the Newton Polytope into simplices that contain the point $(2,1)$ in the interior and use all vertices.
    We want to certify the nonnegativity of $p$ depending on $c$ with this weighted cover.
    While we already considered the first circuit polynomial $f_1$ with support $\{(4,2), (2,0), (0,1), (2,1)\}$ in Example \ref{exampleSONC}, the second circuit polynomial $f_2$ with support $\{(0,0), (4,2), (2,1)\}$ is new.
    Since the vertex corresponding to the monomial $\struc{x^4y^2}$ now appears in both $f_1$ and $f_2$ we split its coefficient among both circuit polynomials via a weight $ w_{(4,2)}~\in~(0,1)$. 
    We partition the polynomial $g$ into $g(x,y)~=~ (\struc{w_{(4,2)}\cdot x^4y^2} + x^2 + y) + (\struc{(1-w_{(4,2)})\cdot x^4y^2} + 1) - cx^2y$.
 The circuit numbers are
    \begin{align*}
        \Theta_{f_1} = \Big(\frac{\struc{w_{(4,2)}}}{\nicefrac{1}{3}}\Big)^{\nicefrac{1}{3}}\Big(\frac{1}{\nicefrac{1}{3}}\Big)^{\nicefrac{1}{3}}\Big(\frac{1}{\nicefrac{1}{3}}\Big)^{\nicefrac{1}{3}} \text{ and } ~
        \Theta_{f_2} =\Big(\frac{\struc{(1-w_{(4,2)})}}{\nicefrac{1}{2}}\Big)^{\nicefrac{1}{2}}\Big(\frac{1}{\nicefrac{1}{2}}\Big)^{\nicefrac{1}{2}}.
    \end{align*}
    Maximizing the sum $\Theta_{f_1} + \Theta_{f_2}$ for $w_{(4,2)} \in (0,1)$, we determine the optimal weight $w_{(4,2)}\approx 0.5497$. 
    With that we can certify using Theorem \ref{theorem_circuit_number}, that $g$ is nonnegative on the positive orthant if
    \begin{align*}
        c \leq \Theta_{f_1} + \Theta_{f_2} \approx 3.7996.
    \end{align*} 
\end{eg}

Inspired by this example, we use the approach of weighted covers to investigate combinations of the pure covers $\mathcal{CC}(1)$ to $\mathcal{CC}(16)$ 
that potentially have an enlarged region of monostationarity. However, using the empirical approach from \Cref{section:empirical data} to scan the entire 15-dimensional search space with sufficiently fine discretization for a weighted cover is infeasible. Since the covers $\mathcal{CC}(4)$, $\mathcal{CC}(9)$, $\mathcal{CC}(10)$, $\mathcal{CC}(12)$, and $\mathcal{CC}(15)$ perform significantly better than the rest in both metrics that we computed (see \Cref{tab:objectivecomparison} and \ref{tab:relative+-comparison}), it seems likely that the best cover will be some combination of these five. Therefore, this section will focus on examining whether this hypothesis is true. 

Before addressing how to sample the space of weighted covers, we first recall the definition from \Cref{section:dual-phosphorylation}. For that, let us consider an integral polytope $\mathcal{P}\subset \mathbb{R}^2$ arising as the Newton polytope $\mathcal{N}(g)$ of a polynomial $g$ that is supported on $V=\text{supp}(g)$. Assume we are given $s$ pure covers $\Sigma_1,\dots,\Sigma_s$ of $\mathcal{P}$. Each vertex covered by a simplex in $\Sigma_i$ then receives a weight $w_i: V\rightarrow [0,1]$ so that the weights on each of $\mathcal{P}$'s vertices sum to 1. Therefore, each vertex comes with one linear constraint, highlighting that this decomposition has at least one solution given by one of the original pure covers.

Subsequently, we can calculate the circuit number (see Equation (\ref{eq:circuit-number})) for each of the circuit polynomials arising from the decomposition dictated by $\Sigma_i$. In the case of weighted covers, we split the coefficients by multiplying them with the respective weights $\omega_i$ (see \cite{constrainedoptimizationapproach}). By construction, the coefficient of each vertex is taken into account exactly with a factor of 1. To make this concept clear, consider the following example of finding the optimal weighted cover that is a combination of covers $\mathcal{CC}(4)$ and $\mathcal{CC}(9)$.

\begin{eg}
\label{ex:homotopy-from-cover-4-to-12}
    Consider the covers $\mathcal{CC}(4)$ and $\mathcal{CC}(9)$. In this example, we intend to investigate their intermediate weighted covers. After weighing each vertex the optimal cover that is a combination of $\mathcal{CC}(4)$ and $\mathcal{CC}(9)$ can be expressed as the weighted sum of circuit numbers
\begin{eqnarray}
\label{eq:theta-4-9}
    \Theta(4,9; \Vector{w}) &=& \frac{3}{\sqrt[3]{4}}\sqrt[3]{w_{4,\Vector{\alpha_6}}c_{\Vector{\alpha_6}}\cdot (w_{4,\Vector{\iota_2}}c_{\Vector{\iota_2}})^2}
        \,+\,\frac{3}{\sqrt[3]{4}}\sqrt[3]{(w_{4,{\Vector{\iota_1}}}c_{\Vector{\iota_1}})^2\cdot w_{4,{\Vector{\alpha_3}}}c_{\Vector{\alpha_3}}}\nonumber\\
       &~&+\,\sqrt{8}\sqrt[4]{(w_{4,{\Vector{\beta_1}}}c_{\Vector{\beta_1}})^2\cdot w_{4,{\Vector{\alpha_5}}}c_{\Vector{\alpha_5}}\cdot w_{4,\Vector{\alpha_4}} c_{\Vector{\alpha_4}}}+\sqrt{8}\sqrt[4]{w_{4,\Vector{\alpha_1}}c_{\Vector{\alpha_1}}\cdot w_{4,\Vector{\alpha_2}}c_{\Vector{\alpha_2}}\cdot (w_{4,{\Vector{\beta_2}}}c_{\Vector{\beta_2}})^2}\nonumber\\
        &~&+\, 2\sqrt{(1-w_{4,\Vector{\beta_1}})c_{\Vector{\beta_1}}\cdot (1-w_{4,\Vector{\beta_2}})c_{\Vector{\beta_2}}}\,+\, 2\sqrt{(1-w_{4,{\Vector{\iota_1}}})c_{\Vector{\iota_1}}\cdot (1-w_{4,{\Vector{\iota_2}}})c_{\Vector{\iota_2}}}\nonumber\\
        &~&+\,3\sqrt[3]{(1-w_{4,\Vector{\alpha_1}})c_{\Vector{\alpha_1}}\cdot (1-w_{4,{\Vector{\alpha_3}}})c_{\Vector{\alpha_3}}\cdot (1-w_{4,\Vector{\alpha_5}})c_{\Vector{\alpha_5}}}\\   
        &~&+\, 3\sqrt[3]{\,(1-w_{4,{\Vector{\alpha_2}}})c_{\Vector{\alpha_2}}\cdot (1-w_{4,\Vector{\alpha_6}})c_{\Vector{\alpha_6}}\cdot (1-w_{4,{\Vector{\alpha_4}}}) c_{\Vector{\alpha_4}}}\nonumber\\
   &\text{s.t.} &\;0\,\leq \,w_{4,{\Vector{\alpha_6}}},\, w_{4,{\Vector{\iota_1}}},\, w_{4,{\Vector{\iota_2}}},\, w_{4,{\Vector{\alpha_3}}},\, w_{4,{\Vector{\beta_1}}},\, w_{4,{\Vector{\alpha_5}}},\, w_{4,{\Vector{\alpha_4}}},\, w_{4,\Vector{\alpha_1}},\, w_{4,\Vector{\alpha_2}},\, w_{4,{\Vector{\beta_2}}}\, \leq\, 1\nonumber
\end{eqnarray}
for the relevant coefficients $c_{\Vector{v}}$ corresponding to vertex $\Vector{v}$ that are defined in \Cref{section:dual-phosphorylation}. Here,  the weight $w_{4,\Vector{v}}$ corresponds to cover $\mathcal{CC}(4)$ and vertex $\Vector{v}$. We obtain the expression $\Theta(4,9;\Vector{w})$ by replacing the coefficients $c_{\Vector{v}}$ in the sum of circuit numbers corresponding to the cover $\mathcal{CC}(i)$ for $i\in \{4,9\}$ by the term $w_{i,\Vector{v}}\cdot c_{\Vector{v}}$ such that $w_{4,\Vector{v}}+w_{9,\Vector{v}}=1$. Rewriting this linear equality simplifies to $w_{9,\Vector{v}}=1-w_{4,\Vector{v}}$. In this scenario, $\Theta(4)$ is exactly given by $\Theta(4,9; \Vector{0})$ and $\Theta(9)$ is equal to $\Theta(4,9; \Vector{1})$.

    \begin{figure}[h!]
        \centering
        \hfill\includegraphics[width=0.92\linewidth]{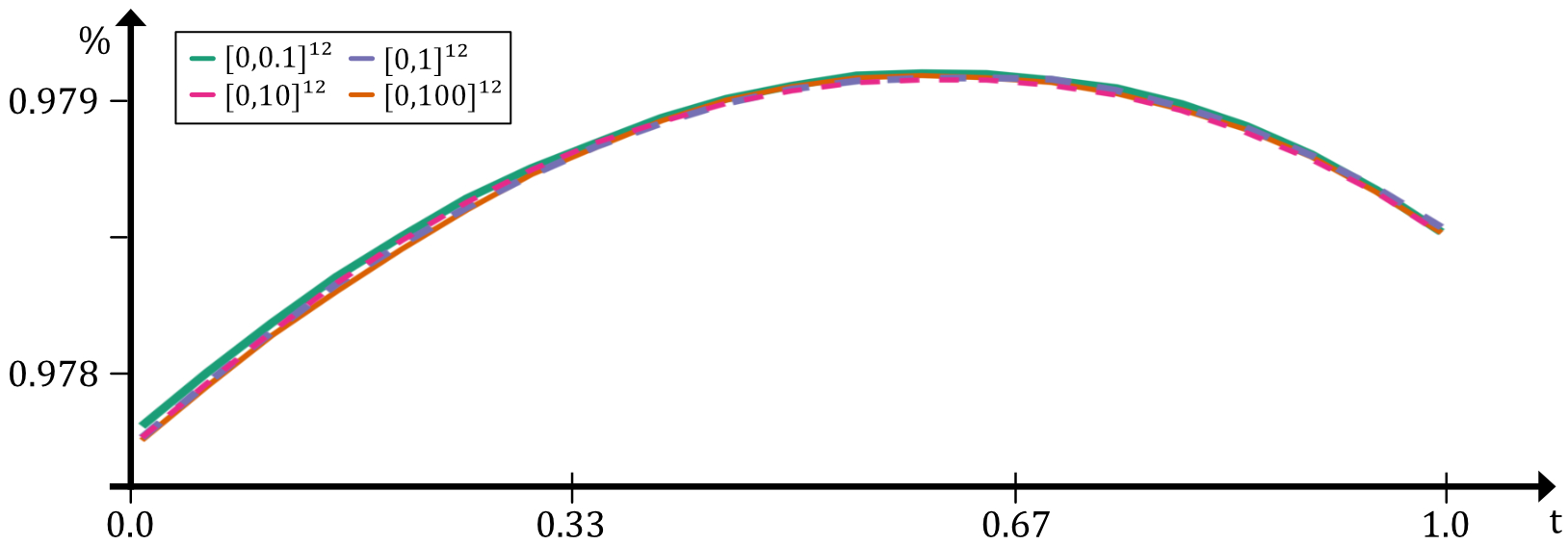}\\[2mm]
        
        \includegraphics[width=0.235\linewidth]{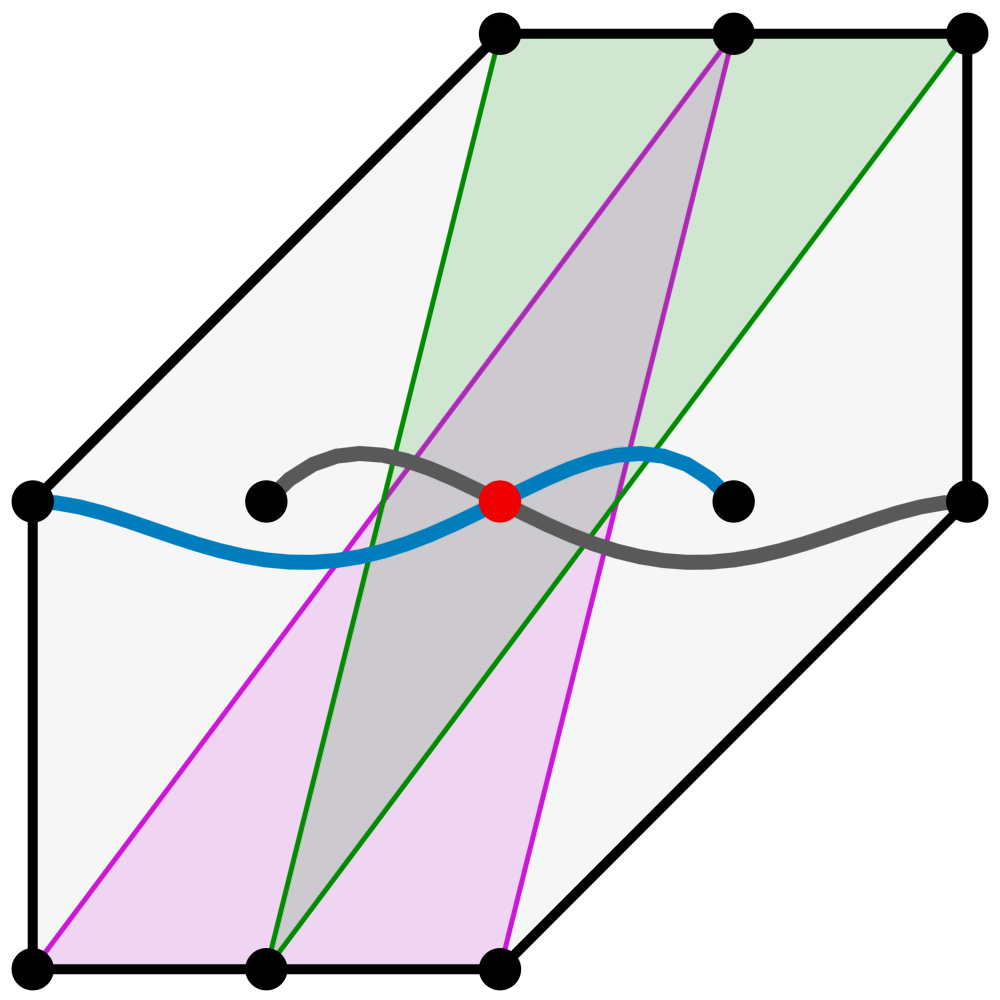}\hfill
        \includegraphics[width=0.235\linewidth]{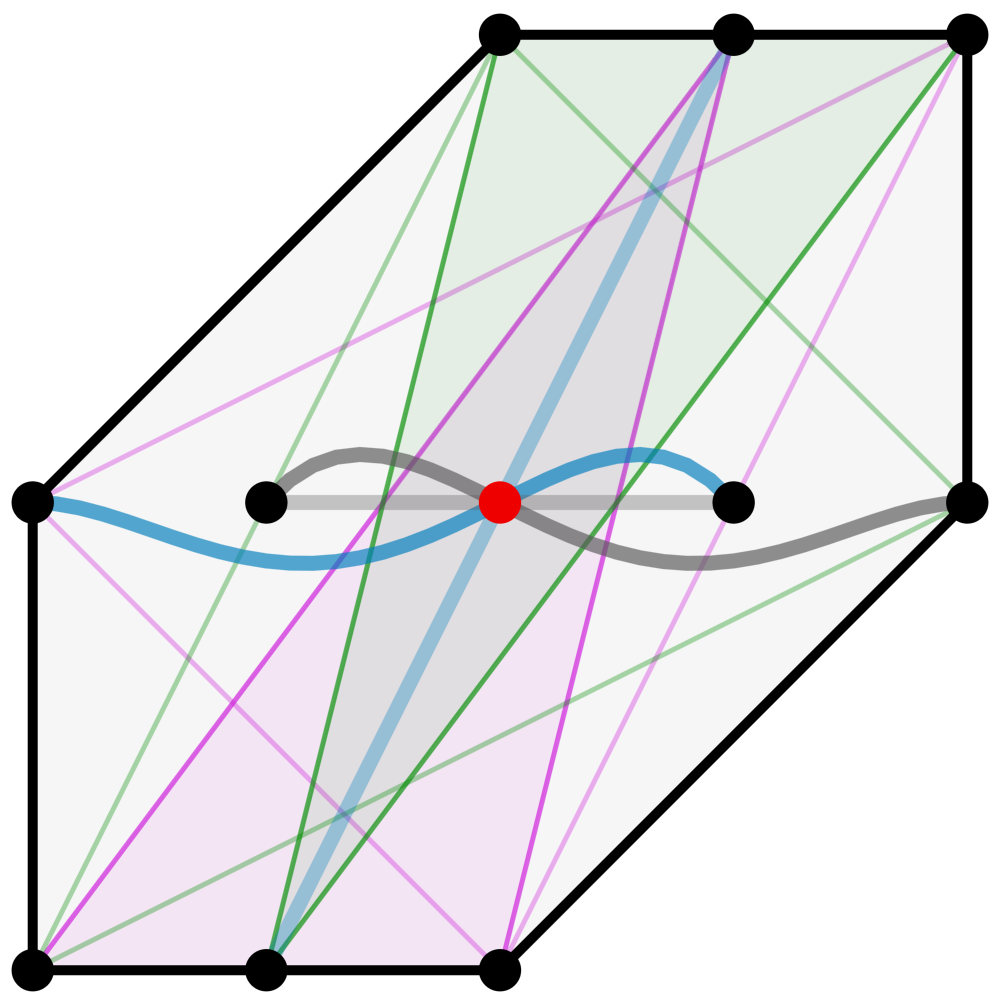}
        \hfill
        \includegraphics[width=0.235\linewidth]{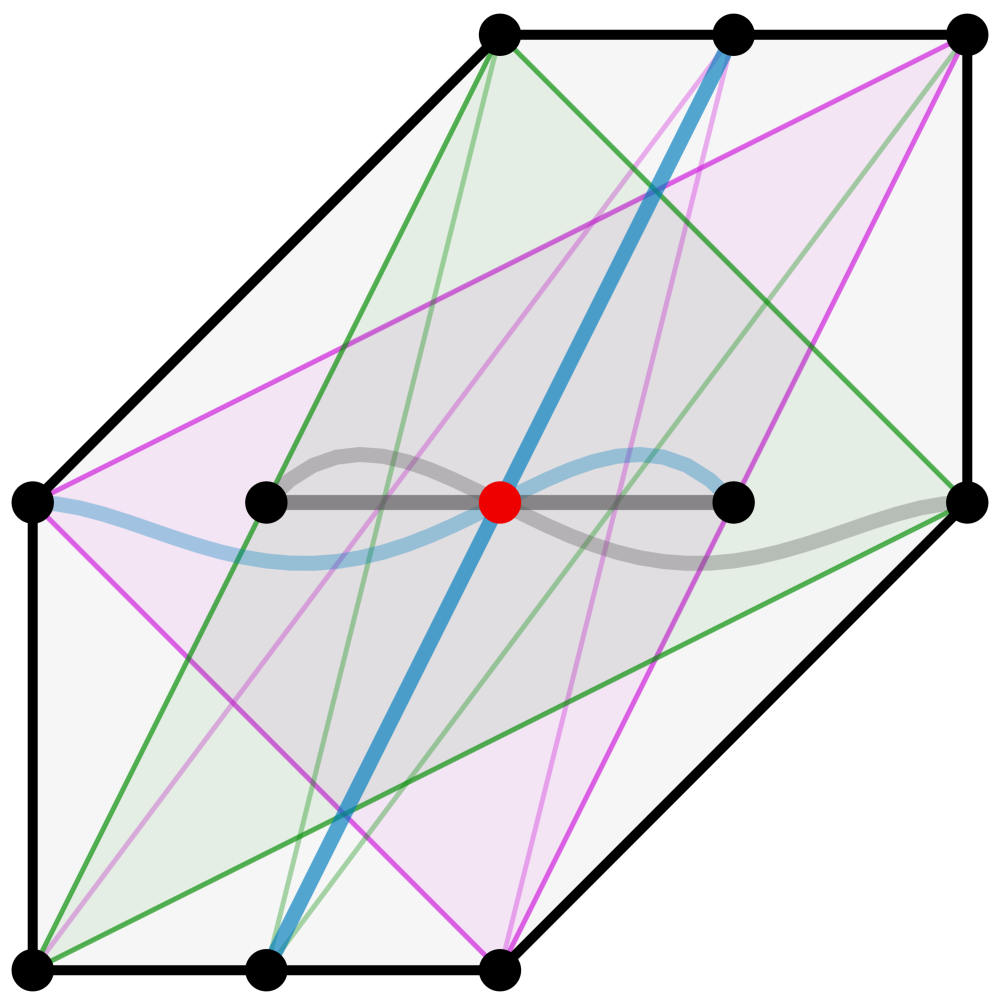}
        \hfill
        \includegraphics[width=0.235\linewidth]{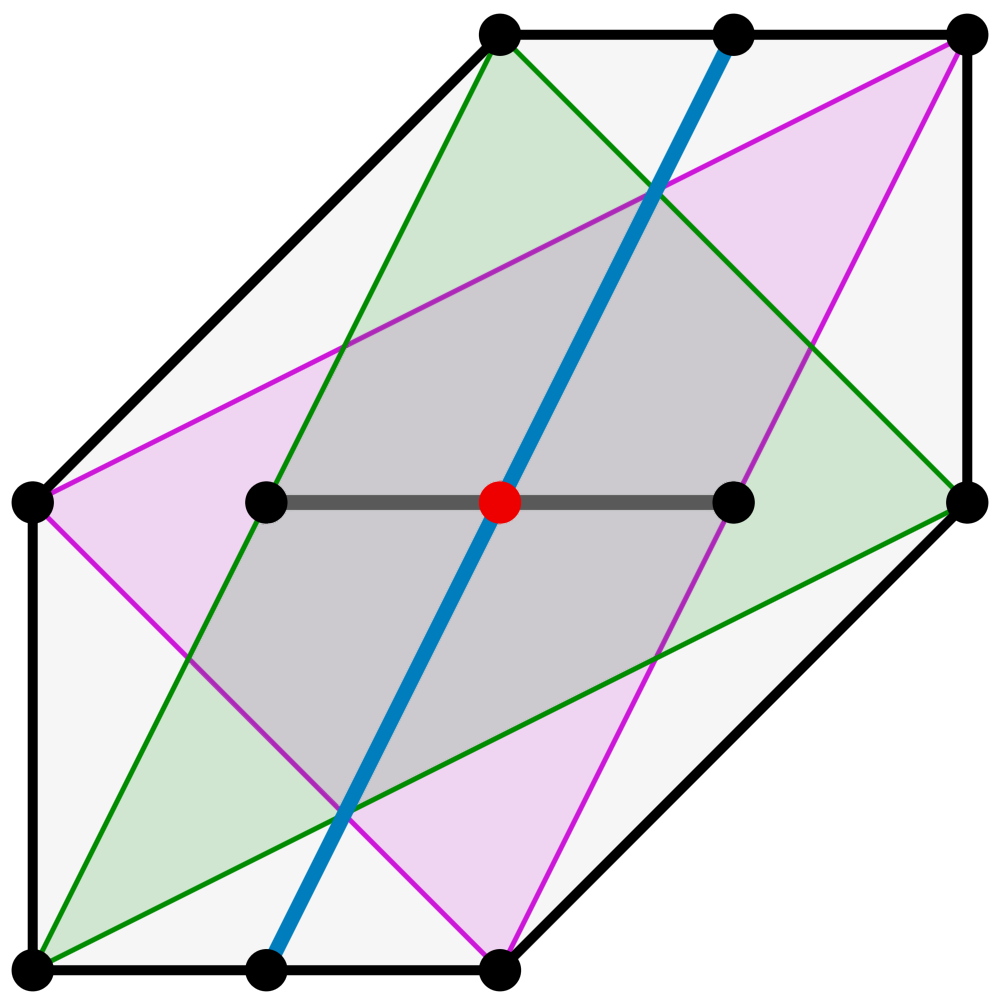}\\[2mm]
        \caption{Homotopy from cover $\mathcal{CC}(4)$ to cover $\mathcal{CC}(9)$, parametrized by $t\in [0,1]$ and discretized with constant step size $\Delta t=0.05$. Each simplex' contribution to the mixed cover is weighted by $t$ and $1-t$, respectively. The graph shows the ratio of samples corresponding to the weighted cover $H_{4,9}(t)$, where monostationarity is enabled (cf. Table \ref{tab:objectivecomparison}).}
        \label{fig:trianglehomotopy}
    \end{figure}
    Yet, optimizing this expression is infeasible, since we are dealing with symbolic coefficients $c_{\Vector{v}}$. For this example and for visualization purposes, we set all weights to be equal: $w_{4,v_i}=w_{4,v_j}$ for all $i,j$. Therefore, computing an intermediate cover reduces to weighing the coefficients in the covers $\mathcal{CC}(4)$ and $\mathcal{CC}(9)$ by $(1-t)$ and $t$, respectively, for $t\in [0,1]$. This lets us define a homotopy parametrized by $t\in[0,1]$ from cover $\mathcal{CC}(4)$ to $\mathcal{CC}(9)$. \Cref{fig:trianglehomotopy} depicts four configurations from this homotopy. 
    Since each coefficient is weighted equally, the homotopy can easily be lifted to a homotopy of the sums of circuit numbers of the form 
    \[H_{4,9}(t) = (1-t)\cdot\Theta(4)+t\cdot \Theta(9).\]
    We can prove this claim by considering the expression for  $\Theta(4,9;\Vector{w})$ in Equation \eqref{eq:theta-4-9}. Replacing $w_{4,\Vector{v}}$ by $(1-t)$ with $t\in [0,1]$ for all vertices $\Vector{v}$, we obtain the exact power of $(1-t)$ necessary to cancel out the radicals. In fact, this observation generalizes to arbitrary sums of circuit numbers in which all coefficients are weighted equally. When considering the expression for a circuit number from Equation \eqref{eq:circuit-number} weighted by $(1-t)$ given by
    \[\Theta_p = \prod_{i=0}^{r}\left(\frac{(1-t)\cdot c_{\Vector{v}}}{\lambda_i}\right)^{\lambda_{i}}\]
    for each circuit polynomial $p$ of $\mathcal{CC}(4)$ with $\sum_i\lambda_i=1$, we find that $(1-t)^{\lambda_i}$ can be factored out of each term in the product so that $\prod_i (1-t)^{\lambda_i} = (1-t)^{\left(\sum_i \lambda_i\right)}=(1-t)$.
    By \Cref{theorem_circuit_number}, the bivariate polynomial $p_{\Vector{\eta},H}$ supported on the hexagon is nonnegative if 
    this expression satisfies $H_{4,9}(t)\geq -c_{\Vector{m}}$. The graph in Figure \ref{fig:trianglehomotopy}, obtained by sampling $23\cdot 10^6$ points from the region where $a(\Vector{\eta})>0$ and $b(\Vector{\eta})<0$, highlights that it is indeed possible to obtain an improved cover by weighting the coefficients. At $t=0.6$, the ratio equals $0.97907$ (cf. Table \ref{tab:objectivecomparison}). While this value still lies below the ratio corresponding to $\mathcal{CC}(10)$, $\mathcal{CC}(12)$ and $\mathcal{CC}(15)$, it signifies an improvement of $\mathcal{CC}(4)$ and $\mathcal{CC}(9)$. 
\end{eg}
This example provides a general idea of how to investigate weighted covers and demonstrates that we can expect an improved sufficient condition from weighting. In the following, we consider the best five pure covers labeled by $\mathcal{CC}(4)$, $\mathcal{CC}(9)$, $\mathcal{CC}(10)$, $\mathcal{CC}(12)$ and $\mathcal{CC}(15)$. Among them are the pure covers with the highest symmetric differences (see Section \ref{section:containment}). We combine them into the $5 \choose 3$ subsets of three weighted covers. To simplify the computations, we choose one weight $w_1,~w_2$, and $w_3$ from $[0,1]$ for each cover analogously to \Cref{ex:homotopy-from-cover-4-to-12}. Again, we assume that $w_1+w_2+w_3=1$. This defines simplicial homotopies between the covers, parametrized by $s\in [0,1]$ and $t\in [0,s]$ such that $w_1=s$, $w_2=t$ and $w_3=1-s-t$:
\[H_{a,b,c}(s,t)~=~s\cdot \Theta(a)+t\cdot \Theta(b)+(1-s-t)\cdot \Theta(c)~~~~~\text{for}~~~a,b,c\in \{4,9,10,12,15\}\] with $\#\{a,b,c\}=3$. In this case, the complexity of the sampling grows quadratically with the chosen discretization, so we only draw $17\cdot 10^6$ samples. Since the performance of each cover is almost identical across all hypercubes $[0,N]^{12}$ (see Tables \ref{tab:objectivecomparison} and \ref{tab:relative+-comparison}), only $N=1$ is considered here. Each simplex parametrized by $s,t$ is subsequently discretized with constant grid sizes $\Delta t=\Delta s=\sfrac{1}{16}$. The resulting ratios of points where monostationarity is enabled relative to the total number of points in the semialgebraic set defined by the inequalities $a(\Vector{\eta})>0$ and $b(\Vector{\eta})<0$ are depicted in Figure \ref{fig:simplicial-cover-homotopies}. We do not sample from all possible three-element subsets, since we expect that the optimal cover contains $\mathcal{CC}(15)$, which performed significantly better than all other covers. The results of this experiment are visualized in \Cref{fig:simplicial-cover-homotopies}.

\begin{figure}[h!]
    \centering
    \includegraphics[width=0.635\linewidth]{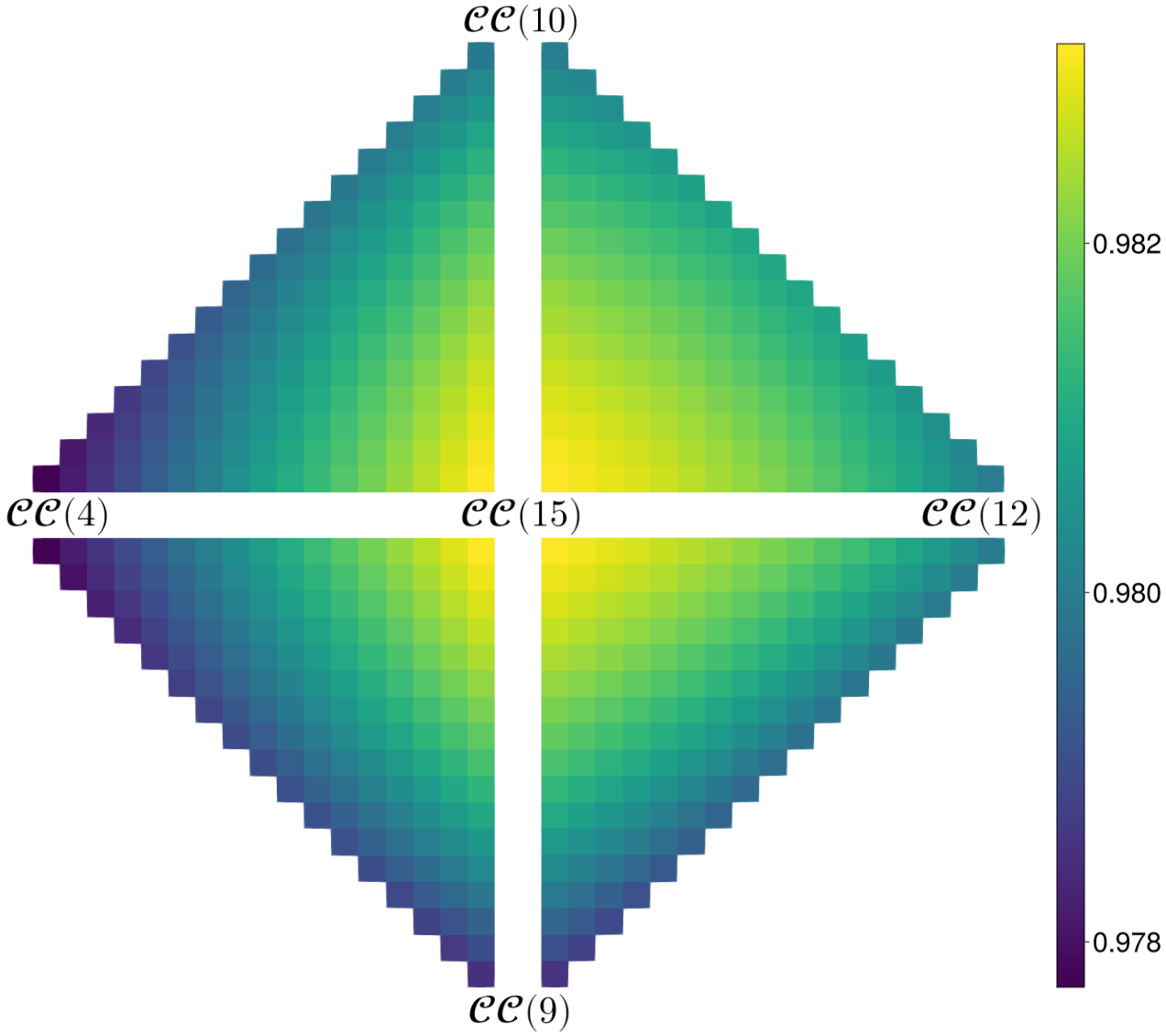}
    \caption{Using the simplicial homotopy $H_{a,b,c}(s,t)$ we compute the number of sampled points where monostationarity is enabled divided by the total number of points sampled from the region $a(\Vector{\eta})>0$ and $b(\Vector{\eta})<0$ (see Section \ref{section:empirical data}). For this experiment, we set $c=15$ and choose $a,b\in \{4,9,10,12\}$. The simplices are discretized with a grid size of $\Delta s=\Delta t=\sfrac{1}{16}$. The best cover $\mathcal{CC}(15)$ at $s=t=0$ is located in the image's center. By increasing the value of $s$ and $t$, we calculate the remaining ratios. 
    }
    \label{fig:simplicial-cover-homotopies}
\end{figure}

These experiments recover the peak between cover $\mathcal{CC}(4)$ and $\mathcal{CC}(9)$ and the linear homotopy $H_{10,12}(t)$ with endpoints $\mathcal{CC}(10)$ and $\mathcal{CC}(12)$ reveals another improved weighted cover. Despite involving three maximally distinct covers in the weighting (cf. Section \ref{section:containment}), cover $\mathcal{CC}(15)$ performs significantly better than all investigated weighted covers. This observation verifies a result from Table \ref{tab:objectivecomparison}, where the region captured by combining all covers is only $0.18\%$ larger than the region of monostationarity characterized by $\mathcal{CC}(15)$. 
Furthermore, Figure \ref{fig:simplicial-cover-homotopies} suggests that it is not sufficient to use three equally weighted pure covers. This is in opposition to our assumptions from Example \ref{ex:homotopy-from-cover-4-to-12}, where the weights for all vertices are equal. 
Reasonably discretizing the search space resulting from using all covers and non-equal weights is a challenging task with diminishing returns.

\section{Conclusion}
In this article, we highlight the connection between the nonnegativity of polynomials and the monostationarity of dual phosphorylation. The corresponding steady state variety enables the use of techniques from real algebraic geometry. By implementing an empirical approach to sample the regions of nonnegativity, we identify and compare the 16 pure covers corresponding to the hexagonal face where the monostationarity of the dual phosphorylation network is not yet completely characterized. In doing so, we establish the existence of pure covers that provide new sufficient conditions for monostationarity in \Cref{cor:newsufficientconditions}, improving the previous work by Feliu et al. \cite{DualPhosphorSONC}. This approach provides a general strategy for investigating the monostationarity of CRNs with symbolic reaction rate constants. 
In the specific case of dual phosphorylation, we can now simultaneously consider the sums of circuit numbers of all 16 pure covers to generate a much stronger sufficient condition on the network's monostationarity than previously available.

Next to the classification of all possible pure covers in \Cref{prop:16-circuit-covers}, we consider weighted covers that combine multiple pure covers by weighting the coefficients. However, it was computationally infeasible to find the optimal cover completely determining the region of monostationarity. In future work, the focus should thus lie on an in-depth investigation of weighted covers for completely classifying the region of monostationarity associated to the dual phosphorylation reaction network. This goes hand in hand with furthering the development of nonnegativity certificates for polynomials with symbolic coefficients.

As demonstrated in \Cref{prop:16-circuit-covers}, the circuit cover $\mathcal{CC}(9)$ proposed by Feliu et al. \cite{DualPhosphorSONC} is not unique. This specific cover was chosen because it uses a minimal number of circuit polynomials and admits a simple expression, since the negative point $\Vector{m}$ lies in the barycenter of the Newton polytopes of all involved circuit polynomials (cf. \cite[Remark 3.6]{DualPhosphorSONC}). However, we demonstrate in this article that $\mathcal{CC}(15)$ corresponds to a larger region of monostationarity, even though it consists of the maximal possible number of circuit polynomials. Furthermore, none of the covers $\mathcal{CC}(4)$, $\mathcal{CC}(10)$, $\mathcal{CC}(12)$ and $\mathcal{CC}(15)$ that perform well in our experiments have $\Vector{m}$ as the barycenter of all of the Newton polytopes associated with the circuit polynomials. Nonetheless, we discuss in \Cref{section:new-sufficient-conditions} that these covers admit certain symmetries. Deriving general conditions on the selection of circuit covers based on such combinatorial observations should be the subject of future research.

\bibliography{sn-bibliography}
\vspace*{6mm}
\begin{appendices}

\section{Table: Relative Comparison}\label{section:appendix}
\vspace*{-3mm}
\begin{table}[h!]
    \centering
        \caption{\justifying We drew roughly $5.2\cdot 10^7$ samples from the hypercubical region in $\mathbb{R}_{+}^{12}$ described by $a(\Vector{\eta})> 0$ and $b(\Vector{\eta})<0$. We label the times that our model performed better than cover $\mathcal{CC}(9)$ with ``$+$'', the times $\mathcal{CC}(9)$ performs better with a ``$-$'' and the samples where neither model was able to deduce monostationarity is recorded in the column ``0''. Each of these numbers is subsequently divided by the total amount of samples. All of these ratios are multiplied by $10^4$ and then rounded to two decimal digits for the sake of brevity. Every cover is compared with cover $\mathcal{CC}(9)$ (blue row) that was used in Feliu et al. \cite{DualPhosphorSONC}. For that reason, we mark the comparative ``$+$'' and ``$-$'' columns corresponding to $\mathcal{CC}(9)$ with an ``x''.}
    \label{tab:relative+-comparison}
 
    \bgroup
\def\arraystretch{1.8}
\setlength\tabcolsep{1.81ex}
    \begin{tabular}{| c " c c c | c c c | c c c | c c c|}

 \multicolumn{1}{c"}{~}& \multicolumn{3}{c|}{$[0,0.1]^{12}$}&\multicolumn{3}{c|}{$[0,1]^{12}$}& \multicolumn{3}{c|}{$[0,10]^{12}$}& \multicolumn{3}{c|}{$[0,100]^{12}$}
\\[.5mm] 

\hline
\#&+&$-$&0&+&$-$&0&+&$-$&0&+&$-$&0\\ \thickhline

$\mathcal{CC}(1)$&0.0&2.91&2.15&0.0&2.91&2.15&0.0&2.92&2.15&0.0&2.92&2.15\\ \hline

$\mathcal{CC}(2)$&0.22&1.83&1.93&0.22&1.84&1.93&0.22&1.84&1.93&0.22&1.84&1.92\\ \hline

$\mathcal{CC}(3)$&0.03&1.37&2.12&0.03&1.37&2.12&0.03&1.37&2.12&0.03&1.37&2.11\\ \hline

\cellcolor{celltwo!23}$\mathcal{CC}(4)$&\cellcolor{celltwo!23}0.43&\cellcolor{celltwo!23}0.50&\cellcolor{celltwo!23}1.72&\cellcolor{celltwo!23}0.43&\cellcolor{celltwo!23}0.50&\cellcolor{celltwo!23}1.72&\cellcolor{celltwo!23}0.43&\cellcolor{celltwo!23}0.50&\cellcolor{celltwo!23}1.72&\cellcolor{celltwo!23}0.43&\cellcolor{celltwo!23}0.50&\cellcolor{celltwo!23}1.72\\ \hline

$\mathcal{CC}(5)$&0.22&1.84&1.93&0.22&1.83&1.93&0.22&1.84&1.93&0.22&1.83&1.92\\ \hline

$\mathcal{CC}(6)$&0.0&1.32&2.15&0.0&1.32&2.15&0.0&1.32&2.15&0.0&1.32&2.15\\ \hline

$\mathcal{CC}(7)$&0.0&1.32&2.15&0.0&1.32&2.15&0.0&1.32&2.15&0.0&1.32&2.15\\ \hline

$\mathcal{CC}(8)$&0.01&0.69&2.14&0.01&0.69&2.14&0.01&0.69&2.14&0.01&0.69&2.14\\ \hline

\cellcolor{cellthree!23}$\mathcal{CC}(9)$&\cellcolor{cellthree!23}x&\cellcolor{cellthree!23}x&\cellcolor{cellthree!23}2.15&\cellcolor{cellthree!23}x&\cellcolor{cellthree!23}x&\cellcolor{cellthree!23}2.15&\cellcolor{cellthree!23}x&\cellcolor{cellthree!23}x&\cellcolor{cellthree!23}2.15&\cellcolor{cellthree!23}x&\cellcolor{cellthree!23}x&\cellcolor{cellthree!23}2.15\\ \hline

\cellcolor{celltwo!23}$\mathcal{CC}(10)$&\cellcolor{celltwo!23}0.34&\cellcolor{celltwo!23}0.19&\cellcolor{celltwo!23}1.81&\cellcolor{celltwo!23}0.34&\cellcolor{celltwo!23}0.19&\cellcolor{celltwo!23}1.81&\cellcolor{celltwo!23}0.34&\cellcolor{celltwo!23}0.19&\cellcolor{celltwo!23}1.81&\cellcolor{celltwo!23}0.34&\cellcolor{celltwo!23}0.19&\cellcolor{celltwo!23}1.81\\ \hline

$\mathcal{CC}(11)$&0.22&1.84&1.93&0.22&1.83&1.93&0.22&1.84&1.93&0.22&1.83&1.92\\ \hline

\cellcolor{celltwo!23}$\mathcal{CC}(12)$&\cellcolor{celltwo!23}0.34&\cellcolor{celltwo!23}0.19&\cellcolor{celltwo!23}1.81&\cellcolor{celltwo!23}0.34&\cellcolor{celltwo!23}0.19&\cellcolor{celltwo!23}1.81&\cellcolor{celltwo!23}0.34&\cellcolor{celltwo!23}0.20&\cellcolor{celltwo!23}1.81&\cellcolor{celltwo!23}0.34&\cellcolor{celltwo!23}0.20&\cellcolor{celltwo!23}1.81\\ \hline

$\mathcal{CC}(13)$&0.22&1.83&1.93&0.22&1.84&1.93&0.22&1.84&1.93&0.22&1.84&1.92\\ \hline

$\mathcal{CC}(14)$&0.0&2.91&2.15&0.0&2.91&2.15&0.0&2.92&2.15&0.0&2.92&2.15\\ \hline

\cellcolor{celltwo!23}$\mathcal{CC}(15)$&\cellcolor{celltwo!23}0.59&\cellcolor{celltwo!23}0.13&\cellcolor{celltwo!23}1.56&\cellcolor{celltwo!23}0.59&\cellcolor{celltwo!23}0.13&\cellcolor{celltwo!23}1.56&\cellcolor{celltwo!23}0.59&\cellcolor{celltwo!23}0.13&\cellcolor{celltwo!23}1.56&\cellcolor{celltwo!23}0.59&\cellcolor{celltwo!23}0.13&\cellcolor{celltwo!23}1.56\\ \hline
$\mathcal{CC}(16)$&0.05&0.58&2.10&0.05&0.58&2.10&0.05&0.57&2.10&0.05&0.58&2.10\\ \hline

    \end{tabular}
    \egroup
\end{table}

\end{appendices}

\end{document}